\title{The Moonshine Module for Conway's Group\footnote{2010 {\em Mathematics Subject Classification:} 11F11, 11F22, 17B69, 20C34}}
\author{John F. R. Duncan\footnote{Email: \texttt{john.duncan@case.edu}} }
\author{Sander Mack-Crane\footnote{Email: \texttt{mack-crane@case.edu}}}
\affil{Department of Mathematics, Applied Mathematics and Statistics,\\ 
Case Western Reserve University, Cleveland, OH 44106, U.S.A.}
\date{2014 September 26}
\newcommand{\comment}[1]{}
\newcommand{\wh}[1]{\widehat{#1}}
\newcommand{\tw}{\mathrm{tw}}
\renewcommand{\a}{\mathfrak{a}}
\newcommand{\g}{\mathfrak{g}}
\renewcommand{\aa}{\hat{\mathfrak{a}}}
\newcommand{\CC}{\mathbb{C}}
\newcommand{\HH}{\mathbb{H}}
\newcommand{\LL}{\Lambda}
\newcommand{\GG}{\mathcal{G}}
\newcommand{\MM}{\mathbb{M}}
\newcommand{\QQ}{\mathbb{Q}}
\newcommand{\RR}{\mathbb{R}}
\newcommand{\vn}{V^{\natural}}
\newcommand{\vsn}{V^{s\natural}}
\newcommand{\vsnt}{V^{s\natural}_\tw}
\newcommand{\vfn}{V^{f\natural}}
\newcommand{\vfnt}{V^{f\natural}_\tw}
\newcommand{\Lo}{L(0)}
\newcommand{\Lm}{L(m)}
\newcommand{\Ln}{L(n)}
\newcommand{\vv}{\mathbf{v}}
\newcommand{\zz}{\mathfrak{z}}
\newcommand{\w}{\omega}
\newcommand{\ZZ}{\mathbb{Z}}
\newcommand{\ii}{\textbf{i}}
\newcommand{\up}{\bigtriangleup}
\newcommand{\dn}{\bigtriangledown}
\newcommand{\Aut}{\operatorname{Aut}}
\newcommand{\be}{\begin{equation}}
\newcommand{\Cliff}{\operatorname{Cliff}}
\newcommand{\Co}{\textsl{Co}}
\newcommand{\coa}{G}	
\newcommand{\coh}{\widehat{\coa}} 
\newcommand{\CM}{\operatorname{CM}}
\newcommand{\ee}{\end{equation}}
\newcommand{\End}{\operatorname{End}}
\newcommand{\Id}{\operatorname{Id}}
\newcommand{\tr}{\operatorname{tr}}
\newcommand{\str}{\operatorname{str}}
\newcommand{\lab}{\langle}
\newcommand{\rab}{\rangle}
\newcommand{\ldp}{((}
\newcommand{\rdp}{))}
\newcommand{\Spin}{\operatorname{Spin}}
\newcommand{\SO}{\operatorname{SO}}
\newcommand{\SL}{\operatorname{SL}}
\newtheorem{theorem}{Theorem}[section]
\newtheorem{proposition}[theorem]{Proposition}
\newtheorem{lemma}[theorem]{Lemma}
\newtheorem*{sthm}{Theorem}
\numberwithin{equation}{section}
\begin{document}

\maketitle

\begin{abstract}
We exhibit an action of Conway's group---the automorphism group of the Leech lattice---on a distinguished super vertex operator algebra, and we prove that the associated graded trace functions are normalized principal moduli, all having vanishing constant terms in their Fourier expansion. Thus we construct the natural analogue of the Frenkel--Lepowsky--Meurman moonshine module for Conway's group. 

The super vertex operator algebra we consider admits a natural characterization, in direct analogy with that conjectured to hold for the moonshine module vertex operator algebra. It also admits a unique canonically-twisted module, and the action of the Conway group naturally extends. We prove a special case of generalized moonshine for the Conway group, by showing that the graded trace functions arising from its action 
on the canonically-twisted module are constant in the case of Leech lattice automorphisms with fixed points, and are principal moduli for genus zero groups otherwise.
\end{abstract}

\clearpage

\tableofcontents

\section{Introduction}

Taking the upper-half plane $\HH:=\{\tau\in\CC\mid \Im(\tau)>0\}$, together with the { Poincar\'e metric} $ds^2=y^{-2}(dx^2+dy^2)$, we obtain the Poincar\'e half-plane model of the hyperbolic plane. The group of orientation preserving isometries is the quotient of $\SL_2(\RR)$ by $\{\pm I\}$, where the action is by M\"obius transformations,
\begin{gather}
	\begin{pmatrix}
	a&b\\
	c&d
	\end{pmatrix}
	\cdot \tau
	:=\frac{a\tau+b}{c\tau+d}.
\end{gather}
To any $\tau\in\HH$ we may associate a complex elliptic curve $E_{\tau}:=\CC/(\ZZ\tau+\ZZ)$, and from this point of view the modular group $\SL_2(\ZZ)$ is distinguished, as the subgroup of $\SL_2(\RR)$ whose orbits encode the isomorphism types of the curves $E_{\tau}$. That is, $E_{\tau}$ and $E_{\tau'}$ are isomorphic if and only if $\tau'=\gamma\cdot\tau$ for some $\gamma\in\SL_2(\ZZ)$. 

Subgroups of $\SL_2(\RR)$ that are commensurable with the modular group admit similar interpretations. For example, the orbits of the {Hecke congruence group} of level $N$,
\begin{gather}
	\Gamma_0(N):=
	\left\{
	\begin{pmatrix}
	a&b\\c&d
	\end{pmatrix}
	\in\SL_2(\ZZ)
	\mid c=0\pmod{N}
	\right\},
\end{gather}
correspond to isomorphism types of pairs $(E_{\tau},C)$, where $C$ is a cyclic subgroup of $E_{\tau}$ of order $N$. (Cf. e.g. \cite{MR1193029}.)

\subsection{Monstrous Moonshine}\label{sec:intro:monm}

It is a remarkable fact, aspects of which remain mysterious, that certain discrete subgroups of $\SL_2(\RR)$ in the same commensurability class as the modular group---which is to say, fairly uncomplicated groups---encode detailed knowledge of the representation theory of the largest sporadic simple group, the monster, $\MM$. Given that the monster has 
\begin{gather}
808017424794512875886459904961710757005754368000000000
\end{gather} 
elements, no non-trivial permutation representations with degree less than 
\begin{gather}
97239461142009186000
\end{gather}
and no non-trivial linear representations\footnote{Here, representation means ordinary representation, but even over a field of positive characteristic, the minimal dimension of a non-trivial representation is $196882$ (cf. \cite{MR1660407,MR1956140}).}  with dimension less than $196883$ (cf. \cite{MR0399248,MR671653} or \cite{atlas}), 
this is surprising.

The explanation of this fact relies upon the existence of a graded, infinite-dimensional representation $\vn=\bigoplus_{n\geq 0}\vn_n$ of $\MM$, such that if we define the {\em McKay--Thompson series} 
\begin{gather}\label{eqn:intro-Tm}
	T_m(\tau):=q^{-1}\sum_{n\geq 0}{\rm tr}_{\vn_n}m\, q^n
\end{gather}
for $m\in\MM$, where $q:=e^{2\pi \ii \tau}$ for $\tau\in\HH$, then the functions $T_m$ are characterized in the following way.
\begin{quote}
For each $m\in \MM$ there is a discrete group $\Gamma_m<\SL_2(\RR)$, commensurable with the modular group and having width\footnote{Say that a discrete group $\Gamma<\SL_2(\RR)$ has {\em width one at the infinite cusp} if the subgroup of upper-triangular matrices in $\Gamma$ is generated by $\pm\left(\begin{smallmatrix}1&1\\0&1\end{smallmatrix}\right)$.} one at the infinite cusp, such that $T_m$ is the unique $\Gamma_m$-invariant holomorphic function on $\HH$ satisfying $T_m(\tau)=q^{-1}+O(q)$ as $\Im(\tau)\to\infty$, and remaining bounded as $\tau$ approaches any non-infinite cusp\footnote{If $\Gamma<\SL_2(\RR)$ is commensurable with $\SL_2(\ZZ)$ then it acts naturally on $\wh{\QQ}=\QQ\cup\{\infty\}$. We define the {\em cusps} of $\Gamma$ to be the orbits of $\Gamma$ on $\wh{\QQ}$. Say that a cusp $\alpha\in\Gamma\backslash\wh{\QQ}$ is {\em non-infinite} if it does not contain $\infty$.} of $\Gamma_m$.
\end{quote}

The group $\Gamma_m$ is $\SL_2(\ZZ)$ in the case that $m=e$ is the identity element. The corresponding McKay--Thompson series $T_e$, which is the graded dimension of $\vn$ by definition, must therefore be the {\em normalized elliptic modular invariant}, 
\begin{gather}\label{eqn:intro-J}
J(\tau):=\frac{\left(1+240\sum_{n>0}\sum_{d|n}d^3q^n\right)^3}{q\prod_{n>0}(1-q^n)^{24}}-744,
\end{gather}
since this is the unique $\SL_2(\ZZ)$-invariant holomorphic function on $\HH$ satisfying $J(\tau)=q^{-1}+O(q)$ as $\Im(\tau)\to\infty$.
\begin{gather}\label{eqn:intro-vngdim}
T_e(\tau)=
q^{-1}\sum_{n\geq 0}\dim \vn_n q^{n}
	=J(\tau)
	=q^{-1}+196884q+21493760q^2+\ldots
\end{gather}
Various groups $\Gamma_0(N)$ occur as $\Gamma_m$ for elements $m\in \MM$. For example, there are two conjugacy classes of involutions in $\MM$. If $m$ belongs to the larger of these conjugacy classes, denoted $2B$ in \cite{atlas}, then $\Gamma_{2B}:=\Gamma_{m}=\Gamma_0(2)$, and 
\begin{gather}\label{eqn:intro-T2B}
	T_{2B}(\tau)
	=q^{-1}\prod_{n>0}(1-q^{2n-1})^{24}+24
	=q^{-1}+276q-2048q^2+\ldots
\end{gather}

The existence of the representation $\vn$ was conjectured by Thompson \cite{Tho_NmrlgyMonsEllModFn} following McKay's observation that $196884=1+196883$, where the significance of $196884$ is clear from (\ref{eqn:intro-vngdim}), and the significance of $1$ and $196883$ is that they are dimensions of irreducible representations of the monster. It is worth noting that at the time of McKay's observation, the monster group had not yet been proven to exist. So the monster, and therefore also its representation theory, was conjectural. The existence of an irreducible representation with dimension $196883$ was conjectured independently by Griess \cite{MR0399248} and Conway--Norton \cite{MR554399}, and the existence of the monster was ultimately proven by Griess \cite{MR671653}, via an explicit, tour de force construction of a monster-invariant (commutative but non-associative) algebra structure on the unique non-trivial $196884$-dimensional representation.

Thompson's conjecture was first confirmed, in an indirect fashion, by Atkin--Fong--Smith \cite{MR822245}, but was subsequently established in a strong sense by Frenkel--Lepowsky--Meurman \cite{FLMPNAS,FLMBerk,FLM}, who furnished a concrete construction of the {\em moonshine module} $\vn$ (cf. (\ref{eqn:intro-Tm})), together with rich, monster-invariant algebraic structure, constituting an infinite-dimensional extension of (a slight modification of) Griess' 196884-dimensional algebra. More particularly, Frenkel--Lepowsky--Meurman equipped $\vn$ with vertex operators, which had appeared originally in the dual-resonance theory of mathematical physics (cf. \cite{Sch_DulRsnThy,Man_DulRsnMdl} for reviews), and had subsequently found application (cf. \cite{MR0573075,FreKac_AffLieDualRes}) in the representation theory of affine Lie algebras.

Borcherds generalized the known constructions of vertex operators, and also derived rules for composing them in \cite{Bor_PNAS}. Using these rules he was able to demonstrate that $\vn$, together with examples arising from certain infinite-dimensional Lie algebras, admits a kind of commutative associative algebra structure---namely, {\em vertex algebra} structure---which we review in \S 2 (see [43] for a more thorough introduction). Frenkel--Lepowsky--Meurman used the fact that $\vn$ supports a representation of the Virasoro algebra to formulate the notion of {\em vertex operator algebra} in \cite{FLM}, and showed that $\MM$ is precisely the group of automorphisms of the vertex algebra structure on $\vn$ that commute with the Virasoro action. Vertex operator algebras were subsequently recognized to be ``chiral halves'' of two-dimensional conformal field theories (cf. \cite{Gaberdiel:1999mc,Gaberdiel:2005sk}), and the construction of $\vn$ by Frenkel--Lepowsky--Meurman counts as one of the earliest examples of an orbifold conformal field theory (cf. \cite{MR818423,MR851703,MR968697}).

The characterization of the McKay--Thompson series $T_m$ quoted above is the main content of the {\em monstrous moonshine conjectures}, formulated by Conway--Norton in \cite{MR554399} (see also \cite{Tho_FinGpsModFns}), and solved by Borcherds in \cite{borcherds_monstrous}. It is often referred to as the {\em genus zero property} of monstrous moonshine, because the existence of a function $T_m$ satisfying the given conditions implies that $\Gamma_m$ has {\em genus zero}, in the sense that the orbit space $\Gamma_m\backslash \wh{\HH}$ is isomorphic as a Riemann surface\footnote{Cf. e.g. \cite{Shi_IntThyAutFns} for the Riemann surface structure on $\Gamma\backslash\wh{\HH}$.} to the Riemann sphere $\wh{\CC}=\CC\cup\{\infty\}$, where $\wh{\HH}:=\HH\cup\QQ\cup\{\infty\}$. Indeed, the function $T_m$ witnesses this, as it induces an embedding $\Gamma_m\backslash\HH\to\CC$ which extends uniquely to an isomorphism $\Gamma_m\backslash\wh{\HH}\to \wh{\CC}$.

Conversely, if $\Gamma<\SL_2(\RR)$ has genus zero in the above sense, then there is an isomorphism of Riemann surfaces $\Gamma\backslash\wh{\HH}\to \wh{\CC}$, and the composition $\HH\to\wh{\HH}\to\Gamma\backslash\wh{\HH}\to \wh{\CC}$ maps $\HH$ to $\CC$, thereby defining a $\Gamma$-invariant holomorphic function $T_\Gamma$ on $\HH$. Call a $\Gamma$-invariant holomorphic function $T_\Gamma:\HH\to \CC$ a {\em principal modulus} for $\Gamma$, if it arises in this way from an isomorphism $\Gamma\backslash\widehat{\HH}\to\widehat{\CC}$. If $\Gamma$ has width one at the infinite cusp, then, after applying an automorphism of $\wh{\CC}$ if necessary, we have 
\begin{gather}\label{eqn:intro-pmcrit}
T_\Gamma(\tau)=q^{-1}+ O(q)
\end{gather} 
as $\Im(\tau)\to\infty$, and no poles at any non-infinite cusps of $\Gamma$. Such a function $T_\Gamma$---call it a {\em normalized principal modulus} for $\Gamma$---is unique because the difference between any two defines a holomorphic function on the compact Riemann surface $\Gamma\backslash\widehat{\HH}$ which vanishes at the infinite cusp by force of (\ref{eqn:intro-pmcrit}). The only holomorphic functions on a compact Riemann surface are constants (cf. e.g. \cite{MR1013999}) and hence this difference vanishes identically.

So knowledge of the McKay--Thompson series $T_m$ is equivalent to knowledge of the discrete groups $\Gamma_m<\SL_2(\RR)$, according to the characterization furnished by monstrous moonshine. This explains the claim we made above, that subgroups of $\SL_2(\RR)$ ``know'' about the representation theory of the monster. For, according to the definition (\ref{eqn:intro-Tm}) of $T_m$, we can compute the graded trace of a monster element $m\in \MM$ on the moonshine module $\vn$, as soon as we know the group $\Gamma_m$. In particular, we can compute the traces of monster elements on its infinite-dimensional representation $\vn$, without doing any computations in the monster itself.

As has been mentioned above, a concrete realization of the the McKay--Thompson series $T_m$ is furnished by the Frenkel--Lepowsky--Meurman construction of the moonshine module $\vn$. Their method was inspired in part by the original construction of the monster due to Griess \cite{MR605419,MR671653}, and takes Leech's lattice \cite{Lee_SphPkgHgrSpc,Lee_SphPkgs} as a starting point. Conway has proven \cite{Con_ChrLeeLat} that the Leech lattice $\LL$ is the unique up to isomorphism even positive-definite lattice such that
\begin{itemize}
\item the rank of $\LL$ is $24$, 
\item $\LL$ is self-dual, and
\item $\lab \lambda,\lambda\rab\neq 2$ for any $\lambda\in \LL$.
\end{itemize}
Conway also studied the automorphism group of $\LL$ and discovered three new sporadic simple groups in the process \cite{MR0237634,MR0248216}. We set $\Co_0:=\Aut(\LL)$ and call it Conway's group. The largest of Conway's sporadic simple groups is the quotient 
\begin{gather}\label{eqn:intro-Co1}
\Co_1:=\Aut(\LL)/\{\pm \Id\}
\end{gather} 
of $\Co_0$ by its center.

\subsection{Conway Moonshine}\label{sec:intro:conm}

In their paper \cite{MR554399}, Conway--Norton also described an assignment of genus zero groups $\Gamma_g<\SL_2(\RR)$, to elements $g$ of the Conway group, $\Co_0$. Their prescription is very concrete and may be described as follows. If $g\in \Co_0=\Aut(\LL)$ acts on $\LL\otimes_{\ZZ}\CC$ with eigenvalues $\{\varepsilon_i\}_{i=1}^{24}$, then $\Gamma_g$ is the invariance group of the holomorphic function
\begin{gather}\label{eqn:intro-defntg}
	t_g(\tau):=q^{-1}\prod_{n>0}\prod_{i=1}^{24}\left(1-\varepsilon_iq^{2n-1}\right)
\end{gather}
on $\HH$.
Observe that for $g=e$ the identity element of $\Co_0$, the function $t_e$ almost coincides with the monstrous McKay--Thompson series $T_{2B}$ of (\ref{eqn:intro-T2B}): the latter has vanishing constant term, whereas $t_e(\tau)=q^{-1}-24+O(q)$. 
That the invariance groups of the $t_g$ actually are genus zero subgroups of $\SL_2(\RR)$, and that the $t_g$ are principal moduli, was verified in part in \cite{MR554399}, and in full in \cite{MR628715}. (One may see \cite{MR780666} or Table \ref{tab:mmdata-Tsg} in \S\ref{sec:mmdata} of this paper for an explicit description of all the groups $\Gamma_g$ for $g\in \Co_0$.) 

So discrete subgroups of isometries of the hyperbolic plane also know a lot about the representation theory of Conway's group $\Co_0$, but to fully justify this statement we should construct the appropriate analogue of $\vn$; that is, a graded infinite-dimensional $\Co_0$-module whose graded trace functions recover the $t_g$.

Note at this point that the functions $t_g$ are not distinguished to quite the extent that the monstrous McKay--Thompson series $T_m$ are, for $m\in \MM$, because for $g\in \Co_0$ we have
\begin{gather}
	t_g(\tau)=q^{-1}-\chi_g+O(q),
\end{gather}
where $\chi_g=\sum_{i=1}^{24} \varepsilon_i$ is the trace of $g$ attached to its action on $\LL\otimes_{\ZZ}\CC$. In particular, $t_g$ generally has a non-vanishing constant term, and does not satisfy the criterion (\ref{eqn:intro-pmcrit}) defining normalized principal moduli. So for $g\in \Co_0$ let us define
\begin{gather}\label{eqn:intro-defnTg}
	T^s_g(\tau):=t_g(\tau/2)+\chi_g=q^{-1/2}\prod_{n>0}\prod_{i=1}^{24}\left(1-\varepsilon_iq^{n-1/2}\right)+\chi_g,
\end{gather}
so that $T^s_g(2\tau)$ is the unique normalized principal modulus attached to the genus zero group $\Gamma_g$.

The purpose of this article is to construct a $\frac12\ZZ$-graded infinite-dimensional $\Co_0$-module, $\vsn=\bigoplus_{n\geq 0} \vsn_{n/2}$, 
such that the normalized principal moduli $T^s_g$, for $g\in \Co_0$, are obtained via an analogue of (\ref{eqn:intro-Tm}). That is, we will construct the natural analogue of the moonshine module $\vn$ for the Conway group $\Co_0$.

In fact we will do better than this, by establishing a characterization of the algebraic structure underlying $\vsn$. 
In \cite{FLM} it is conjectured that $\vn$ is the unique vertex operator algebra such that 
\begin{itemize}
\item the central charge of $\vn$ is $24$, 
\item $\vn$ is self-dual, and
\item $\deg(v)\neq 1$ for any non-zero $v\in \vn$.
\end{itemize}
Just as vertex operator algebra structure is a crucial feature of the moonshine module $\vn$, super vertex operator algebra furnishes the correct framework for understanding $\vsn$ (and also motivates the rescaling of $\tau$ in (\ref{eqn:intro-defnTg})).
In \S\ref{sec:moon:uniq} we prove the following result.
\begin{sthm}[{\bf \ref{thm:moon:uniq-uniq}}]
There is a unique up to isomorphism $C_2$-cofinite rational super vertex operator algebra of CFT type $\vsn$ such that
\begin{itemize}
\item the central charge of $\vsn$ is $12$, 
\item $\vsn$ is self-dual, and
\item $\deg(v)\neq \frac12$ for any non-zero $v\in \vsn$.
\end{itemize}
\end{sthm}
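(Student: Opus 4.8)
Existence holds by construction: $\vsn$ is produced from a rank-$24$ free fermion system by a $\ZZ/2$ orbifold. Recall the procedure. Let $\a$ be a $24$-dimensional complex vector space with a nondegenerate symmetric bilinear form and let $\vfn$ be the associated free fermion (Clifford module) super vertex operator algebra: it has central charge $12=\tfrac12\dim\a$, is $C_2$-cofinite and rational, is of CFT type, is self-dual, and is generated by its weight-$\tfrac12$ subspace, which is a copy of $\a$. Write $(\vfn)^{\bar0}$ for the fixed points of the canonical parity involution of $\vfn$. Then $\vfn$ has a unique canonically-twisted module $\vfnt$, which also carries a parity involution; normalize its sign so that the minimal-weight subspace of $\vfnt$, which has weight $\tfrac32$, is parity-even, and set $\vsn:=(\vfn)^{\bar0}\oplus(\vfnt)^{\bar1}$. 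This is a $C_2$-cofinite rational super vertex operator algebra of CFT type with central charge $12$; it is self-dual, being the simple-current extension of $(\vfn)^{\bar0}$ by the order-two simple current $(\vfnt)^{\bar1}$; and since $(\vfn)^{\bar0}$ is $\ZZ_{\geq0}$-graded while $(\vfnt)^{\bar1}$ is concentrated in weights $\geq\tfrac32$, we have $\vsn_{1/2}=0$.

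\textbf{Uniqueness: reduction to a character computation.} Let $V$ now be an arbitrary $C_2$-cofinite rational super vertex operator algebra of CFT type with central charge $12$, self-dual, and with $V_{1/2}=0$; we must exhibit an isomorphism $V\cong\vsn$. By rationality and $C_2$-cofiniteness together with the super analogue of Zhu's modular invariance theorem, $V$ has only finitely many irreducible untwisted and canonically-twisted modules, and the graded (super)traces of $V$ and of its modules---taken with and without an insertion of the parity operator---span a finite-dimensional $\SL_2(\ZZ)$-representation (acting through a congruence subgroup). Self-duality says that $V$ is its own contragredient and is the unique irreducible untwisted $V$-module; arguing similarly, $V$ has a unique irreducible canonically-twisted module $V_\tw$. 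So only a short, fixed list of characters is at issue. Putting $\chi_V(\tau):=\tr_V q^{\Lo-c/24}=\tr_V q^{\Lo-1/2}$, the hypotheses give $\chi_V(\tau)=q^{-1/2}\bigl(1+(\dim V_{1/2})\,q^{1/2}+(\dim V_1)\,q+\cdots\bigr)$, the $q^0$ coefficient vanishing because $V_{1/2}=0$. Its transformation law realizes $\chi_V$ as a component of a finite-dimensional weight-zero vector-valued modular form for $\SL_2(\ZZ)$, holomorphic on $\HH$ and with the prescribed pole (of order $\tfrac12$ in $q$) only at the infinite cusp; such forms compatible with the modular data span a finite-dimensional space, explicitly built from quotients of theta functions and $\eta$, and the two normalizations $\dim V_0=1$, $\dim V_{1/2}=0$ pin $\chi_V$ down uniquely. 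It therefore coincides with the character of $\vsn$; in particular $\dim V_1=276$, and the same reasoning fixes the character of $V_\tw$.

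\textbf{Uniqueness: recovering the free fermion structure.} The even part $V^{\bar0}$ is a self-dual $C_2$-cofinite rational vertex operator algebra of CFT type with central charge $12$, and its weight-one space $V_1=V^{\bar0}_1$ is a Lie algebra under the bracket $u\mapsto u_0$, carrying the invariant symmetric form induced from the vertex operator algebra pairing; the modes of $V_1$ generate an affine current sub-vertex operator algebra whose level is determined by that form. Since $\dim V_1=276$ and $V^{\bar0}$ is self-dual of central charge $12$, the candidates for this (necessarily reductive) Lie algebra collapse to one: $V_1\cong\mathfrak{so}_{24}$ at level one, so the sub-vertex operator algebra it generates is isomorphic to $(\vfn)^{\bar0}$---equivalently, the level-one $\widehat{\mathfrak{so}}_{24}$ vertex operator algebra, itself $C_2$-cofinite, rational, self-dual of central charge $12$. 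As a self-dual vertex operator algebra of central charge $12$ contains no proper self-dual sub-vertex operator algebra of central charge $12$ (the coset would have central charge $0$), we conclude $V^{\bar0}\cong(\vfn)^{\bar0}$. (Alternatively one recovers $\a$, hence $\vfn$, directly from the minimal-weight subspace of $V_\tw$, which is a module for $\Cliff(\a)$.) Finally $V=V^{\bar0}\oplus V^{\bar1}$ is an extension of $(\vfn)^{\bar0}$ by the order-two simple current $V^{\bar1}$; the irreducible $(\vfn)^{\bar0}$-modules, and the super vertex operator algebra extensions they generate, are governed by the $\ZZ/2$-orbifold fusion rules of the free fermion system, the only such extensions being $\vfn$ itself and $\vsn$, distinguished precisely by whether the adjoined sector contributes a weight-$\tfrac12$ vector. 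Since $V_{1/2}=0$, we get $V\cong\vsn$.

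\textbf{Expected difficulty.} The crux is the structural step above: deducing from $\dim V_1=276$ that $V_1\cong\mathfrak{so}_{24}$ at level one, hence $V^{\bar0}\cong(\vfn)^{\bar0}$, and then classifying the simple-current extensions. This is the super, central-charge-$12$ analogue of the step that remains open in the Frenkel--Lepowsky--Meurman uniqueness conjecture for the moonshine module; what makes it feasible here is that central charge $12$ and the superalgebra grading together leave essentially no room for alternative Lie algebras. A secondary technical point is setting modular invariance up cleanly in the twisted and super settings and checking that only finitely many characters must be tracked.
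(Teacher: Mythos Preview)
Your overall strategy matches the paper's: reduce to identifying the even part $V^{\bar 0}$ as the $D_{12}$ lattice vertex operator algebra, then classify the possible odd extensions. The paper reaches $\dim V_1=276$ not via a character computation but by first showing $L(1)V_1=0$ (a short argument using the invariant bilinear form and CFT type), which makes $V^{\bar 0}$ strongly rational in the sense of Dong--Mason, and then invoking their structural results together with Proposition~5.14 of \cite{Dun_VACo}.

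There is, however, a genuine error in your argument. You assert that $V^{\bar 0}$ is self-dual, and that the level-one $\widehat{\mathfrak{so}}_{24}$ vertex operator algebra $(\vfn)^{\bar 0}$ is self-dual. Neither is true in the sense used here (unique irreducible module). The even part of a self-dual super vertex operator algebra is essentially never self-dual: for $V=\vsn$ itself, $V^{\bar 0}\cong V_{D_{12}}$ has exactly four irreducible modules, corresponding to the cosets of the $D_{12}$ root lattice $L$ in $L^*$. So your coset argument (``central charge $0$ forces equality'') does not apply, because neither the ambient $V^{\bar 0}$ nor the affine subalgebra you produce is holomorphic. The paper's route is: once the affine subalgebra generated by $V_1$ is identified as the level-one $\widehat D_{12}$ vertex operator algebra $V_L$, all of $V$ (hence $V^{\bar 0}$) is a level-one $V_L$-module, so decomposes into copies of the four irreducibles $V_L$, $V_{L+\lambda_v}$, $V_{L+\lambda_s}$, $V_{L+\lambda_c}$ with lowest $L(0)$-eigenvalues $0,\tfrac12,\tfrac32,\tfrac32$; since $V^{\bar 0}$ has integer $L(0)$-grading and one-dimensional degree-zero space (CFT type), only a single copy of $V_L$ can appear, giving $V^{\bar 0}\cong V_L$.

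Your ``candidates collapse to one'' step for $V_1\cong\mathfrak{so}_{24}$ is also underjustified. Dimension $276$ and reductivity alone do not force type $D_{12}$; the paper appeals to Dong--Mason's bound on the Lie rank by the central charge and to their integrability theorem \cite{MR2219226}, which together with semisimplicity and $\dim V_1=276$ single out $D_{12}$ at level one. Your character-theoretic route to $\dim V_1=276$ is a reasonable alternative (it is essentially H\"ohn's approach in \cite{Hoehn2007}), but note that pinning down $\chi_V$ presupposes knowing the modular representation carried by the four (super)characters of $V$ and $V_\tw$, and that in turn uses some of the structural input you are trying to derive.
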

(We refer to \S\ref{sec:va:funds} for explanations of the technical terms appearing in the statement of Theorem \ref{thm:moon:uniq-uniq}. We say that a super vertex operator algebra is self-dual if it is rational, irreducible as a module for itself, and if it is its only irreducible module up to isomorphism. We write $\deg(v)=n$ in case $L(0)v=nv$.)

As explained in detail in \cite{FLM}, the conjectural characterization of $\vn$ above puts the monster, and $\vn$, at the top tier of a three-tier tower 
\begin{gather}
	\begin{split}\label{eqn:intro-FLMtower}
		\MM\curvearrowright\vn&\text{\quad---\quad vertex operator algebras}\\
		\Co_0\curvearrowright\LL&\text{\quad---\quad even positive-definite lattices}\\
		M_{24}\curvearrowright\mathcal{G}&\text{\quad---\quad doubly-even linear binary codes}
	\end{split}
\end{gather}
involving three sporadic groups and three distinguished structures, arising in vertex algebra, lattice theory, and coding theory, respectively.

An important structural feature of this tower is the evident parallel between the conjectural characterization of $\vn$, and Conway's characterization of the Leech lattice quoted earlier. In (\ref{eqn:intro-FLMtower}) we write $\mathcal{G}$ for the {\em extended binary Golay code}, introduced (essentially) by Golay in \cite{Gol_NtsDgtCdg}, which is the unique (cf. e.g. \cite{MR1662447,MR1667939}) doubly-even linear binary code such that 
\begin{itemize}
\item the length of $\mathcal{G}$ is $24$, 
\item $\mathcal{G}$ is self-dual, and
\item $wt(C)\neq 4$ for any word $C\in\mathcal{G}$.
\end{itemize}
The automorphism group of $\mathcal{G}$ is the largest sporadic simple group discovered by Mathieu \cite{Mat_1861,Mat_1873}, denoted here by $M_{24}$.

Theorem \ref{thm:moon:uniq-uniq} now suggests that $\vsn$ may serve as a replacement for $\LL$ in the tower (\ref{eqn:intro-FLMtower}). With the recent development of Mathieu moonshine (see \cite{Eguchi2010} for an account of the original observation, and \cite{MR2985326} for a review), one may speculate about the existence of a vertex algebraic replacement for $\mathcal{G}$, and an alternative tower to (\ref{eqn:intro-FLMtower}), with tiers corresponding to moonshine in three forms.
\begin{gather}
	\begin{split}\label{eqn:intro-moonshinetower}
		\MM\curvearrowright\vn&\text{\quad---\quad monstrous moonshine}\\
		\Co_0\curvearrowright\vsn&\text{\quad---\quad Conway moonshine}\\
		M_{24}\curvearrowright\; ??&\text{\quad---\quad Mathieu moonshine}
	\end{split}
\end{gather}
Note that a natural analogue of the genus zero property of monstrous moonshine, and the moonshine for Conway's group considered here, has been obtained for Mathieu moonshine in \cite{Cheng2011}.

We give an explicit construction for $\vsn$ in \S\ref{sec:moon:cnstn}. The proof of the characterization result, Theorem \ref{thm:moon:uniq-uniq}, demonstrates that the even part of $\vsn$ is isomorphic to the lattice vertex algebra of type $D_{12}$, which, according to the boson-fermion correspondence (cf. \cite{MR643037,MR1284796}), is the even part of the Clifford module super vertex operator algebra $A(\a)$ attached to a $24$-dimensional orthogonal space, $\a$. From this point of view the connection to the Conway group is reasonably transparent, for we may identify $\a$ with the space $\LL\otimes_{\ZZ}\CC$ enveloping the Leech lattice. We realize $\vsn$ by performing a $\ZZ/2$-orbifold of the Clifford module super vertex operator algebra $A(\a)$. Our method also produces an explicit construction of the unique (up to isomorphism) canonically-twisted $\vsn$-module, which we denote $\vsn_\tw$, and which is also naturally a $\Co_0$-module. (Canonically-twisted modules for super vertex algebras are reviewed in \S\ref{sec:va:funds}, and the Clifford module super vertex operator algebra construction is reviewed in \S\ref{sec:va:cliff}.)

Our main results appear in \S\ref{sec:moon:mtseries}, where we consider the graded super trace functions 
\begin{gather}\label{eqn:intro-TgTgtw}
	T^s_g(\tau):=q^{-1/2}\sum_{n\geq 0}{\rm str}_{\vsn_{n/2}}g\, q^{n/2},\\
	T^s_{g,\tw}(\tau):=q^{-1/2}\sum_{n\geq 0}{\rm str}_{\vsn_{\tw,n+1/2}}g\, q^{n+1/2},
\end{gather}
arising naturally from the actions of $\Co_0$ on $\vsn$ and $\vsn_\tw$. (Necessary facts about the Conway group are reviewed in \S\ref{sec:gps:conway}.) 
\begin{sthm}[{\bf \ref{thm:moon:mtseries-mainthm1}}]
Let $g\in \Co_0$. Then $T^s_g$ is the normalized principal modulus for a genus zero subgroup of $\SL_2(\RR)$.
\end{sthm}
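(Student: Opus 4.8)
The plan is to show that the graded super trace $T^s_g$ defined in (\ref{eqn:intro-TgTgtw}) coincides with the function $T^s_g$ of (\ref{eqn:intro-defnTg}), namely $q^{-1/2}\prod_{n>0}\prod_{i=1}^{24}(1-\varepsilon_i q^{n-1/2})+\chi_g$, and then invoke the classical fact, established in \cite{MR554399,MR628715,MR780666}, that $T^s_g(2\tau)=t_g(\tau)+\chi_g$ is the normalized principal modulus attached to the genus zero group $\Gamma_g$. Thus the whole statement reduces to an explicit trace computation on the orbifold super vertex operator algebra $\vsn$ constructed in \S\ref{sec:moon:cnstn}.

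First I would recall the structure of $\vsn$ as a $\ZZ/2$-orbifold of the Clifford module super vertex operator algebra $A(\a)$ attached to the $24$-dimensional orthogonal space $\a\cong\LL\otimes_\ZZ\CC$. Concretely, $\vsn$ decomposes as the direct sum of the $\theta$-fixed subspace of $A(\a)$ and the $\theta$-fixed subspace of the canonically-twisted $A(\a)$-module, where $\theta$ is the canonical involution (the lift of $-\Id$ on $\a$, or equivalently the parity involution realized through the boson-fermion correspondence on the $D_{12}$ lattice vertex algebra). An element $g\in\Co_0=\Aut(\LL)$ acts on $\a$ and lifts to an automorphism of $A(\a)$ commuting with $\theta$, hence acts on $\vsn$. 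The graded super trace $\str_{\vsn_{n/2}} g$ is then computed sector by sector: on the untwisted sector one gets a half-integrally-moded fermionic (Clifford) Fock space built on $\a$, on which $g$ acts with eigenvalues $\varepsilon_i$, so the super trace is a product over modes $r\in\ZZ_{>0}-\tfrac12$ of $\prod_i(1-\varepsilon_i q^r)$; on the twisted sector one gets an integrally-moded Fock space plus the finite-dimensional Clifford module ground state, contributing the constant $\chi_g$ (up to an overall normalization and the $q^{-1/2}$ prefactor absorbing the vacuum energy, which is $-\tfrac12$ for $c=12$). Assembling the two sectors, the $\theta$-projection $\tfrac12(1+\theta)$ selects exactly the terms that combine into $q^{-1/2}\prod_{n>0}\prod_{i=1}^{24}(1-\varepsilon_i q^{n-1/2})+\chi_g$.

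The key steps, in order, are: (i) write down the character of the Clifford/fermionic Fock space $A(\a)$ twisted by $g$ and by $(-1)^F$, recognizing the half-integer-moded product $\prod_{r>0}\det(1-g\,q^r)$ with $r\in\ZZ_{>0}-\tfrac12$; (ii) do the same for the canonically-twisted module, tracking the zero-mode Clifford algebra action and the shifted conformal weight; (iii) implement the orbifold projection and add the two sectors, using that the super trace kills the "wrong-parity" contributions so that only the correct geometric-series factors survive; (iv) identify the resulting closed form with $t_g(\tau/2)+\chi_g$ from (\ref{eqn:intro-defnTg}); (v) cite the genus zero results of Conway--Norton and Queen (\cite{MR554399,MR628715}), together with the explicit tables of \S\ref{sec:mmdata} (Table \ref{tab:mmdata-Tsg}), to conclude that this function is the normalized principal modulus for a genus zero subgroup $\Gamma_g<\SL_2(\RR)$.

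The main obstacle I anticipate is step (ii)–(iii): getting the twisted-sector contribution exactly right, including the correct vacuum energy, the action of $g$ on the (possibly multi-dimensional, owing to the parity of $\dim\a$) ground-state Clifford module, and the precise sign conventions in the super trace, so that the twisted sector contributes precisely the constant $\chi_g$ with no $q$-dependent corrections and no spurious overall factor. This requires care with the $D_{12}$-versus-Clifford comparison and with the lift of $g$ through the $\ZZ/2$-orbifold (the choice of lift could a priori introduce a sign or a root of unity), and it is where the special feature of central charge $12$ and the vanishing constant term built into $T^s_g$ in (\ref{eqn:intro-defnTg}) really get used. Once the two sector characters are pinned down unambiguously, combining them and matching to $t_g(\tau/2)+\chi_g$ is a routine manipulation of infinite products, and the genus zero conclusion is then immediate from the cited literature.
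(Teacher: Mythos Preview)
Your overall strategy---reduce to $T^s_g(\tau)=t_g(\tau/2)+\chi_g$ and then invoke Conway--Norton and Queen---is exactly the paper's. But there is a genuine gap in how you propose to establish that identity. The sector-by-sector computation does not come out as you describe. Since $\vsn=A(\a)^0\oplus A(\a)^1_\tw$ with $A(\a)^0$ even and $A(\a)^1_\tw$ odd, the $\zz$-projection gives
\[
\tr_{A(\a)^0}\wh{g}\,q^{L(0)-c/24}=\tfrac12\Bigl(\frac{\eta_g(\tau/2)}{\eta_g(\tau)}+\frac{\eta_{-g}(\tau/2)}{\eta_{-g}(\tau)}\Bigr),
\]
not $\eta_g(\tau/2)/\eta_g(\tau)$ alone: the orbifold projection forces a symmetrization over $g$ and $-g$. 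Likewise the twisted-sector super trace is
\[
-\tr_{A(\a)^1_\tw}\wh{g}\,q^{L(0)-c/24}=\tfrac12\bigl(C_{\wh{g}}\,\eta_g(\tau)-C_{\wh{-g}}\,\eta_{-g}(\tau)\bigr),
\]
a genuine $q$-series beginning at order $q$ (the twisted vacuum has $L(0)-c/24=1$), not the constant $\chi_g$. So your step (iii), that ``the twisted sector contributes precisely the constant $\chi_g$ with no $q$-dependent corrections'', is false.

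The passage from the honest four-term expression (Lemma~\ref{lem:moon:mtseries-gddtracefns}) to $t_g(\tau/2)+\chi_g$ is exactly the eta-product identity of Lemma~\ref{lem:moon:mtseries-etaid}, and this is the missing idea. The paper does not prove it by manipulating infinite products: it invokes the unique $\Co_1$-invariant $N=1$ element of $\vfn$ (Proposition~\ref{prop:moon:cnstn-cotons}). The Ramond zero-mode $G(0)$ on the canonically-twisted $\vfn$-module satisfies $G(0)^2=L(0)-\tfrac12$, interchanges $A(\a)^1$ with $A(\a)^1_\tw$, and is $\coh$-equivariant, giving $\coh$-module isomorphisms $(A(\a)^1)_n\cong(A(\a)^1_\tw)_n$ for all $n>0$. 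Equality of $\wh{g}$-traces on these spaces is precisely (\ref{eqn:moon:mtseries-etaid}), modulo the $g=e$ case handled by a short Hecke-operator argument. Without this $N=1$ input (or a class-by-class verification of the $\Co_0$-family of identities), your steps (iii)--(iv) cannot be completed as ``routine manipulation''.
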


The statement that the $T^s_g$ are normalized principal moduli is a direct analogue, for the Conway group, of the monstrous moonshine conjectures, formulated by Conway--Norton in \cite{MR554399}. The moonshine conjectures were broadly expanded by Norton in \cite{MR933359,generalized_moonshine}, to an association of functions $T_{(m,m')}(\tau)$ to pairs $(m,m')$ of commuting elements in the monster. Norton's {\em generalized moonshine conjectures} state, among other things (see \cite{MR1877765} for a revised formulation), that $T_{(m,m')}$ should be a principal modulus for a genus zero group $\Gamma_{(m,m')}$, or a constant function, for every commuting pair $m,m'\in\MM$. In terms of vertex operator algebra theory, the functions $T_{(m,m')}$ should be defined by traces on twisted modules for $\vn$ (cf. \cite{Dong2000}).

The argument used to prove Theorem \ref{thm:moon:mtseries-mainthm1} also establishes the following result, which we may regard as confirming a special case of generalized moonshine for the Conway group.
\begin{sthm}[{\bf \ref{thm:moon:mtseries-mainthm2}}]
Let $g\in \Co_0$. Then $T^s_{g,\tw}$ is constant, with constant value $-\chi_g$, when $g$ has a fixed point in its action on the Leech lattice. If $g$ has no fixed points then $T^s_{g,\tw}$ is a principal modulus for a genus zero subgroup of $\SL_2(\RR)$.
\end{sthm}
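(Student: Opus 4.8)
The plan is to follow the argument used for Theorem~\ref{thm:moon:mtseries-mainthm1}, carrying the explicit computation through for the canonically-twisted module. First I would use the construction of $\vsnt$ from \S\ref{sec:moon:cnstn}, which realizes it out of the untwisted and canonically-twisted sectors of the Clifford module super vertex operator algebra $A(\a)$ attached to $\a=\LL\otimes_{\ZZ}\CC$, to express the graded super trace $T^s_{g,\tw}$ in closed form in terms of the eigenvalues $\{\varepsilon_i\}_{i=1}^{24}$ of $g$ on $\a$. The oscillator modes contribute infinite products of the shapes $\prod_{n\geq 1}\prod_i(1\pm\varepsilon_i q^{n})$ and $\prod_{n\geq 1}\prod_i(1\pm\varepsilon_i q^{n-1/2})$, while the zero modes of the twisted sector contribute a finite trace of a lift of $g$ to $\Pin(\a)$ acting on a spinor module for $\Cliff(\a)$; the latter evaluates to an expression of the form $\pm\prod(\varepsilon_i^{1/2}\pm\varepsilon_i^{-1/2})$. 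Collecting terms and applying the Jacobi triple product, $T^s_{g,\tw}$ becomes an explicit combination of eta- and theta-quotients; in particular it is holomorphic on $\HH$ and equal to $-\chi_g+O(q)$ as $\Im(\tau)\to\infty$.

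Next I would pin down the modular behaviour of $T^s_{g,\tw}$. Using the modular invariance theory for twisted modules over rational, $C_2$-cofinite super vertex operator algebras, together with the explicit product expression, one concludes that $T^s_{g,\tw}$ is a weight-zero modular function invariant under a discrete group $\Gamma<\SL_2(\RR)$ commensurable with the modular group---explicitly determined by the eigenvalues of $g$---and meromorphic at the cusps, with pole orders bounded by the minimal conformal weights of the associated twisted sectors.

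The two cases are now separated by the spinor trace. If $g$ fixes a nonzero vector of $\LL$, then $+1$ occurs among the $\varepsilon_i$, so the signed spinor trace $\prod(\varepsilon_i^{1/2}-\varepsilon_i^{-1/2})$ vanishes---a Witten-index cancellation coming from the fermion zero modes in the fixed direction. Feeding this back into the formula, and into its transforms governing the behaviour at the remaining cusps, one finds that $T^s_{g,\tw}$ is holomorphic on all of $\HH$ and regular at every cusp of $\Gamma$, the infinite cusp included; a holomorphic function on the compact Riemann surface $\Gamma\backslash\widehat{\HH}$ is constant, and the constant is the value of the $q^0$-coefficient, $-\chi_g$. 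If instead $g$ has no fixed vector, then the product expression has a genuine pole at some cusp of $\Gamma$---equivalently, an associated twisted sector has minimal conformal weight strictly below $c/24$---while $T^s_{g,\tw}$ remains holomorphic on $\HH$ and at the other cusps, so $T^s_{g,\tw}$ is a non-constant modular function for $\Gamma$. That $\Gamma$ is then a genus zero group and that $T^s_{g,\tw}$ is in fact a principal modulus for it follows by comparing with the explicit data of \S\ref{sec:mmdata} and \cite{MR554399,MR628715,MR780666}.

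The main obstacle, I expect, is the second step: establishing the precise invariance group, multiplier system, and cusp data of $T^s_{g,\tw}$. The product formula makes the $q$-expansion transparent, but upgrading it to modular invariance under a specific genus zero group---and controlling the order of growth at every cusp---requires the twisted-module refinement of Zhu's modular invariance theorem applied with some care, or, absent a uniform argument, a class-by-class identification of the resulting eta- and theta-quotients against the known Conway--Norton hauptmoduln. By comparison, the vanishing of the signed spinor trace in the fixed-point case and the final genus zero verification should be routine.
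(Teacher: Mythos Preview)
Your outline has the right ingredients---the four-term eta-quotient formula for $T^s_{g,\tw}$ (this is Lemma~\ref{lem:moon:mtseries-gddtracefns}) and the vanishing of $C_{\wh{g}}$ when $g$ fixes a vector (this is (\ref{eqn:moon:mtseries-Cgeig}))---but the step where you ``feed this back into the formula'' and conclude regularity at every cusp is a genuine gap. From (\ref{eqn:moon:mtseries-trxVV}) we have
\[
T^s_{g,\tw}=\tfrac12\left(\frac{\eta_{g}(\tau/2)}{\eta_{g}(\tau)}-\frac{\eta_{-g}(\tau/2)}{\eta_{-g}(\tau)}+C_{\wh{g}}\eta_{g}(\tau)+C_{\wh{-g}}\eta_{-g}(\tau)\right),
\]
so the vanishing of the single term $C_{\wh{g}}\eta_g$ leaves three nontrivial pieces---two untwisted-sector traces, plus $C_{\wh{-g}}\eta_{-g}$, which is generically nonzero since $-g$ need not have eigenvalue $1$. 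That these three conspire to the constant $-\chi_g$ is not a Witten-index cancellation coming from zero modes; it is an identity among traces that requires proof. Your proposed route through Zhu-type modular invariance for twisted modules would have to track, at every cusp, a different combination of twisted/twined characters, and you give no mechanism for why the singular parts cancel there.

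The paper avoids this entirely. The key input is Lemma~\ref{lem:moon:mtseries-etaid}, an eta-product identity proved \emph{uniformly} for all $g$ by exploiting the unique $\coh$-invariant $N=1$ element on $\vfn$ (Proposition~\ref{prop:moon:cnstn-cotons}): the Ramond zero mode $G(0)$ gives a $\coh$-equivariant isomorphism $(A(\a)^1)_n\xrightarrow{\sim}(A(\a)^1_\tw)_n$ for every $n>0$, which immediately collapses the four-term formula to the closed form
\[
T^s_{g,\tw}=C_{\wh{g}}\eta_{g}(\tau)-\chi_g.
\]
From here both cases are one line: if $g$ has a fixed point, $C_{\wh{g}}=0$ and $T^s_{g,\tw}\equiv-\chi_g$; if not, $T^s_{g,\tw}+\chi_g$ is a nonzero scalar times $\eta_g$, and $1/\eta_g$ is already known (from \cite{MR554399}) to agree, up to an additive constant, with a monstrous McKay--Thompson series, hence is a principal modulus for a genus zero group. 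Your endgame for the fixed-point-free case is essentially this last step, but getting there cleanly hinges on the $N=1$ identity, which your proposal does not invoke.
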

The problem of precisely formulating, and proving, generalized moonshine for the Conway group is an important direction for future work.

Generalized moonshine for the monster remains unproven in general, although a number of special cases have been established, by Dong--Li--Mason in \cite{MR1372716,Dong2000}, Ivanov--Tuite in \cite{MR1915258,MR1915259}, and H\"ohn in \cite{Hoe_GenMnsBbyMns}. The most general results on generalized moonshine are due to Carnahan \cite{MR2728485,MR2904095,Carnahan:2012gx,Car_MnsLieAlg}.

The present paper is closely related to earlier work \cite{Dun_VACo}, in which a vertex algebraic construction of Conway's sporadic simple group $\Co_1$ (cf. (\ref{eqn:intro-Co1})) was obtained. In \cite{Dun_VACo} a super vertex operator algebra $\vfn=\bigoplus_{n\geq 0}\vfn_{n/2}$ is defined (over the real numbers; we work here over $\CC$), and it is proven that $\vfn$ admits an $N=1$ structure---a certain super extension of the Virasoro action, cf. \S\ref{sec:va:funds}---for which the automorphism group is precisely $\Co_1$. In fact, the super vertex operator algebras $\vsn$ and $\vfn$ (when defined over $\CC$) are isomorphic, and the essential difference between this paper and \cite{Dun_VACo} is that we consider a different action of the Conway group: in the case of $\vsn$ the action is faithful, but for $\vfn$ the central subgroup $\{\pm\Id\}$ acts trivially, and thus one obtains an action of the simple group $\Co_1$ on $\vfn$. 

Both this work and \cite{Dun_VACo} rest upon the important antecedents \cite{MR1390654,FLMBerk}. In \S15 of \cite{FLMBerk}, the construction of the super vertex operator algebra $\vfn$ is described for the first time, and it is conjectured that the simple Conway group $\Co_1$ should act on $\vfn$ as automorphisms. Later, in \S5 of \cite{MR1390654}, a lattice super vertex operator algebra is identified, which turns out to be isomorphic to both $\vfn$ and $\vsn$, and it is explained that both $\Co_1$ and $\Co_0$ can act faithfully on this object. The construction of $\vsn$ given in \S\ref{sec:moon:cnstn} differs significantly from that of $\vfn$ described in \cite{FLMBerk}, but is closely connected, via the boson-fermion correspondence, to the description given in \cite{MR1390654}.

Although the graded trace functions attached to the action of $\Co_1$ on $\vfn$ are computed explicitly in \cite{Dun_VACo}, their modular properties are not considered in detail. From the point of view of moonshine, the trace functions arising from $\vsn$ are better: the functions 
\begin{gather}\label{eqn:intro-Tfg}
	T^f_g(\tau):=q^{-1/2}\sum_{n\geq 0}\str_{\vfn_{n/2}}g\,q^{n/2},
\end{gather}
defined for $g\in \Co_1$,
are generally not principal moduli, even though they satisfy the normalization condition (\ref{eqn:intro-pmcrit}). Nonetheless, the $\Co_1$-module structure on $\vfn$ plays an important role in the present paper. A crucial step in our proof of Theorems \ref{thm:moon:mtseries-mainthm1} and \ref{thm:moon:mtseries-mainthm2} is the verification of a $\Co_0$-family of eta-product identities (\ref{eqn:moon:mtseries-etaid}), which we establish in Lemma \ref{lem:moon:mtseries-etaid}. We are able to prove these in a uniform manner by utilizing the unique (cf. Proposition \ref{prop:moon:cnstn-cotons}) $\Co_1$-invariant $N=1$ structure on $\vfn$.

Another work of particular relevance to the moonshine for Conway's group we consider here is \cite{DunFre_RSMG}, in which the McKay--Thompson series (\ref{eqn:intro-Tm}) of monstrous moonshine are characterized, following earlier work \cite{ConMcKSebDiscGpsM}, in terms of certain regularized Poincar\'e series, called {Rademacher sums}. The theorems in \S6 of \cite{DunFre_RSMG} imply that a discrete group $\Gamma<\SL_2(\RR)$, commensurable with $\SL_2(\ZZ)$ and having width one at the infinite cusp, has genus zero, if and only if the associated Rademacher sum
\begin{gather}\label{eqn:intro:conm-RGamma}
	R_{\Gamma}(\tau):=q^{-1}+\lim_{K\to \infty}
	\sum_{\gamma\in\Gamma_{\infty}\backslash\Gamma_{<K}^\times}
	\left(e^{-2\pi \ii\gamma\tau}-e^{-2\pi \ii\gamma\infty}\right)
\end{gather}
is a principal modulus for $\Gamma$. (In (\ref{eqn:intro:conm-RGamma}) we write $\Gamma_\infty$ for the subgroup of upper-triangular matrices in $\Gamma$, and $\Gamma_\infty\backslash\Gamma_{<K}^\times$ denotes the set of non-trivial cosets for $\Gamma_\infty$ in $\Gamma$ such that if a representative $\gamma$ is re-scaled to a matrix $\left(\begin{smallmatrix} a&b\\c&d\end{smallmatrix}\right)$ with integer entries and $c>0$, then $c<K$ and $-K^2<d<K^2$. In case $\Gamma=\SL_2(\ZZ)$ the right-hand side of (\ref{eqn:intro:conm-RGamma}) is exactly the expression given for $J(\tau)+12$ by Rademacher in \cite{Rad_FuncEqnModInv}.) 

So in particular, the results of \cite{DunFre_RSMG} imply that all the McKay--Thompson series $T^s_g$ of Conway moonshine, attached to the Conway group via its action on $\vsn$, can be realized\footnote{Strictly speaking, the Rademacher sum $R_{\Gamma_g}(\tau)$ generally differs from $T^s_g(2\tau)$ by an additive constant, and a similar statement is true for the functions of monstrous moonshine. The normalized Rademacher sums, defined in \S4 of \cite{DunFre_RSMG}, have vanishing constant terms, and thus recover the $T_m$ and $T^s_g$ precisely.} as Rademacher sums. Thus we obtain a uniform construction of the $T^s_g$, as the (normalized) Rademacher sums attached to their invariance groups $\Gamma_g$. 

The formulation of a characterization of the $T^s_g$, in analogy with that given for the $T_m$ in Theorem 6.5.1 of \cite{DunFre_RSMG}, is another important problem for future work.

\subsection{Mathieu Moonshine}\label{sec:intro:matm}

The significance of the super vertex operator algebra $\vsn$ is further demonstrated by recent developments in Mathieu moonshine---mentioned above, in connection with the tower (\ref{eqn:intro-moonshinetower})---which features an assignment of weak Jacobi forms\footnote{The standard reference for the theory of Jacobi forms is \cite{eichler_zagier}.} 
of weight zero and index one, to conjugacy classes in the sporadic simple Mathieu group, $M_{24}$. 

In forthcoming work \cite{vacoeg} we demonstrate how the canonically-twisted $\vsn$-module $\vsn_\tw$ may be used to attach weak Jacobi forms of weight zero and index one to conjugacy classes in $\Co_0$ that fix a four-space in $\LL\otimes_{\ZZ}\CC$. The group $M_{24}$ is a subgroup of $\Co_0$, and our assignment recovers many (but not all) of the weak Jacobi forms attached to $M_{24}$ by Mathieu moonshine. Mathieu moonshine is identified as a special case of $23$ similar moonshine phenomena, collectively known as umbral moonshine, in \cite{UM,MUM}, and our construction generalizes naturally, so as to attach weak Jacobi forms of index greater than one to suitable elements of $\Co_0$. Several of the higher index Jacobi forms of umbral moonshine arise in this way, from the faithful action of $\Co_0$ on $\vsn_\tw$.

The weak Jacobi forms of Mathieu moonshine may be replaced with mock modular forms of weight $1/2$, by utilizing the irreducible unitary characters of the small $N=4$ superconformal algebra. (See \cite{MR2985326} for a review of this, including an introductory discussion of mock modular forms.) One of the main conjectures of Mathieu moonshine is that these mock modular forms---namely, those prescribed in \cite{MR2793423,Eguchi2010a,Gaberdiel2010a,Gaberdiel2010}, and defined in a uniform way, via Rademacher sums, in \cite{Cheng2011}---are the graded traces attached to the action of $M_{24}$ on some graded infinite-dimensional $M_{24}$-module. Despite the work of Gannon \cite{Gannon:2012ck}, proving the existence of a such an $M_{24}$-module,
an explicit construction of the Mathieu moonshine module is still lacking. The result of \cite{vacoeg} stated above, that many of the weak Jacobi forms of Mathieu moonshine may be recovered from an action of $M_{24}$ on $\vsn_\tw$, demonstrates that $\vsn$ may serve as an important tool in the construction of this moonshine module for $M_{24}$.

Strong evidence in support of the idea that $\vsn$ can play a role in the construction of modules for Mathieu moonshine, and umbral moonshine more generally, is given in \cite{2014arXiv1406.5502C}, where $\vsn_\tw$ is used to attach weak Jacobi forms of weight zero and index two to all of the elements of the sporadic simple Mathieu groups $M_{23}$ and $M_{22}$ (characterized as point stabilizers in $M_{24}$ and $M_{23}$, respectively). Further, it is shown that the representation theory of the $N=2$ and small $N=4$ superconformal algebras (cf. \cite{Eguchi1988a}) naturally leads to assignments of (vector-valued) mock modular forms to the elements of these groups. Thus the first examples of explicitly realized modules underlying moonshine phenomena relating mock modular forms to sporadic simple groups are obtained via $\vsn$ in \cite{2014arXiv1406.5502C}.

Aside from the interesting connections to umbral moonshine, the main result of the forthcoming work \cite{vacoeg} is the assignment of a weak Jacobi form of weight zero and index one to any symplectic derived autoequivalence of a projective complex K3 surface that fixes a stability condition in the distinguished space defined by Bridgeland in \cite{MR2376815}. Conjecturally, the data of such a stability condition is equivalent to the physical notion of a supersymmetric non-linear sigma model on the corresponding K3 surface. 
(See \cite{MR1479699} for a detailed discussion of the moduli space of K3 sigma models, \cite{MR2955931} for a concise treatment, and \cite{2013arXiv1309.6528H} for the relationship with stability conditions.) 
As demonstrated by Witten in \cite{MR970278}, a supersymmetric non-linear sigma model defines a weak Jacobi form, called the {\em elliptic genus} of the sigma model in question, and it turns out that the Jacobi form one obtains in the case of a(ny) K3 sigma model is precisely that arising from the identity element of $\Co_0$ in the construction of \cite{vacoeg}. 

More generally, one expects, on physical grounds (cf. \cite{MR2955931}), to obtain a weak Jacobi form (with level) from any supersymmetry-preserving automorphism of a non-linear sigma model---call it a {\em twined elliptic genus}---and it is shown in \cite{MR2955931} that the automorphism groups of K3 sigma models are the subgroups of $\Co_0$ that fix four-spaces\footnote{This is a quantum analogue of the celebrated result of Mukai \cite{Mukai}, that the finite groups of symplectic automorphisms of a K3 surface are the subgroups of the sporadic simple Mathieu group $M_{23}$ that have at least five orbits in their action on $24$ points.} in $\LL\otimes_{\ZZ}\CC$. In general it is hard to compute twined elliptic genera, for the the Hilbert spaces underlying non-linear sigma models can, so far, only be constructed for certain special examples. Nonetheless, we find that the construction of \cite{vacoeg} agrees precisely with the explicit computations of \cite{MR2955931,Gaberdiel:2012um,2014arXiv1403.2410V}, which account for about half the conjugacy classes of $\Co_0$ that fix a four-space in $\LL\otimes_{\ZZ}\CC$. Thus the main result of \cite{vacoeg} indicates that $\vsn$ may serve as a kind of universal object for understanding the twined elliptic genera of K3 sigma models. It may develop that $\vsn$ can shed light on more subtle structural aspects of K3 sigma models also.

The discussion here indicates that $\vsn$ plays an important role in Mathieu moonshine, and umbral moonshine more generally. On the other hand, that Conway moonshine and monstrous moonshine are closely related is evident from the discussions in \S\S\ref{sec:intro:monm},\ref{sec:intro:conm}. Thus the results of this paper furnish further evidence---see also \cite{2014arXiv1403.3712O}, and the introduction to \cite{UM}---that monstrous moonshine and umbral moonshine are related in a deep way, possibly having a common origin.

\subsection{Organization}

The organization of the paper is as follows. We review facts from vertex algebra theory in \S\ref{sec:va}. Basic notions are recalled in \S\ref{sec:va:funds}, invariant bilinear forms on super vertex algebras are discussed in \S\ref{sec:va:ibfs}, and the Clifford module super vertex operator algebra construction is reviewed in \S\ref{sec:va:cliff}. Spin groups act naturally on Clifford module super vertex operator algebras, and we review this in detail in \S\ref{sec:gps:spin}. All necessary facts about the Conway group are explained in \S\ref{sec:gps:conway}. The main results of the paper appear in \S\ref{sec:moon}, which features an explicit construction of $\vsn$ in \S\ref{sec:moon:cnstn}, the characterization of $\vsn$ in \S\ref{sec:moon:uniq}, and the analysis of its trace functions in \S\ref{sec:moon:mtseries}. The paper concludes with tables in \S\ref{sec:mmdata}, one for the $T^s_g$, and one for the $T^s_{g,\tw}$, which can be used to facilitate explicit computations.

\section{Vertex Algebra}\label{sec:va}

In this section we recall some preliminary facts from vertex algebra. In addition to the specific references that follow, we refer to the texts \cite{MR2082709,MR1651389,MR2023933} for more background on vertex algebras, vertex operator algebras, and the various kinds of modules over these objects.

\subsection{Fundamental Notions}\label{sec:va:funds}

A \emph{super vertex algebra} is a super vector space $V=V_{\bar{0}}\oplus V_{\bar{1}}$ equipped with a \emph{vacuum vector} $\mathbf{1} \in V_{\bar{0}}$, a linear operator $T:V\to V$, and a linear map
\begin{align}
\begin{split}\label{eqn:va:funds-vopcorr}
V & \to\End(V)[[z^{\pm1}]] \\
a & \mapsto Y(a,z)=\sum_{n\in\ZZ} a_{(n)} z^{-n-1}
\end{split}
\end{align}
associating to each $a\in V$ a \emph{vertex operator} $Y(a,z)$, which satisfy the following axioms for any $a,b, c\in V$:
\begin{enumerate}
\item\label{itm:va:funds-vaaxmer} $Y(a,z)b\in V\ldp z\rdp$ and if $a\in V_{\bar{0}}$ (resp. $a\in V_{\bar{1}}$) then $a_{(n)}$ is an even (resp. odd) operator for all $n$;
\item\label{itm:va:funds-vaaxid} $Y(\mathbf{1},z)=\Id_V$ and $Y(a,z)\mathbf{1}\in a+zV[[z]]$;
\item\label{itm:va:funds-vaaxtrans} $[T,Y(a,z)]=\partial_z Y(a,z)$, $T\mathbf{1}=0$, and $T$ is an even operator; and
\item\label{itm:va:funds-vaaxassoc} if $a\in V_{p(a)}$ and $b\in V_{p(b)}$ are $\ZZ/2$ homogenous, there exists an element
$$f\in V[[z,w]][z^{-1},w^{-1},(z-w)^{-1}]$$
depending on $a$, $b$, and $c$, such that
$$Y(a,z)Y(b,w)c,\quad (-1)^{p(a)p(b)}Y(b,w)Y(a,z)c,\quad\text{and}\quad Y(Y(a,z-w)b,w)c$$
are the expansions of $f$ in $V\ldp z\rdp\ldp w\rdp$, $V\ldp w\rdp\ldp z\rdp$, and $V\ldp w\rdp\ldp z-w\rdp$, respectively.
\end{enumerate}
In items \ref{itm:va:funds-vaaxmer} and \ref{itm:va:funds-vaaxassoc} above we write $V\ldp z\rdp$ for the vector space $V\ldp z\rdp=V[[z]][z^{-1}]$ whose elements are formal Laurent series in $z$ with coefficients in $V$. Note that $\CC\ldp z\rdp$ is naturally a field, and if $V$ is a vector space over $\CC$ then $V\ldp z\rdp$ is naturally a vector space over $\CC\ldp z\rdp$.

A \emph{module} over a super vertex algebra $V$ is a super vector space $M=M_{\bar{0}}\oplus M_{\bar{1}}$ equipped with a linear map
\begin{align}
\begin{split}
V & \to\End(M)[[z^{\pm1}]] \\
a & \mapsto Y_M(a,z)=\sum_{n\in\ZZ} a_{(n),M} z^{-n-1}
\end{split}
\end{align}
which satisfies the following axioms for any $a,b\in V$, $u\in M$:
\begin{enumerate}
\item $Y_M(a,z)u\in M\ldp z\rdp$ and if $a\in V_{\bar{0}}$ (resp. $a\in V_{\bar{1}}$) then $a_{(n),M}$ is an even (resp. odd) operator for all $n$;
\item $Y_M(\mathbf{1},z)=\Id_M$; and
\item if $a\in V_{p(a)}$ and $b\in V_{p(b)}$, there exists an element
$$f\in M[[z, w]][z^{-1},w^{-1},(z-w)^{-1}]$$
depending on $a$, $b$, and $u$, such that
$$Y_M(a,z)Y_M(b,w)u, \quad (-1)^{p(a)p(b)}Y_M(b,w)Y_M(a,z)u,$$
$$\text{and}\qquad Y_M(Y(a,z-w)b,w)u$$
are the expansions of $f$ in the corresponding spaces, $M\ldp z\rdp\ldp w\rdp$, $M\ldp w\rdp\ldp z\rdp$, and $M\ldp w\rdp\ldp z-w\rdp$, respectively.
\end{enumerate}

One can also define modules which are twisted by a symmetry of the vertex algebra; we shall use the following special case. Let $\theta:=\Id_{V_{\bar{0}}}\oplus(-\Id_{V_{\bar{1}}})$ be the parity involution on a super vertex operator algebra $V=V_{\bar{0}}\oplus V_{\bar{1}}$. A \emph{canonically-twisted module} over $V$ is a super vector space $M=M_{\bar{0}}\oplus M_{\bar{1}}$ equipped with a linear map
\begin{align}
\begin{split}
V & \to\End(M)[[z^{\pm{1/2}}]] \\
a & \mapsto Y_\tw(a,z^{1/2})=\sum_{n\in\frac{1}{2}\ZZ}a_{(n),\tw}z^{-n-1},
\end{split}
\end{align}
associating to each $a\in V$ a \emph{twisted vertex operator} $Y_\tw(a,z^{1/2})$, which satisfies the following axioms for any $a,b\in V, u\in M$:
\begin{enumerate}
\item $Y_\tw(a,z^{1/2})u\in M\ldp z^{1/2}\rdp$ and if $a\in V_{\bar{0}}$ (resp. $a\in V_{\bar{1}}$) then $a_{(n),\tw}$ is an even (resp. odd) operator for all $n$;
\item $Y_\tw(\mathbf{1},z^{1/2})=\Id_M$;
\item if $a\in V_{p(a)}$ and $b\in V_{p(b)}$, there exists an element
$$f\in M[[z^{1/2}, w^{1/2}]][z^{-{1/2}},w^{-{1/2}},(z-w)^{-1}]$$
depending on $a$, $b$, and $u$, such that
$$Y_\tw(a,z^{1/2})Y_\tw(b,w^{1/2})u, \quad (-1)^{p(a)p(b)}Y_\tw(b,w^{1/2})Y_\tw(a,z^{1/2})u,$$
$$\text{and}\qquad Y_\tw(Y(a,z-w)b,w^{1/2})u$$
are the expansions of $f$ in the three spaces $M\ldp z^{1/2}\rdp\ldp w^{1/2}\rdp$, in $M\ldp w^{1/2}\rdp\ldp z^{1/2}\rdp$, and in $M\ldp w^{1/2}\rdp\ldp z-w\rdp$, respectively; and
\item if $\theta(a)=(-1)^ma$, then $a_{(n),\tw}=0$ for $n\notin\ZZ+\frac{m}{2}$.
\end{enumerate}
More details can be found in \cite{MR1372724}.

The notion of super vertex algebra may be refined by introducing representations of certain Lie algebras. The \emph{Virasoro} algebra is the Lie algebra spanned by $L(m), m\in\ZZ$ and a central element ${\bf c}$, with Lie bracket
\begin{gather}\label{eqn:LmLn}
[L(m),L(n)]=(m-n)L(m+n)+\frac{m^3-m}{12}\delta_{m+n,0}{\bf c}.
\end{gather}
A super vertex \emph{operator} algebra is a super vertex algebra containing a \emph{Virasoro element} (or \emph{conformal element}) $\omega\in V_{\bar{0}}$ such that if $L(n):=\omega_{(n+1)}$ for $n\in \ZZ$ then 
\begin{enumerate}\setcounter{enumi}{4}
\item $L({-1})=T$;
\item $[\Lm, \Ln]=(m-n)L_{m+n}+\frac{m^3-m}{12}\delta_{m+n,0}c\Id_V$ for some $c\in \CC$, called the {\em central charge} of $V$; 
\label{item:Vircond}
\item $\Lo$ is a diagonalizable operator on $V$, with eigenvalues contained in $\frac{1}{2}\ZZ$ and bounded from below, and with finite-dimensional eigenspaces; and
\item the super space structure on $V$ is recovered from the $\Lo$-eigendata according to the rule that $p(a)=2n\pmod{2}$ when $L(0)v=nv$.
\end{enumerate}
According to item \ref{item:Vircond}, the components of $Y(\omega,z)$ generate a representation of the Virasoro algebra on $V$ with central charge $c$. 

For $V$ a super vertex operator algebra we write 
\begin{gather}\label{eqn:va:funds-degdec}
	V=\bigoplus_{n\in \frac{1}{2}\ZZ}V_n,\quad 
	V_n=\{v\in V\mid L(0)v=nv\},
\end{gather}
for the decomposition of $V$ into eigenspaces for $L(0)$, and we call $V_n$ the homogeneous subspace of {\em degree} $n$.

Following \cite{MR1615132,MR2208812}, a $V$-module $M=(M,Y_M)$ for a super vertex operator algebra $V$ is called {\em admissible} if there exists a grading $M=\bigoplus_{n\in\frac{1}{2}\ZZ}M(n)$, with $M(n)=\{0\}$ for $n<0$, such that $a_{(n)}M(k)\subset M(k+m-n-1)$ when $a\in V_m$. An admissible $V$-module is {\em irreducible} if it has no non-trivial proper graded submodules. A super vertex operator algebra $V$ is called {\em rational} if any admissible module is a direct sum of irreducible admissible modules, and we say that $V$ is {\em self-dual} if $V$ is rational, irreducible as a $V$-module, and if $V$ is the only irreducible admissible $V$-module, up to isomorphism.

There are two particularly important extensions of the Virasoro algebra to a super Lie algebra. The \emph{Neveu-Schwarz} algebra is a super Lie algebra spanned by $L(m), m\in\ZZ$, $G(n+1/2),n\in\ZZ$, and a central element ${\bf c}$; the $L(m)$ and ${\bf c}$ span the even subalgebra, isomorphic to the Virasoro algebra, and the $G(n+1/2)$ span the odd subspace. The Lie bracket is defined by \eqref{eqn:LmLn} and
\begin{gather}
[L(m),G(n+1/2)]=\frac{m-2(n+1/2)}{2}G(m+n+1/2) \label{eqn:LmGn+1/2},\\
[G(m+1/2),G(n-1/2)]=2L(m+n)+\frac{4(m+1/2)^2-1}{12}\delta_{m+n,0}{\bf c} \label{eqn:Gm+1/2Gn-1/2}.
\end{gather}

An \emph{$N=1$} super vertex operator algebra is a super vertex algebra containing an \emph{$N=1$ element} $\tau\in V_{\bar{1}}$ such that if $G(n+1/2):=\tau_{(n+1)}$ for $n\in\ZZ$ then $\omega:=\frac{1}{2}G(-1/2)\tau$ is a Virasoro element (with components $L(n):=\omega_{(n+1)}$) as above, and the $L(m), G(n+1/2)$ generate a representation of the Neveu-Schwarz algebra; in particular, the $L(m), G(n+1/2)$ satisfy \eqref{eqn:LmLn}, \eqref{eqn:LmGn+1/2}, and \eqref{eqn:Gm+1/2Gn-1/2}, where the role of ${\bf c}$ is played by $c\Id_V$ for some $c\in\CC$. For further discussion we refer to \cite{MR1302018}.

Another extension of the Virasoro algebra to a super Lie algebra is the \emph{Ramond} algebra, spanned by $L(m), m\in\ZZ$, $G(n), n\in\ZZ$, and a central element ${\bf c}$; as in the case of the Neveu-Scwarz algebra the $L(m)$ and ${\bf c}$ span the even subalgebra, isomorphic to the Virasoro algebra, and the $G(n)$ span the odd subspace. The Lie bracket is defined by \eqref{eqn:LmLn} and
\begin{gather}
[L(m),G(n)]=\frac{m-2n}{2}G(m+n) \label{eqn:LmGn}, \\
[G(m),G(n)]=2L(m+n)+\frac{4m^2-1}{12}\delta_{m+n,0}{\bf c}. \label{eqn:GmGn}
\end{gather}

If $V$ is an $N=1$ super vertex operator algebra (with $N=1$ element $\tau$ and Virasoro element $\omega=\frac{1}{2}G(-1/2)\tau$) and $M$ is a canonically-twisted module for $V$, then the operators $L(m):=\omega_{(n+1),\tw}$ and $G(n) := \tau_{(n+1/2),\tw}$ generate a representation of the Ramond algebra on $M$.

\subsection{Invariant Bilinear Forms}\label{sec:va:ibfs}

The notion of an invariant bilinear form on a vertex operator algebra module was introduced in \cite{MR1142494}. We say that a bilinear form $\lab\cdot\,,\,\cdot\rab:M\otimes M\to \CC$ on a module $(M,Y_M)$ for a super vertex operator algebra $V$ is {\em invariant} if 
\begin{gather}\label{eqn:va:funds-invbilfrm}
	\left\lab Y_M(a,z)b,c\right\rab
	=
	\left\lab b,Y_M^{\dag}(a,z)c\right\rab
\end{gather}
for $a\in V$ and $b,c\in M$, where $Y_M^{\dag}(a,z)$ denotes the {\em opposite} vertex operator, defined by setting
\begin{gather}\label{eqn:va:funds-oppvops}
	Y_M^{\dag}(a,z):=(-1)^nY_M(e^{zL(1)}z^{-2L(0)}a,z^{-1})
\end{gather}
for $a$ in $V_{n-1/2}$ or $V_n$. In the right-hand side of (\ref{eqn:va:funds-oppvops}) we have extended the definition of $Y_M$ from $V$ to $V\ldp z\rdp$ by requiring $\CC\ldp z\rdp$-linearity. That is, we define $Y_M(f(z)a,z)=f(z)Y_M(a,z)$ for $f(z)\in \CC\ldp z\rdp$ and $a\in V$.

Suppose that $M=\bigoplus_{n\in\frac{1}{2}\ZZ}M(n)$ is an admissible $V$-module. Then the {\em restricted dual} of $M$ is the graded vector space $M'=\bigoplus_{n\in \frac{1}{2}\ZZ}M'(n)$ obtained by setting $M'(n)=M(n)^*:=\hom_{\CC}(M(n),\CC)$. According to Proposition 2.5 of \cite{Dun_VACo} (see also Lemma 2.7 of \cite{MR2208812}, and Theorem 5.2.1 of \cite{MR1142494}), $M'$ is naturally an admissible $V$-module, called the {\em contragredient} of $M$. To define the $V$-module structure on $M'$ write $(\cdot\,,\cdot)_M$ for the natural pairing $M'\otimes M\to\CC$, and define $Y_M':V\to \End(M')[[z^{\pm 1]}]]$---the vertex operator correspondence {\em adjoint} to $Y_M$---by requiring that
\begin{gather}
	(Y_M'(a,z)b',c)_M=(b',Y^{\dag}(a,z)c)_M
\end{gather} 
for $a\in V$, $b'\in M'$ and $c\in M$. 

Following the discussion in \S5.3 of \cite{MR1142494} we observe that the datum of a non-degenerate invariant bilinear form on an admissible $V$-module $M$ is the same as the datum of a $V$-module isomorphism $M\to M'$. For if $\phi:M\to M'$ is a $V$-module isomorphism then we obtain a bilinear form $\lab\cdot\,,\cdot\rab$ on $M$ by setting $\lab b,c\rab=(\phi(b),c)_M$ for $b,c\in M$. It is easily seen to be invariant and non-degenerate. Conversely, if $\lab\cdot\,,\cdot\rab$ is a non-degenerate invariant bilinear form on $M$ then invariance implies that $\lab M(m),M(n)\rab\subset\{0\}$ unless $m=n$ (cf. Proposition 2.12 of \cite{MR1610719}), and so we obtain a linear, grading preserving isomorphism $\phi:M\to M'$ by requiring $(\phi(b),c)_M=\lab b,c\rab$ for $b,c\in M(n)$, $n\in \frac{1}{2}\ZZ$. The invariance of $\lab\cdot\,,\cdot\rab$ then implies $\phi(Y_M(a,z)b)=Y'_M(a,z)\phi(b)$ for $a\in V$ and $b\in M$, so $\phi$ is an isomorphism of $V$-modules.

The following theorem of Scheithauer is the super vertex operator algebra version of a result first proved for vertex operator algebras by Li in \cite{MR1303287}.
\begin{theorem}[\cite{MR1610719}]\label{thm:va:funds-invbilfms}
The space of invariant bilinear forms on a super vertex operator algebra $V$ is naturally isomorphic to the dual of $V_0/L(1)V_1$.
\end{theorem}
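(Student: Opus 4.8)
The plan is to reduce the statement to Li's original theorem for vertex operator algebras, using the structure of $V$ as a module over its even subalgebra, while keeping careful track of the sign factor $(-1)^n$ appearing in the definition \eqref{eqn:va:funds-oppvops} of the opposite vertex operator in the super setting. First I would set up the linear map from the space of invariant bilinear forms on $V$ to $\hom_{\CC}(V_0/L(1)V_1,\CC)$: given an invariant bilinear form $\lab\cdot\,,\cdot\rab$, I would restrict attention to $V_0$ and produce the functional $v\mapsto\lab \mathbf{1},v\rab$, and then check, via the invariance identity \eqref{eqn:va:funds-invbilfrm} applied with $a=\omega$ and with the translation/conformal data, that this functional kills $L(1)V_1$ and hence descends to $V_0/L(1)V_1$. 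This uses the same computation as in the non-super case: invariance forces $\lab L(1)u,v\rab=\lab u,L(-1)v\rab$ (up to signs that are trivial here since $\omega$ is even), and $L(-1)\mathbf{1}=T\mathbf{1}=0$.

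Next I would construct the inverse map. Given $f\in\hom_{\CC}(V_0/L(1)V_1,\CC)$, I would define a bilinear form on $V$ by a formula of the shape
\begin{gather}
	\lab a,b\rab_f := f\!\left(\pi_0\!\left(e^{zL(1)}(-z^{-2})^{L(0)}a\right)_{(n-1)}b\Big|_{\text{constant term}}\right),
\end{gather}
mimicking Li's formula (the projection $\pi_0$ onto $V_0$ picks out the relevant coefficient), where one must be slightly careful with the fractional powers of $z$ and the sign $(-1)^n$ when $a$ is odd. The content here is to verify that $\lab\cdot\,,\cdot\rab_f$ is well-defined (the relevant expression lands in $V_0$ and the answer is independent of auxiliary choices), that it is invariant in the sense of \eqref{eqn:va:funds-invbilfrm}, and that the two maps are mutually inverse. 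For the verification of invariance, the key input is the skew-symmetry and associativity axioms for super vertex operators, together with the identity $Y^{\dag\dag}=Y$, which I would establish by a direct computation with $e^{zL(1)}z^{-2L(0)}$ and the commutation relations $[L(0),L(1)]=-L(1)$, $[L(1),Y(a,z)]=\ldots$; the signs $(-1)^{p(a)p(b)}$ from axiom \eqref{itm:va:funds-vaaxassoc} and the $(-1)^n$ in \eqref{eqn:va:funds-oppvops} must combine consistently, and checking this is where the super case genuinely differs from Li's argument.

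I expect the main obstacle to be precisely this bookkeeping of signs and half-integer gradings: ensuring that the opposite vertex operator $Y^{\dag}$ is an involution on the nose in the super setting, and that the candidate form $\lab\cdot\,,\cdot\rab_f$ transforms correctly under the interchange of arguments (it will be supersymmetric rather than symmetric on the odd part), so that invariance in the form \eqref{eqn:va:funds-invbilfrm} holds without a residual sign discrepancy. Once $Y^{\dag\dag}=Y$ is in hand, the rest follows the template of \cite{MR1303287} closely: invariance of $\lab\cdot\,,\cdot\rab_f$ reduces to the commutator formula together with the fact that $L(1)$ annihilates $\mathbf{1}$, and bijectivity is checked by evaluating $\lab\cdot\,,\cdot\rab_f$ on $\mathbf{1}\otimes V_0$ to recover $f$, and conversely showing any invariant form is determined by its restriction to $\mathbf{1}\otimes V_0$ by propagating via \eqref{eqn:va:funds-invbilfrm}. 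Alternatively, if one prefers a softer route, one could invoke the identification of invariant bilinear forms on $V$ with $V$-module maps $V\to V'$ recalled just before the theorem, reducing the statement to computing $\hom_V(V,V')$; but the explicit approach above is more self-contained and is the one I would carry out.
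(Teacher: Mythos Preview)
The paper does not supply its own proof of this theorem: it is quoted as a result of Scheithauer \cite{MR1610719}, introduced as ``the super vertex operator algebra version of a result first proved for vertex operator algebras by Li in \cite{MR1303287},'' and then stated with attribution but without argument. So there is no proof in the paper to compare your proposal against.

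That said, your strategy---adapt Li's proof to the super setting by tracking the extra sign $(-1)^n$ in \eqref{eqn:va:funds-oppvops} and the half-integer grading---is exactly the shape of Scheithauer's argument in \cite{MR1610719}. The paragraph following the theorem in the paper, explaining the equivalence between the invariance convention \eqref{eqn:va:funds-invbilfrm} used here and Scheithauer's convention \eqref{eqn:va:funds-invbilfrmalt}, is there precisely because the sign bookkeeping you flag as the ``main obstacle'' is genuinely the only new content over Li's proof; once the conventions are reconciled the argument transfers verbatim. Your outline is correct in spirit, and the alternative ``softer route'' you mention (identifying invariant forms with $V$-module maps $V\to V'$ and then computing $\hom_V(V,V')$) is also viable and is in fact how the paper itself uses the result downstream, e.g.\ in the proof of Theorem~\ref{thm:moon:uniq-uniq}.
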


Note that there is some flexibility available in the definitions of invariant bilinear form and opposite vertex operator in the super case. For in \cite{MR1610719}, a bilinear form $\lab\cdot\,,\,\cdot\rab^*$ is said to be invariant if 
\begin{gather}\label{eqn:va:funds-invbilfrmalt}
	\left\lab Y(a,z)b,c\right\rab^*=(-1)^{|a||b|}\left\lab b,Y^*(a,z)c\right\rab^*
\end{gather}
for $Y^*(a,z):=Y(e^{-\lambda^{-2} zL(1)}(-\lambda^{-1}z)^{-2L(0)}a,-\lambda^2 z^{-1})$. Taking $\lambda=\pm \ii$ we recover the usual notion of opposite vertex operator for a vertex algebra (cf. (5.2.4) of \cite{MR1142494}), upon restriction to the even sub vertex algebra of $V$. Observe that this notion of invariant bilinear form is equivalent to (\ref{eqn:va:funds-invbilfrm}). For if we take $\lambda=-\ii$ in the definition of $Y^*$, for example, then given a bilinear form $\lab\cdot\,,\,\cdot\rab^*$ satisfying (\ref{eqn:va:funds-invbilfrmalt}), we obtain a bilinear form $\lab\cdot\,,\,\cdot\rab$ that satisfies (\ref{eqn:va:funds-invbilfrm}), upon setting $\lab a,b\rab=\lab a,b\rab^*$ for $p(a)=0$, and $\lab a,b\rab=-\ii\lab a,b\rab^*$ for $p(a)=1$. The case that $\lambda=\ii$ is directly similar.

\subsection{Clifford Module Construction}\label{sec:va:cliff}

We now review the standard construction of vertex operator algebras via Clifford algebra modules.

Let $\a$ be a finite dimensional complex vector space equipped with a non-degenerate symmetric bilinear form $\langle\cdot\,,\cdot\rangle$. For each $n\in\ZZ$ let $\a(n+1/2)$ be a vector space isomorphic to $\a$, with a chosen isomorphism $\a\to\a(n+1/2)$, denoted $u\mapsto u(n+1/2)$, and define
\be
\aa=\bigoplus_{n\in\ZZ}\a(n+1/2).
\ee
We can extend $\langle\cdot\,,\cdot\rangle$ to a non-degenerate symmetric bilinear form on $\aa$ by $\langle u(r),v(s)\rangle=\langle u,v\rangle\delta_{r+s,0}$. We obtain a {\em polarization} of $\aa$ with respect to this bilinear form---that is, a decomposition $\aa=\aa^-\oplus \aa^+$ into a direct sum of maximal isotropic subspaces---by setting
\be
\aa^-=\bigoplus_{n< 0}\a(n+1/2)\text{ and }\aa^+=\bigoplus_{n\ge 0}\a(n+1/2).
\ee

Define the \emph{Clifford algebra} of $\aa$ by $\Cliff(\aa)=T(\aa)/I(\aa)$, where $T(\aa)$ is the tensor algebra of $\aa$, with unity denoted $\mathbf{1}$, and $I(\aa)$ is the (two-sided) ideal of $T(\aa)$ generated by elements of the form $u\otimes u+\langle u,u\rangle\mathbf{1}$ for $u\in\aa$. Denote by $B^-$ and $B^+$ the subalgebras of $\Cliff(\aa)$ generated by $\aa^-$ and $\aa^+$, respectively. The linear map $-\Id$ on $\aa$ induces an involution $\theta$ on $\Cliff(\aa)$ according to the universal property of Clifford algebras. We call $\theta$ the {\em parity} involution and write $\Cliff(\aa)=\Cliff(\aa)^0\oplus \Cliff(\aa)^1$ for the corresponding decomposition into eigenspaces, where $\Cliff(\aa)^j$ denotes the $\theta$-eigenspace with eigenvalue $(-1)^j$. 

Let $\CC\vv$ be a one-dimensional vector space equipped with the trivial action from $B^+$, i.e. $\mathbf{1}\vv=\vv$ and $u\vv=0$ for any $u\in\aa^+$. Define $A(\a)$ to be the induced $\Cliff(\aa)$-module, $A(\a)=\Cliff(\aa)\otimes_{B^+}\CC\vv$. We have a natural isomorphism of $B^-$-modules
\begin{gather}\label{eqn:va:cliff-Aisomwedge}
	A(\a)\simeq \bigwedge(\aa^-)\vv.
\end{gather}
For $a\in\a$, define a vertex operator for $a(-1/2)\vv$ by
\be
Y(a(-1/2)\vv,z)=\sum_{n\in\ZZ}a(n+1/2)z^{-n-1}.
\ee
There is a {\em reconstruction theorem} (Theorem 4.4.1 of \cite{MR2082709}) which ensures that these vertex operators extend uniquely to a super vertex algebra structure on $A(\a)$. The super space structure $A(\a)=A(\a)^0\oplus A(\a)^1$ is given by the parity decomposition on $\bigwedge(\aa^-)\vv$. That is, restricting the isomorphism of (\ref{eqn:va:cliff-Aisomwedge}) we have
\begin{gather}\label{eqn:va:cliff-Aparitywedge}
	A(\a)^0\simeq \bigwedge^{\rm even}(\aa^-)\vv,\quad
	A(\a)^1\simeq \bigwedge^{\rm odd}(\aa^-)\vv.
\end{gather}

Choose an orthonormal basis $\{e_i:1\le i\le \dim \a \}$ for $\a$. The Virasoro element
\be\label{eqn:va:cm-vir}
\omega=-\frac{1}{4}\sum_{i=1}^{\dim \a }e_i(-3/2)e_i(-1/2)\vv
\ee
gives $A(\a)$ the structure of a super vertex operator algebra with central charge $c=\frac{1}{2}\dim\a$.

Observe that $A(\a)_0$ is spanned by the vacuum ${\vv}$. We compute $L(1)a=0$ for all $a\in A(\a)_1$, and conclude from Theorem \ref{thm:va:funds-invbilfms} that there is a unique non-zero invariant bilinear form on $A(\a)$ up to scale. Scale it so that $\lab \vv,\vv\rab=1$. Then taking $a=u(-1/2)\vv$ for $u\in \a$ we compute $Y'(a,z)=-Y(a,z^{-1})z^{-1}$ (cf. (\ref{eqn:va:funds-oppvops})), and conclude from (\ref{eqn:va:funds-invbilfrm}) that 
\begin{gather}\label{eqn:va:cliff-invbilfrm}
\lab u(-m-1/2)a,b\rab+\lab a,u(m+1/2)b\rab=0
\end{gather}
for $u\in \a$, $m\in \ZZ$, and $a,b\in A(\a)$. This identity is useful for computations. For example, taking $a=\vv$ and $b=v(-m-1/2)\vv$ for $v\in\a$, we see that
\begin{gather}
\lab u(-m-1/2)\vv,v(-m-1/2)\vv\rab=\lab u,v\rab
\end{gather}
for $u,v\in\a$ and $m\geq 0$.

A construction similar to $A(\a)$ produces a canonically-twisted module for $A(\a)$, which we call $A(\a)_\tw$. For the sake of simplicity, let us assume that the dimension of $\a$ is even.

For each $n\in\ZZ$ let $\a(n)$ be a vector space isomorphic to $\a$, with a chosen isomorphism $\a\to\a(n)$ denoted $u\mapsto u(n)$, and define
\be
\aa_\tw=\bigoplus_{n\in\ZZ}\a(n).
\ee
The bilinear form on $\a$ extends to a bilinear form on $\aa_\tw$ in the same way as $\aa$; namely, $\lab u(m),v(n)\rab=\lab u,v\rab\delta_{m+n,0}$.  We again require a decomposition of $\aa_\tw$ into maximal isotropic subspaces $\aa_\tw^+\oplus\aa_\tw^-$. For this first choose a polarization $\a=\a^-\oplus \a^+$, and then define $\aa_{\tw}^{\pm}$ by setting
\be
\aa_\tw^-=\a(0)^-\oplus\left(\bigoplus_{n<0}\a(n)\right)\text{ and }\aa_\tw^+=\a(0)^+\oplus\left(\bigoplus_{n>0}\a(n)\right)
\ee
where $\a(0)^{\pm}$ is the image of $\a^{\pm}$ under the isomorphism $u\mapsto u(0)$.

Denote by $B_\tw^-$ and $B_\tw^+$ the subalgebras of $\Cliff(\aa_\tw)$ generated by $\aa_\tw^-$ and $\aa_\tw^+$, respectively. Define the trivial action of $B_\tw^+$ on a one-dimensional space $\CC\vv_\tw$, and set $A(\a)_\tw=\Cliff(\aa_\tw)\otimes_{B_\tw^+}\CC\vv_\tw$. There is a natural $B_\tw^-$-module isomorphism
\begin{gather}\label{eqn:va:AtwIsomExtAlg}
	A(\a)_\tw\simeq \bigwedge(\aa_\tw^-)\vv_\tw.
\end{gather}
For $a\in\a$, define a twisted vertex operator for $a(-1/2)\vv\in A(\a)$ on $A(\a)_\tw$ by
\be
Y_\tw(a(-1/2)\vv,z^{1/2})=\sum_{n\in\ZZ} a(n)z^{-n-1/2}.
\ee
An analogue of the reconstruction theorem for modules (cf. \cite{MR2074176}) ensures that this collection of twisted vertex operators extends uniquely to a canonically-twisted $A(\a)$-module structure on $A(\a)_\tw$. In particular, the twisted vertex operator
\be
Y_\tw(\omega,z^{1/2})=\sum_{n\in\ZZ} \Ln z^{-n-2}
\ee
equips $A(\a)_\tw$ with a representation of the Virasoro algebra, and $\Lo=\omega_{(1),\tw}$ acts diagonalizably. An explicit computation yields that the eigenvalues of $\Lo$ on $A(\a)_\tw$ are contained in $\ZZ+\frac{1}{16}\dim\a$.

The finite dimensional Clifford algebra $\Cliff(\a)$ embeds in $\Cliff(\aa_\tw)$ as the subalgebra generated by $\a(0)$. Through this identification, $\Cliff(\a)$ acts on $A(\a)_\tw$, and the $\Cliff(\a)$-submodule of $A(\a)_\tw$ generated by $\vv_\tw$ is the unique (up to isomorphism) non-trivial irreducible representation of $\Cliff(\a)$. We shall denote this subspace of $A(\a)_\tw$ by $\CM$. By restricting the isomorphism of (\ref{eqn:va:AtwIsomExtAlg}) we obtain 
\begin{gather}\label{eqn:va:AtwwedgeCM}
	A(\a)_\tw\simeq\bigwedge\left(\bigoplus_{n<0}\a(n)\right)\otimes \CM,
	\quad
	\CM\simeq \bigwedge(\a(0)^-)\vv_\tw.
\end{gather}

There is a unique (up to scale) bilinear form $\lab\cdot\,,\cdot\rab_{\tw}$ on $\CM$ satisfying $\lab ua,b\rab_\tw+\lab a,ub\rab_\tw=0$ for $u\in \a$ and $a,b\in \CM$. To choose a scaling, let $\{a^-_i\}_{i=1}^{c}$ be a basis for $\a^-$, where $c=\frac{1}{2}\dim \a $, and set 
\begin{gather}\label{eqn:va:cliff-invbilfrmtw1}
\lab a^-_1\cdots a^-_{c}\vv_\tw,\vv_\tw\rab_\tw =1.
\end{gather}
We may extend this form uniquely to a bilinear form $\lab\cdot\,,\,\cdot\rab_\tw$ on $A(\a)_\tw$ by requiring that
\begin{gather}\label{eqn:va:cliff-invbilfrmtw2}
\lab u(-m)a,b\rab_\tw+\lab a,u(m)b\rab_\tw=0
\end{gather}
for $u\in \a$, $m\in \ZZ$, and $a,b\in A(\a)_\tw$. (Cf. (\ref{eqn:va:cliff-invbilfrm}).)

\section{Groups}\label{sec:gps}

In \S\ref{sec:gps:spin} we discuss the spin group of a complex vector space of even dimension, and in \S\ref{sec:gps:conway} we recall the definition and some features of the automorphism group of the Leech lattice, also known as the Conway group.

\subsection{The Spin Groups}\label{sec:gps:spin}

Define the {\em main anti-automorphism} $\alpha$ on $\Cliff(\a)$ by setting $\alpha(u_1\cdots u_k):=u_k\cdots u_1$ for $u_i\in\a$. Recall that the {\em spin group} of $\a$, denoted $\Spin(\a)$, is the set of even, invertible elements $x\in\Cliff(\a)$ with $\alpha(x)x={\bf 1}$ (i.e. the unit element of $\Cliff(\a)$) such that $xux^{-1}\in \a$ whenever $u\in\a$.

It is useful to be able to construct some elements of $\Spin(\a)$ explicitly. The expressions $\frac{1}{2}(uv-vu)\in\Cliff(\a)$, for $u,v\in\a$, span a $\binom{\dim \a}{2}$-dimensional subspace $\g<\Cliff(\a)$ which closes under the commutator $[x,y]=xy-yx$ on $\Cliff(\a)$, and forms a simple Lie algebra of type $D_c$, for $c=\frac{1}{2}\dim\a$. (Recall our assumption that $\dim\a$ is even.) The exponentials $\exp(\frac{1}{2}(uv-vu))\in\Cliff(\a)$ generate $\Spin(\a)$. For example, if $a^+,a^-\in\a$ are chosen so that 
\begin{gather}\label{eqn:gps:spin-apmevecs}
\lab a^{\pm},a^{\pm}\rab=0,\quad \lab a^-,a^+\rab =1,
\end{gather} 
then $X=\frac{\ii}{2}(a^-a^+-a^+a^-)$ satisfies $X^2=-{\bf 1}$, so $e^{\alpha X}=(\cos \alpha){\bf 1}+(\sin \alpha)X$.

Set $x(u)=xux^{-1}$ for $x\in \Spin(\a)$ and $u\in \a$. Then $u\mapsto x(u)$ is a linear transformation on $\a$ belonging to $\SO(\a)$ and the assignment $x\mapsto x(\cdot)$ defines a map $\Spin(\a)\to \SO(\a)$ with kernel $\{\pm \mathbf{1}\}$. Say that $\wh{g}\in \Spin(\a)$ is a {\em lift} of $g\in \SO(\a)$ if $\wh{g}(\cdot)=g$. For 
\begin{gather}\label{eqn:gps:spin-X}
X=\frac{\ii}{2}(a^-a^+-a^+a^-)
\end{gather}
with $a^{\pm}$ as in (\ref{eqn:gps:spin-apmevecs}) we have $Xa^{\pm}=\pm \ii a^{\pm}=-a^{\pm}X$ in $\Cliff(\a)$, so 
\begin{gather}\label{eqn:gps:spin-eaX}
	e^{\alpha X}(a^{\pm})=
	e^{\alpha X}a^{\pm}e^{-\alpha X}=e^{\pm 2\alpha \ii}a^{\pm},
\end{gather}
which is to say, $e^{\alpha X}$ is a lift of the orthogonal transformation on $\a$ which acts as multiplication by $e^{\pm 2\alpha \ii}$ on $a^{\pm}$, and as the identity on vectors orthogonal to $a^+$ and $a^-$. For future reference we note here also that $X\vv_\tw=\ii\vv_\tw$, so the action of $e^{\alpha X}\in\Spin(\a)$ on $\vv_\tw$ is given by
\begin{gather}\label{eqn:gps:spin-eaXvvtw}
	e^{\alpha X}\vv_\tw=e^{\alpha \ii}\vv_\tw. 
\end{gather}

The group $\Spin(\a)$ acts naturally on $A(\a)$ and $A(\a)_\tw$. Indeed, writing $A(\a)_1$ for the $\Lo$-eigenspace of $A(\a)$ with eigenvalue equal to $1$, the map $u(-\frac{1}{2})v(-\frac{1}{2})\vv\mapsto \frac{1}{2}(uv-vu)$ defines an isomorphism of vector spaces $A(\a)_1\to \g$, which becomes an isomorphism of Lie algebras once we equip $A(\a)_1$ with the bracket $[X,Y]:=X_{(0)}Y$. (It follows from the vertex algebra axioms that $[a_{(0)},b_{(n)}]=(a_{(0)}b)_{(n)}$ in $\End A(\a)$, for any $a,b\in A(\a)$ and $n\in \ZZ$.) Accordingly, the exponentials $e^{X_{(0)}}$ and $e^{X_{(0),\tw}}$ for $X\in A(\a)_1$ generate an action of $\Spin(\a)$ on $A(\a)$ and $A(\a)_{\tw}$, respectively. Explicitly, if $a\in A(\a)$ has the form $a=u_1(-n_1+\tfrac12)\cdots u_k(-n_k+\tfrac12)\vv$ for some $u_i\in\a$ and $n_i\in\ZZ^+$, then 
\begin{gather}\label{eqn:gps:spin-defnxa}
xa=u_1'(-n_1+\tfrac12)\cdots u_k'(-n_k+\tfrac12)\vv,
\end{gather}
for $x\in \Spin(\a)$, where $u_i'=x(u_i)$. Evidently $-\mathbf{1}$ is in the kernel of this assignment $\Spin(\a)\to\Aut(A(\a))$, so the action factors through $\SO(\a)$.

For $A(\a)_\tw$ we use (\ref{eqn:va:AtwwedgeCM}) to identify the elements of the form 
\begin{gather}
u_1(-n_1)\cdots u_k(-n_k)\otimes y
\end{gather} 
as a spanning set, where $u_i\in\a$ and $n_i\in\ZZ^+$ as above, and $y\in \CM$. The image of such an element under $x\in \Spin(\a)$ is given by $u_1'(-n_1)\cdots u_k'(-n_k)\otimes xy$, where $u_i'=x(u_i)$ as before. Since $\CM$ is a faithful $\Spin(\a)$-module, so too is $A(\a)_\tw$. 

In terms of the vertex operator correspondences we have
\begin{gather}\label{eqn:gps:spin-spinactsvops}
\begin{split}
Y(xa,z)xb&=xY(a,z)b=\sum_{n\in\ZZ}x(a_{(n)}b)z^{-n-1},\\
Y_\tw(xa,z^{1/2})xc&=xY_\tw(a,z^{1/2})c=\sum_{n\in\tfrac12\ZZ}x(a_{(n),\tw}c)z^{-n-1},
\end{split}
\end{gather}
for $x\in \Spin(\a)$, $a,b\in A(\a)$ and $c\in A_{\tw}(\a)$.

Recall that our construction of $\CM$ depends upon a choice of polarization $\a=\a^-\oplus \a^+$. Observe that if $x\in \Spin(\a)$ is a lift of $-\Id_\a\in\SO(\a)$ then the vector $\vv_\tw\in \CM$, characterized by the condition $u\vv_\tw=0$ for all $u\in\a^+$, satisfies $x\vv_\tw=\pm \ii^c\vv_\tw$, where $c=\frac{1}{2}\dim\a$. Indeed, if $\{a^{\pm}_i\}$ is a basis for $\a^\pm$, chosen so that $\lab a_i^-,a_j^+\rab=\delta_{i,j}$, then 
\begin{gather}\label{eqn:gps:spin-constzz}
\zz:=\prod_{i=1}^c e^{\frac{\pi}{2}X_i}
\end{gather}
is a lift of $-\Id_\a$, for $X_i=\frac{\ii}{2}(a_i^-a_i^+-a_i^+a_i^-)$, according to (\ref{eqn:gps:spin-eaX}). From (\ref{eqn:gps:spin-eaXvvtw}) it follows that $\zz\vv_\tw=\ii^c\vv_\tw$.

Thus we see that a choice of polarization $\a=\a^-\oplus\a^+$ distinguishes one of the two lifts of $-\Id_\a$ to $\Spin(\a)$; namely, the unique element $\zz\in\Spin(\a)$ such that $\zz(\cdot)=-\Id_\a$ and  
\begin{gather}\label{eqn:gps:spin-zzdefn}
\zz\vv_\tw=\ii^c\vv_\tw
\end{gather}
where $c=\frac12\dim \a$. We call this $\zz$ the lift of $-\Id_\a$ {\em associated} to the polarization $\a=\a^-\oplus \a^+$. The element $\zz$ acts with order two on $A(\a)_\tw$ when $\dim\a$ is divisible by $4$. In this case we write
\begin{gather}\label{eqn:gps:spin-zzespcs}
A(\a)_\tw=A(\a)_\tw^0\oplus A(\a)_\tw^1
\end{gather}
for the decomposition into eigenspaces for $\zz$, where $\zz$ acts as $(-1)^j\Id$ on $A(\a)_\tw^j$. The element $\zz$ is central so the action of $\Spin(\a)$ on $A(\a)_\tw$ preserves the decomposition (\ref{eqn:gps:spin-zzespcs}). 

Note here the difference between writing $-(xa)$ and $(-x)a$ for $x\in \Spin(\a)$ and $a\in A(\a)$. The former is just the additive inverse of the vector $xa$ in $A(\a)$, whereas the latter is the image of $a$ under the action of $-x=(-{\bf 1})x$, an element of $\Spin(\a)<\Cliff(\a)$. So, for example, $\zz a=(-\zz)a=a$ for $a\in A(\a)$, and in particular $(-\zz)a\neq -a$ unless $a=0$. On the other hand $\zz(u)=(-\zz)(u)=-u$ for $u\in \a$. So, from the description (\ref{eqn:gps:spin-defnxa}), we see that writing $A(\a)^j$ for the $(-1)^j$ eigenspace of either $\zz$ or $-\zz$ recovers the super space decomposition 
\begin{gather}
A(\a)=A(\a)^0\oplus A(\a)^1
\end{gather}
of $A(\a)$. (Cf. (\ref{eqn:va:cliff-Aparitywedge}).)

Suppose that $\a=V\otimes_{\RR}\CC$ for some real vector space $V$, and that $\lab\cdot\,,\cdot\rab$ restricts to an $\RR$-valued bilinear form on $V\subset \a$. Then we obtain another way to determine a lift of $-\Id_\a$ to $\Spin(\a)$ by choosing an orientation $\RR^+\w\subset\bigwedge^{\dim V }(V)$ of $V$. For if $\{e_i\}_{i=1}^{\dim V }$ is an ordered basis of $V$ satisfying $\lab e_i,e_j\rab=\pm \delta_{i,j}$ then $\zz=e_1\cdots e_{\dim V }$ belongs to $\Spin(\a)$ and satisfies $\zz(\cdot)=-\Id_\a$. (Recall that $\dim\a=\dim_{\CC}\a$ is assumed to be even.) On the other hand $e_1\wedge \cdots \wedge e_{\dim V }$ either belongs to $\RR^+\w$ or $\RR^-\w$, and so we can say that $\zz$ is {\em consistent} with the chosen orientation of $V$ in the former case, and {\em inconsistent} in the latter. Since a polarization $\a=\a^-\oplus \a^+$ of $\a=V\otimes_{\RR}\CC$ also determines a lift $\zz$ of $-\Id_\a$, characterized by the condition $\zz\vv_\tw=\ii^c\vv_\tw$, we can say that it too is either consistent or not with a given orientation of $V$, according as the associated lift $\zz$ is or is not consistent.

\subsection{The Conway Group}\label{sec:gps:conway}

The Leech lattice, denoted $\LL$, is the unique self-dual positive-definite even lattice of rank $24$ with no roots. That is, $\lab\lambda,\lambda\rab<4$ for $\lambda\in\LL$ implies $\lambda=0$. It was discovered in 1965 by Leech \cite{Lee_SphPkgHgrSpc,Lee_SphPkgs}, and the uniqueness statement is a consequence of a (somewhat stronger) theorem due to Conway \cite{Con_ChrLeeLat}. 

Conway also calculated \cite{MR0237634,MR0248216} the automorphism group of $\LL$, which turns out to be a non-trivial $2$-fold cover of the sporadic simple group that we denote $\Co_1$. We call $\Co_0:=\Aut(\LL)$ the {\em Conway group}. The center of $\Co_0$ is $Z(\Co_0)=\{\pm\Id\}$, and we have $\Co_1=\Co_0/Z(\Co_0)$.

Set $\LL_n:=\{\lambda\in\LL\mid \lab\lambda,\lambda\rab=2n\}$, the set of vectors of {\em type $n$} in $\LL$. Conway's uniqueness proof shows that any type $4$ vector is equivalent modulo $2\LL$ to exactly $47$ other vectors of type $4$ in $\LL$, and if $\lambda,\mu\in \LL_4$ are equivalent modulo $2\LL$ then $\lambda=\pm \mu$ or $\lab\lambda,\mu\rab=0$. Call a set $\{\lambda_i\}_{i=1}^{24}\subset\LL_4$ a {\em coordinate frame} for $\LL$ when the $\lambda_i$ are mutually orthogonal, but equivalent modulo $2\LL$. 

Set $\Omega=\{1,\ldots,24\}$ and write $\mathcal{P}(\Omega)$ for the power set of $\Omega$. Given a coordinate frame $S=\{\lambda_i\}_{i\in \Omega}$ for $\LL$, let $E=E_S$ be the subgroup of $\Co_0$ whose elements act as sign changes on the $\lambda_i$.
\begin{gather}
	E=E_S:=\left\{g\in\Co_0\mid g(\lambda_i)\in\{\pm \lambda_i\},\,\forall i\in \Omega\right\}
\end{gather}
Then $E$ is an elementary abelian $2$-group of order $2^{12}$. If we attach a subset $C(g)\subset\Omega$ to each $g\in E$ by setting 
\begin{gather}
C(g):=\{i\in \Omega\mid g(\lambda_i)=-\lambda_i\}
\end{gather}
then the symmetric difference operation equips $\GG:=\{C(g)\mid g\in E\}\subset\mathcal{P}(\Omega)$ with a group structure naturally isomorphic to that of $E$, in the sense that $g\mapsto C(g)$ is an isomorphism, $C(gh)=C(g)+C(h)$ for $g,h\in E$. The weight function $C\mapsto \#C$ equips $\GG$ with the structure of a binary linear code, and it turns out that $\GG$ is a copy of the extended binary {\em Golay code}, being the unique (cf. \cite{MR1662447,MR1667939}) self-dual doubly-even binary linear code of length $24$ with no codewords of weight $4$.

A choice of identification $\a=\LL\otimes_{\ZZ}\CC$ allows us to embed the Conway group $\Co_0=\Aut(\LL)$ in $\SO(\a)$. Given such a choice let us write $\coa$ for the corresponding subgroup of $\SO(\a)$, isomorphic to $\Co_0$. Write $g\mapsto \chi_g$ for the character of the corresponding representation of $G$.
\begin{gather}\label{eqn:gps:conway-chig}
	\chi_g:=
	\tr_{\a}g
\end{gather}

Given a subgroup $H<\SO(\a)$, say that $\wh{H}<\Spin(\a)$ is a {\em lift} of $H$ if the natural map $\Spin(\a)\to \SO(\a)$ restricts to an isomorphism $\wh{H}\xrightarrow{\sim} H$. 
\begin{proposition}\label{prop:gps:conway-lift}
Let $G<\SO(\a)$ and suppose that $G$ is isomorphic to $\Co_0$. Then there is a unique lift of $G$ to $\Spin(\a)$. 
\end{proposition}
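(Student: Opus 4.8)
The plan is to argue via the obstruction-theoretic characterization of lifts. A subgroup $H<\SO(\a)$ admits a lift to $\Spin(\a)$ if and only if the central extension $1\to\{\pm\mathbf{1}\}\to\pi^{-1}(H)\to H\to 1$ — where $\pi:\Spin(\a)\to\SO(\a)$ is the canonical double cover — splits, i.e. if and only if the class it determines in $H^2(H,\ZZ/2)$ is trivial. When a lift exists, the set of lifts is a torsor over $H^1(H,\ZZ/2)=\Hom(H,\ZZ/2)$. So the proposition will follow once I show two things: (i) the extension $\pi^{-1}(G)$ splits, and (ii) $\Hom(\Co_0,\ZZ/2)=0$, which forces the splitting to be unique.

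First I would dispose of (ii), which is the easy half: $\Co_0$ has center $\{\pm\Id\}$ with quotient the simple group $\Co_1=\Co_0/\{\pm\Id\}$, and $\Co_1$ being simple and non-abelian is perfect, so $\Hom(\Co_1,\ZZ/2)=0$; since $-\Id$ lies in the commutator subgroup of $\Co_0$ as well (one can exhibit $-\Id$ as a product of commutators using that $\Co_0$ acts irreducibly and contains plenty of involutions, or simply cite that $\Co_0$ is perfect), we get $\Hom(\Co_0,\ZZ/2)=0$. Hence if a lift exists it is the only one, because any two differ by an element of $\Hom(\Co_0,\ZZ/2)$.

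Next comes (i), the existence of \emph{some} lift, and this is where the real work lies. The obstruction class lives in $H^2(\Co_0,\ZZ/2)$, so the cleanest route is to invoke what is known about the Schur multiplier of $\Co_0$: since $\Co_1$ has Schur multiplier $\ZZ/2$ and $\Co_0$ is the corresponding (non-split, when realized inside $\Spin_{24}$) double cover, one checks that the Schur multiplier of $\Co_0$ itself has no $2$-torsion — equivalently, $H^2(\Co_0,\ZZ/2)$ is small enough that the particular class coming from $\Spin(\a)$ must vanish. Concretely, the natural $24$-dimensional orthogonal representation of $\Co_0$ on $\a=\LL\otimes_\ZZ\CC$ has the property that $-\Id\in\Co_0$ maps to $-\Id_\a\in\SO(\a)$; the spinorial obstruction is controlled by the second Stiefel–Whitney class $w_2$ of this representation together with $w_1$, and since the representation is even-dimensional and factors through $\SO(\a)$ with $\Co_0$ perfect, the pullback of $w_2$ to $H^2(\Co_0,\ZZ/2)$ can be shown to vanish. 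I expect the proof actually given to proceed more concretely: pick a coordinate frame, use the explicit subgroup $E\cong 2^{12}$ (the sign-change group attached to the Golay code $\GG$) together with its complement, lift generators of $\Co_0$ to $\Spin(\a)$ explicitly using the exponentials $e^{X_{(0)}}$ from \S\ref{sec:gps:spin}, and verify directly that the relations are satisfied on the nose rather than up to sign — the element $\zz$ of \eqref{eqn:gps:spin-constzz} associated to a polarization is the natural candidate for the lift of $-\Id_\a$.

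The main obstacle is precisely step (i): showing the extension splits. Checking $\Hom(\Co_0,\ZZ/2)=0$ is immediate from simplicity of $\Co_1$, but ruling out a non-trivial obstruction class in $H^2(\Co_0,\ZZ/2)$ requires genuine input — either detailed knowledge of the cohomology/Schur multiplier of $\Co_0$ (and the identification of which central extensions embed in $\Spin_{24}$), or an explicit construction of a lift together with a relation-by-relation verification, which is delicate because signs in $\Cliff(\a)$ are exactly what one is trying to control. I would lean on the explicit route, exploiting the Golay-code structure of the $2$-local geometry of $\Co_0$ and the formula \eqref{eqn:gps:spin-eaX} for how $e^{\alpha X}$ acts, so that the sign bookkeeping is organized by the combinatorics of $\GG$ rather than handled ad hoc.
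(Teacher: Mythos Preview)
Your cohomological framework is correct, and both existence and uniqueness follow for the reasons you identify. But the paper's proof is far shorter than you anticipate, and your prediction about its method is off.

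For existence, the paper does exactly what you call ``the cleanest route'' and then stops: it simply cites that the Schur multiplier of $\Co_0$ is trivial (referencing the Atlas), whence the central extension $1\to\{\pm\mathbf{1}\}\to\pi^{-1}(G)\to G\to 1$ splits and a lift exists. There is no explicit lifting of generators, no Golay-code sign bookkeeping, no Stiefel--Whitney classes. Your extended discussion of an ``explicit route'' via the subgroup $E\simeq 2^{12}$ and the exponentials $e^{\alpha X}$ describes machinery the paper uses elsewhere (notably in Proposition~\ref{prop:moon:cnstn-cotons}), but not here. You mention the Schur-multiplier argument yourself but then hedge, saying ``one checks that the Schur multiplier of $\Co_0$ itself has no $2$-torsion''; in fact it is trivial outright (since $\Co_0$ is the universal central extension of the perfect group $\Co_1$, whose multiplier is $\ZZ/2$), and the paper treats this as a known fact requiring no verification.

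For uniqueness, your argument via $\Hom(\Co_0,\ZZ/2)=0$ is correct and is the standard torsor statement. The paper argues slightly differently but equivalently: given two lifts $\coh,\coh'$, the intersection $\coh\cap\coh'$ is normal in $\coh$ and contains every element of odd order (since $\wh{g}$ and $\wh{g}'$ can only differ by a sign, and an odd-order element has a unique odd-order preimage). The only proper non-trivial normal subgroup of $\Co_0$ is its center of order two, so the intersection must be all of $\coh$. Both arguments ultimately rest on $\Co_0$ being perfect with $\Co_1$ simple; yours is the cleaner packaging, the paper's is marginally more hands-on.
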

\begin{proof}
Since the Schur multiplier of $\Co_0$ is trivial (cf. \cite{atlas}), the preimage of $\coa$ under the natural map $\Spin(\a)\to \SO(\a)$ contains a copy of $\Co_0$. So there is at least one lift. If there are two, $\coh$ and $\coh'$ say, then given $g\in \coa$, write $\wh{g}$ for the corresponding element of $\coh$, and interpret $\wh{g}'$ similarly, so that $\wh{g}(\cdot)=\wh{g}'(\cdot)=g$. Now $\wh{g}'=\pm \wh{g}$ as elements of $\Spin(\a)$, so $\coh\cap\coh'$ is a normal subgroup of $\coh$ (and of $\coh'$) containing all of its elements of odd order. The only proper non-trivial normal subgroup of $\Co_0$ its center, which has order two, so $\coh\cap\coh'=\coh$. That is, $\coh=\coh'$, as we required.
\end{proof}
Given $\coa<\SO(\a)$, isomorphic to $\Co_0$, write $\coh$ for the unique lift of $\coa$ to $\Spin(\a)$ whose existence, and uniqueness, is guaranteed by Proposition \ref{prop:gps:conway-lift}. Then $\coh$ is a copy of the Conway group acting naturally on $A(\a)$ and $A(\a)_\tw$. We write 
\begin{gather}
	\begin{split}	
	\coa&\xrightarrow{\sim}\coh\\
	g&\mapsto\wh{g}
	\end{split}
\end{gather}
for the inverse of the isomorphism $\coh\xrightarrow{\sim}\coa$ obtained by restricting the natural map $\Spin(\a)\to\SO(\a)$.

Observe that the action of $\coh\simeq \Co_0$ on $A(\a)_\tw$ depends upon the choice of polarization $\a=\a^-\oplus\a^+$ used to define $A(\a)_\tw$, for the central element of $\coh$ will be $\zz$ or $-\zz$, depending upon this choice, where $\zz$ denotes the lift of $-\Id_\a\in\SO(\a)$ to $\Spin(\a)$ associated to the chosen polarization (cf. \S\ref{sec:gps:spin}). We may assume that $\zz\in \coh$, so long as we allow ourselves to modify the polarization slightly, replacing $a^{\pm}_j$ with $a^{\mp}_j$ for some $j$, for example, given basis vectors $a^{\pm}_i\in\a^{\pm}$ satisfying $\lab a^{-}_i,a^+_j\rab=\delta_{i,j}$. (Cf. (\ref{eqn:gps:spin-constzz}).)

In practice we will take $\a^{\pm}$ to be the span of isotropic eigenvectors $a_{i}^{\pm}$ for the action of some $g\in G$ on $\a$, satisfying $\lab a_i^-,a_j^+\rab=\delta_{i,j}$. Since these conditions still hold after swapping $a_j^-$ with $a_j^+$ for some $j$, we may apply the following convention with no loss of generality: given a choice of identification $\a=\LL\otimes_{\ZZ}\CC$, with $\coa$ the corresponding copy of $\Co_0$ in $\SO(\a)$, and $\coh$ the unique lift of $\coa$ to $\Spin(\a)$, we assume that any polarization $\a=\a^-\oplus \a^+$ is chosen so that the associated lift $\zz$ of $-\Id_\a$ belongs to $\coh$.

\section{Moonshine}\label{sec:moon}

This section contains the main results of the paper. In \S\ref{sec:moon:cnstn} we describe the construction of a distinguished super vertex operator algebra $\vsn$, and its unique canonically-twisted module $\vsnt$. We equip both $\vsn$ and $\vsnt$ with actions by the Conway group $\Co_0$. We establish a characterization of the super vertex operator algebra structure on $\vsn$ in \S\ref{sec:moon:uniq}. In \S\ref{sec:moon:mtseries} we compute the graded traces attached to elements of $\Co_0$ via its actions on $\vsn$ and $\vsnt$. We identify these functions as normalized principal moduli in the case of $\vsn$, and as constant or principal moduli in the case of $\vsnt$, according as there are fixed points or not in the corresponding action on the Leech lattice.

\subsection{Construction}\label{sec:moon:cnstn}

From now on we take $\a$ to be $24$-dimensional. Given a polarization $\a=\a^-\oplus\a^+$, we let $\zz$ be the associated lift of $-\Id_\a$, so that $\zz\vv_\tw=\vv_\tw$ (cf. (\ref{eqn:gps:spin-zzdefn})). We write $A(\a)_\tw=A(\a)_\tw^0\oplus A(\a)_\tw^1$ for the decomposition of $A(\a)_\tw$ into eigenspaces for $\zz$ (cf. (\ref{eqn:gps:spin-zzespcs})). 

Because it is the even part of a super vertex algebra, $A(\a)^0$ is itself a vertex algebra, and $A(\a)_\tw$ is an (untwisted) $A(\a)^0$-module. The decomposition $A(\a)_\tw=A(\a)_\tw^0\oplus A(\a)_\tw^1$ is a decomposition into submodules for $A(\a)^0$. 

Consider the $A(\a)^0$-modules $\vsn$ and $\vsnt$ defined by setting
\begin{gather}\label{eqn:moon:cnstn-vuvv}
	\vsn=A(\a)^0\oplus A(\a)_{\tw}^1,\quad
	\vsnt=A(\a)^1\oplus A(\a)_{\tw}^0.
\end{gather}

\begin{proposition}\label{prop:moon:cnstn-vatwmodstruc}
The $A(\a)^0$-module structure on $\vsn$ extends uniquely to a super vertex operator 
algebra structure on $\vsn$, and the $A(\a)^0$-module structure on $\vsnt$ extends uniquely to a canonically-twisted $\vsn$-module structure.
\end{proposition}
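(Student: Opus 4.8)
The plan is to recognize $\vsn$ as a $\ZZ/2$-graded simple current extension of the vertex algebra $A(\a)^0$, and $\vsnt$ as the module over this extension induced from the complementary coset of simple currents, and to extract both existence and uniqueness from the structure theory of such extensions. First I would recall the representation theory of $A(\a)^0$: by the boson--fermion correspondence $A(\a)^0$ is isomorphic to the lattice vertex operator algebra $V_{D_{12}}$ (using $\dim\a=24$), which is rational, $C_2$-cofinite and of CFT type, with exactly four inequivalent irreducible modules, one for each class in $D_{12}^*/D_{12}\cong(\ZZ/2)^2$. I would match these with $A(\a)^0$, $A(\a)^1$, $A(\a)_\tw^0$ and $A(\a)_\tw^1$ by comparing lowest $\Lo$-degrees, namely $0$, $\tfrac12$, $\tfrac32$ and $\tfrac32$ respectively, where $\tfrac32=\tfrac1{16}\dim\a$ is the conformal weight of the twisted ground states $\CM$ and the decomposition $A(\a)_\tw=A(\a)_\tw^0\oplus A(\a)_\tw^1$ into $\zz$-eigenspaces is the splitting into the two (inequivalent) spinor classes.

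Next I would record that each of these four modules is a simple current and that they close under fusion into a group isomorphic to $(\ZZ/2)^2$, with $A(\a)^0$ the unit and any two of $A(\a)^1$, $A(\a)_\tw^0$, $A(\a)_\tw^1$ fusing to the third --- on the lattice side just $V_{D_{12}+\alpha}$ fusing with $V_{D_{12}+\beta}$ to give $V_{D_{12}+\alpha+\beta}$. Then $\{A(\a)^0, A(\a)_\tw^1\}$ is a $\ZZ/2$-graded system of simple currents closed under fusion, so by the theory of simple current extensions the $A(\a)^0$-module $\vsn = A(\a)^0\oplus A(\a)_\tw^1$ carries a vertex algebra structure extending it, unique up to isomorphism fixing $A(\a)^0$. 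Concretely the only new datum is an intertwining operator of type $\binom{A(\a)^0}{A(\a)_\tw^1\ A(\a)_\tw^1}$ (the remaining products being the vertex operators of $A(\a)^0$ and the $A(\a)^0$-module maps on $A(\a)_\tw^1$); this space is one-dimensional, and commutativity and associativity of the resulting $Y$ on all of $\vsn$ reduce to a single normalization, which I would pin down by compatibility with the invariant bilinear forms on $A(\a)$ and $A(\a)_\tw$ from \S\ref{sec:va:cliff} (Theorem \ref{thm:va:funds-invbilfms} guaranteeing these forms), giving existence and uniqueness together. The structure is \emph{super}, not ordinary, precisely because the lowest degree $\tfrac32$ of $A(\a)_\tw^1$ is not an integer: the $\ZZ/2$-grading $\{A(\a)^0, A(\a)_\tw^1\}$ is then the parity grading ($v$ of parity $2n\bmod 2$ when $\Lo v=nv$), and the sign enters through the self-braiding of $A(\a)_\tw^1$. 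The Virasoro element $\w\in A(\a)^0$ of $A(\a)$ serves as Virasoro element of $\vsn$, with central charge $12$. (On the lattice side $\vsn\cong V_{D_{12}^+}$, the lattice super vertex operator algebra of the odd overlattice $D_{12}^+=D_{12}\cup(D_{12}+s)$ obtained by adjoining a spinor $s$, whose super vertex operator algebra structure and its uniqueness are standard.)

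Relative to the index-two subgroup $\{A(\a)^0, A(\a)_\tw^1\}$ of $(\ZZ/2)^2$ the complementary coset is $\{A(\a)^1, A(\a)_\tw^0\}$, and the extension theory says that inducing either of these irreducible $A(\a)^0$-modules to $\vsn$ produces a unique (up to isomorphism) module structure on $\vsnt=A(\a)^1\oplus A(\a)_\tw^0$, again the only new data being one-dimensional spaces of intertwining operators, normalized by the bilinear forms. It remains to check that this $\vsn$-module is \emph{canonically-twisted}, i.e. twisted by the parity involution $\theta$ of $\vsn$: since $\theta$ acts trivially on $A(\a)^0\subset\vsn$, any $\theta$-twisted $\vsn$-module restricts to an ordinary $A(\a)^0$-module, which $\vsnt$ is; and for $a$ in the odd part $A(\a)_\tw^1$ of $\vsn$, the corresponding vertex operator on $\vsnt$ sends $A(\a)^1$ to $A(\a)_\tw^0$ and vice versa, with modes in $\ZZ+\tfrac12$ (lattice side: powers of $z$ in $\langle s,v\rangle+\ZZ=\tfrac12+\ZZ$ with $v$ the vector class), which is exactly the remaining axiom in the definition of a canonically-twisted module; the other axioms follow from the preceding discussion.

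The main obstacle is the second step: establishing the precise fusion group, deducing that the relevant spaces of intertwining operators are one-dimensional, and showing that the ``new'' twist-field products assemble, with a unique normalization, into a genuine super vertex algebra --- equivalently, that the $2$-cocycle obstruction to the $\ZZ/2$-extension vanishes and that the self-braiding of $A(\a)_\tw^1$ equals $-1$, forcing ``super'' rather than ``ordinary''. Transporting everything through the boson--fermion correspondence to the $D_{12}$-lattice picture makes this essentially routine, at the cost of care with the $2$-cocycles relating the Clifford and lattice presentations of $A(\a)$; carrying it out intrinsically, with $A(\a)$ and $A(\a)_\tw$, is conceptually cleaner but needs the associativity and commutativity properties of products of twisted vertex operators. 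Once these are in hand, both assertions of the proposition are immediate from the standard theory.
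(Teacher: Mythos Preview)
Your proposal is correct and follows essentially the same route as the paper: both arguments transport the problem through the boson--fermion correspondence to identify $A(\a)^0$ with the $D_{12}$ lattice vertex operator algebra, read off the four irreducibles and their fusion from $D_{12}^*/D_{12}\cong(\ZZ/2)^2$, and recognize $\vsn$ as the lattice super vertex operator algebra $V_{D_{12}^+}$. The only real difference is packaging: you frame the extension step in the language of simple currents and one-dimensional intertwining spaces, whereas the paper simply invokes the existing super vertex operator algebra structure on $V_{D_{12}^+}$ directly; and for $\vsnt$ the paper identifies the parity involution of $V_{D_{12}^+}$ with the lattice automorphism attached to the vector class $\lambda_v$ and reads off $V_{L^++\lambda_v}$ as the unique irreducible canonically-twisted module, which is a cleaner way to obtain both existence and uniqueness than verifying the twisted-module axioms by hand.
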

\begin{proof}
We proceed along similar lines to the proof of Proposition 4.1 in \cite{Dun_VACo}. Given a choice of polarization $\a=\a^-\oplus \a^+$, the boson-fermion correspondence \cite{MR643037} defines an isomorphism of vertex operator algebras $A(\a)^0\xrightarrow{\sim} V_L$, according to \cite{MR1284796}, where $V_L$ is the lattice vertex operator algebra attached to 
\begin{gather}
L=\left\{(n_i)\in \ZZ^{12}\mid \sum n_i=0\pmod{2}\right\},
\end{gather}
being a copy of the root lattice of type $D_{12}$. It also extends to compatible isomorphisms of vertex operator algebra modules
\begin{gather}\label{eqn:moon:cnstn-bfcorr}
	A(\a)^1\xrightarrow{\sim}V_{L+\lambda_v},\;
	A(\a)^0_\tw\xrightarrow{\sim}V_{L+\lambda_s},\;
	A(\a)^1_\tw\xrightarrow{\sim}V_{L+\lambda_c},
\end{gather}
where the $\lambda_x$ are representatives for the non-trivial cosets of $L$ in its dual, $L^*=\frac{1}{2}\ZZ^{12}$,
\begin{gather}\label{eqn:moon:cnstn-lambdax}
	\lambda_v=(1,0,\cdots,0),\;
	\lambda_s=\frac{1}{2}(1,1,\cdots,1),\;
	\lambda_c=\frac{1}{2}(-1,1,\cdots,1).
\end{gather}

The irreducible modules for a lattice vertex operator algebra are known \cite{MR1245855} to be in correspondence with the cosets of the lattice in its dual---i.e. the discriminant group of the lattice, $L^*/L$---and the associated fusion algebra is naturally isomorphic to the group algebra $\CC[ L^*/L]$. (These facts are explained in detail in \cite{MR1233387}.) 

From this we deduce that $\vsn$ is isomorphic to $V_{L^+}$, as a $V_L$-module, where $L^+=L\cup (L+\lambda_c)$. Note that $L^+$ is a self-dual integral lattice in the particular case at hand. (In fact, $L^+$ is the unique self-dual positive-definite integral lattice of rank $12$ with no vectors of length $1$, sometimes denoted $D_{12}^+$.) So the super vertex operator algebra structure on $V_{L^+}$, which uniquely extends the $V_L$-module structure according to the structure of the fusion algebra of $V_L$, furnishes the claimed super vertex operator algebra structure on $\vsn$, and $\vsn$ is self-dual.

By inspection we see that the canonical automorphism of $V_{L^+}$---arising from the super structure---coincides with that attached to the vector $\lambda_v\in\frac{1}{2}V_{L^+}$, since $e^{2 \pi i \lab \lambda_v,\lambda\rab}$ is $1$ or $-1$ according as $\lambda\in L^+$ belongs to $L$ or $L+\lambda_c$. This shows that the coset module $V_{L^++\lambda_v}=V_{L+\lambda_v}\oplus V_{L+\lambda_s}$ is the unique irreducible canonically-twisted module for $V_{L^+}$, so $\vsnt$ is the unique irreducible canonically-twisted module for $\vsn$, according to (\ref{eqn:moon:cnstn-bfcorr}). This completes the proof.
\end{proof}

We now equip $\vsn$ and $\vsnt$ with module structures for the Conway group, $\Co_0$. 

As detailed in \S\ref{sec:gps:spin}, the spin group $\Spin(\a)$ acts naturally on $A(\a)^j$ and $A(\a)_\tw^j$, so it acts naturally on $\vsn$ and $\vsnt$. This action respects the super vertex algebra and canonically-twisted module structures defined in Proposition \ref{prop:moon:cnstn-vatwmodstruc}, in the sense that (\ref{eqn:gps:spin-spinactsvops}) holds for $x\in\Spin(\a)$, $a,b\in \vsn$ and $c\in \vsnt$. 

Recalling the setup of \S\ref{sec:gps:conway} we now assume to be chosen an identification $\a=\LL\otimes_{\ZZ}\CC$, and write $\coa$ for the corresponding copy of $\Co_0=\Aut(\LL)$ in $\SO(\a)$, isomorphic to $\Co_0$. We take $\coh$ be the lift of $\coa$ to $\Spin(\a)$ (cf. Proposition \ref{prop:gps:conway-lift}), so that  
the restriction of the natural map $\Spin(\a)\to\SO(\a)$ defines an isomorphism $\coh\xrightarrow{\sim}\coa$. We write $g\mapsto\wh{g}$ for the inverse isomorphism, and in this way we obtain actions of the Conway group $\coh\simeq \Co_0$ on $\vsn$ and $\vsnt$. 

Note that the actions on $\vsn$ and $\vsnt$ depend upon the choice of polarization $\a=\a^-\oplus \a^+$. We assume, according to the convention established in \S\ref{sec:gps:conway}, that the polarization is chosen so that the associated lift $\zz\in\Spin(\a)$ of $-\Id_\a\in \SO(\a)$ is the non-trivial central element of $\coh$. 
\begin{gather}\label{eqn:moon:cnstn-zzinz}
	\zz\in Z(\coh)
\end{gather}
With this convention both $\vsn$ and $\vsnt$ are faithful modules for $\coh$. 

If not $\zz\in Z(\coh)$ then we would have $-\zz\in Z(\coh)$, and $-\zz$ acts trivially on both $A(\a)^0$ and $A(\a)_\tw^1$ (cf. \S\ref{sec:gps:spin}). Thus for $-\zz\in Z(\coh)$ the $\coh$-module structure on $\vsn$ would factor through to $\coh/Z(\coh)$, being a copy of the simple group $\Co_1$ (cf. \S\ref{sec:gps:conway}). 

Actually, such an action, not faithful for $\coh$, is useful for us, and arises naturally when we consider the spaces $\vfn$ and $\vfnt$, closely related to $\vsn$ and $\vsnt$, defined by setting
\begin{gather}\label{eqn:moon:cnstn-vupvvp}
	\vfn=A(\a)^0\oplus A(\a)_{\tw}^0,\quad
	\vfnt=A(\a)^1\oplus A(\a)_{\tw}^1.
\end{gather}
Making obvious changes to the proof of Proposition \ref{prop:moon:cnstn-vatwmodstruc}, (i.e. swapping $A(\a)^1_\tw$ with $A(\a)^0_\tw$, and $\lambda_c$ with $\lambda_s$, \&c.,) we obtain the following direct analogue of that result. 
\begin{proposition}\label{prop:moon:cnstn-vatwmodstrucp}
The $A(\a)^0$-module structure on $\vfn$ extends uniquely to a super vertex operator 
algebra structure on $\vfn$, and the $A(\a)^0$-module structure on $\vfnt$ extends uniquely to a canonically-twisted $\vfn$-module structure.
\end{proposition}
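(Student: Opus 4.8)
The plan is to repeat the proof of Proposition~\ref{prop:moon:cnstn-vatwmodstruc} with the single substitution of $\lambda_s$ for $\lambda_c$ (equivalently, of $A(\a)^0_\tw$ for $A(\a)^1_\tw$), and to check that the handful of lattice-arithmetic facts used there still hold after this substitution. First I would fix, exactly as before, the isomorphisms furnished by the boson--fermion correspondence: an isomorphism of vertex operator algebras $A(\a)^0\xrightarrow{\sim}V_L$ with $L$ a copy of the $D_{12}$ root lattice, together with the compatible module isomorphisms (\ref{eqn:moon:cnstn-bfcorr}). Under these, $\vfn=A(\a)^0\oplus A(\a)^0_\tw$ is identified with the $V_L$-module $V_L\oplus V_{L+\lambda_s}=V_{L^f}$, where $L^f:=L\cup(L+\lambda_s)$; similarly $\vfnt=A(\a)^1\oplus A(\a)^1_\tw$ is identified with $V_{L+\lambda_v}\oplus V_{L+\lambda_c}$.

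Next I would establish the lattice facts. Since $\langle\lambda_s,\lambda_s\rangle=3$ and $\langle\lambda_s,\mu\rangle\in\ZZ$ for all $\mu\in L$, the set $L^f$ is an integral lattice; the sign change on the first coordinate is an isometry of $\ZZ^{12}$ preserving $L=D_{12}$ and carrying $\lambda_s$ to $\lambda_c$, hence carrying $L^f$ isometrically onto $L^+$, so $L^f$ is again a copy of the self-dual lattice $D_{12}^+$. Because the norm form is even on $L$ and odd on $L+\lambda_s$, the lattice $L^f$ is \emph{odd}, with even sublattice $L$, so the lattice vertex algebra $V_{L^f}$ carries a canonical super vertex operator algebra structure with $V_{L^f}^{\bar{0}}=V_L$. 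Invoking, as in the proof of Proposition~\ref{prop:moon:cnstn-vatwmodstruc}, the classification of irreducible $V_L$-modules and the identification of the fusion algebra of $V_L$ with $\CC[L^*/L]$ (which is multiplicity-free), this lattice super vertex operator algebra structure is the unique extension of the given $V_L$-module structure on $V_{L^f}$, and self-duality of $V_{L^f}$ is inherited from that of $L^f$. Transporting back through (\ref{eqn:moon:cnstn-bfcorr}) yields the asserted super vertex operator algebra structure on $\vfn$, with uniqueness.

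For the twisted module I would argue as before: the canonical automorphism of $V_{L^f}$ coming from its super structure agrees with the automorphism attached to $\lambda_v\in\tfrac12 L^f$, because $e^{2\pi\ii\langle\lambda_v,\lambda\rangle}$ is $1$ for $\lambda\in L$ and $-1$ for $\lambda\in L+\lambda_s$ (here $\langle\lambda_v,\lambda_s\rangle=\tfrac12\notin\ZZ$). Hence $V_{L^f}$ has a unique irreducible canonically-twisted module, namely the coset module $V_{L^f+\lambda_v}=V_{L+\lambda_v}\oplus V_{L+\lambda_c}$, where I use $\lambda_v+\lambda_s\equiv\lambda_c\pmod L$ (indeed $\lambda_v+\lambda_s-\lambda_c=(2,0,\dots,0)\in L$). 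Under (\ref{eqn:moon:cnstn-bfcorr}) this is exactly $\vfnt$, and the multiplicity-one decomposition of $\vfnt$ over $A(\a)^0$ gives uniqueness of the canonically-twisted $\vfn$-module structure extending the $A(\a)^0$-module structure.

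I do not expect a genuine obstacle; the argument is bookkeeping modelled on Proposition~\ref{prop:moon:cnstn-vatwmodstruc}. The one point requiring attention is the parity/grading match: one should verify that the $\ZZ/2$-grading $V_{L^f}=V_L\oplus V_{L+\lambda_s}$ coincides with the grading $\vfn=A(\a)^0\oplus A(\a)^0_\tw$ inherited from $\Lo$-eigenvalues---which it does, since the $\Lo$-eigenvalues on $A(\a)^0_\tw$ lie in $\ZZ+\tfrac32$ (these states are odd, consistent with $\langle\lambda,\lambda\rangle$ being odd on $L+\lambda_s$) whereas those on $A(\a)^0$ lie in $\ZZ$---and, relatedly, that it is $L^f$ being odd that makes $\vfn$ a \emph{super} vertex operator algebra rather than an ordinary one. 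The small coset computations ($\langle\lambda_v,\lambda_s\rangle\notin\ZZ$ and $\lambda_v+\lambda_s\equiv\lambda_c\bmod L$) are immediate from (\ref{eqn:moon:cnstn-lambdax}).
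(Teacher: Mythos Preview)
Your proposal is correct and is precisely what the paper does: it states that Proposition~\ref{prop:moon:cnstn-vatwmodstrucp} follows by ``making obvious changes to the proof of Proposition~\ref{prop:moon:cnstn-vatwmodstruc}, (i.e.\ swapping $A(\a)^1_\tw$ with $A(\a)^0_\tw$, and $\lambda_c$ with $\lambda_s$, \&c.)'', and the lattice-arithmetic checks you spelled out (integrality of $L^f$, the isometry $L^f\simeq L^+$ via the first-coordinate sign change, oddness of $L^f$, the identification of the canonical automorphism with pairing against $\lambda_v$, and $\lambda_v+\lambda_s\equiv\lambda_c\pmod{L}$) are exactly the points implicit in that instruction.
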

Actually, $\vfn$ is isomorphic to $\vsn$ as a super vertex operator algebra, since the proof of Proposition \ref{prop:moon:cnstn-vatwmodstruc} shows that both are isomorphic to the lattice super vertex operator algebra attached to $D_{12}^+$, being the unique self-dual positive-definite integral lattice of rank $12$ with no vectors of length $1$.

The group $\coh$ acts naturally on $\vfn$ and $\vfnt$ via the natural actions of $\Spin(\a)$, and it is as $\coh$-modules that the difference between $\vsn$ and $\vfn$ manifests: the center of $\coh$ acts trivially on the latter, according to our convention (\ref{eqn:moon:cnstn-zzinz}).

To aid the reader in comparing the results here with those of \cite{Dun_VACo} we mention that the $N=1$ super vertex operator algebra $_{\CC}A^{f\natural}$ studied there is isomorphic to $\vsn\simeq \vfn$ as a super vertex operator algebra. The main results of \cite{Dun_VACo} include the statement that the full automorphism group (fixing the $N=1$ element) of $_{\CC}A^{f\natural}$ is a copy of the sporadic simple Conway group $\Co_1$. So as $\Co_0$-modules we have $_{\CC}A^{f\natural}\simeq\vfn$, but $_{\CC}A^{f\natural}\not\simeq \vsn$. A faithful $\Co_0$-module structure on the super vertex operator algebra underlying $_{\CC}A^{f\natural}$ is mentioned in Remark 4.12 of \cite{Dun_VACo}.

Following the method of \cite{MR1142494} we may describe the vertex operator algebra structure on $\vsn\simeq \vfn$ quite explicitly. For the sake of later applications we now present details for the realization $\vfn$. 

So, we seek to describe the vertex operator correspondence on $\vfn$ explicitly (cf. (\ref{eqn:va:funds-vopcorr})). According to the vertex algebra axioms we may regard this correspondence as a linear map $\vfn\otimes\vfn\to\vfn\ldp z\rdp$, denoted $a\otimes b\mapsto Y(a,z)b$, whose restriction to $A(\a)^0\otimes \vfn$ is already defined by the $A(\a)^0$-module structure on $\vfn=A(\a)^0\oplus A(\a)^0_\tw$. According to the fusion rules described in the proof of Proposition \ref{prop:moon:cnstn-vatwmodstruc}, we require to specify linear maps 
\begin{gather}
	A(\a)^0_\tw\otimes A(\a)^0\to A(\a)^0_\tw\ldp z\rdp,\label{eqn:moon:cnstn-vopsvuptu}\\
	A(\a)^0_\tw\otimes A(\a)^0_\tw\to A(\a)^0\ldp z\rdp.\label{eqn:moon:cnstn-vopsvuptt}	
\end{gather}
For (\ref{eqn:moon:cnstn-vopsvuptu}) we apply the fact that a vertex operator correspondence should satisfy skew symmetry (cf. \cite{MR1142494}) to conclude that 
\begin{gather}\label{eqn:moon:cnstn-defnvopsvuptu}
	Y(a,z)b=e^{zL(-1)}Y(b,-z)a
\end{gather}
for $a\in A(\a)^0_\tw$ and $b\in A(\a)^0$. The right-hand side of (\ref{eqn:moon:cnstn-defnvopsvuptu}) is already defined, by the $A(\a)^0$-module structure on $\vfn$, so we may regard it as defining the left-hand side. 

For (\ref{eqn:moon:cnstn-vopsvuptt}) we use the non-degenerate bilinear forms on $A(\a)$ and $A(\a)_\tw$ (cf. (\ref{eqn:va:cliff-invbilfrm}) and (\ref{eqn:va:cliff-invbilfrmtw2})), to define $Y(a,z)b$ for $a,b\in A(\a)^0_\tw$ by requiring that
\begin{gather}\label{eqn:moon:cnstn-defnvopsvuptt}
	\lab Y(a,z)b, c\rab
	=(-1)^n\lab e^{z^{-1}L(1)}b,Y(c,-z^{-1})e^{zL(1)}
	a\rab
	_\tw z^{1-2n}
\end{gather}
for all $c\in A(\a)^0$ when $a\in (A(\a)^0_\tw)_{n-1/2}$. (Cf. (\ref{eqn:va:funds-oppvops}).)

The identity (\ref{eqn:moon:cnstn-defnvopsvuptt}) ensures that the bilinear form on $\vfn$, obtained by restricting those on $A(\a)$ and $A(\a)_\tw$, is invariant (cf. (\ref{eqn:va:funds-invbilfrm})) for the given super vertex operator algebra structure. According to Theorem \ref{thm:va:funds-invbilfms}, an invariant bilinear form on $\vfn$ is uniquely determined, up to scale. Thus we arrive at the following result.
\begin{proposition}\label{prop:moon:cnstn-invbilfrm}
The super vertex operator algebra $\vfn$ admits a unique invariant bilinear form such that $\lab\vv,\vv\rab=1$. It coincides with the bilinear form obtained by restriction from those defined above for $A(\a)$ and $A(\a)_\tw$.
\end{proposition}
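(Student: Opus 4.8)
The plan is to deduce the existence and uniqueness of the invariant bilinear form on $\vfn$ from Theorem~\ref{thm:va:funds-invbilfms}, and then to identify this abstractly-defined form with the concrete restriction of the forms already built on $A(\a)$ and $A(\a)_\tw$. First I would compute $\vfn_0$ and the image $L(1)\vfn_1$. Since $\vfn=A(\a)^0\oplus A(\a)_\tw^0$, and the $L(0)$-eigenvalues on $A(\a)_\tw$ lie in $\ZZ+\frac{1}{16}\dim\a=\ZZ+\frac{24}{16}=\ZZ+\frac32$, the twisted summand contributes nothing in degrees $0$ and $1$; hence $\vfn_0=A(\a)^0_0=\CC\vv$ and $\vfn_1=A(\a)^0_1=\g$ (the degree-one piece of the Clifford module vertex algebra). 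One then checks $L(1)v=0$ for all $v\in A(\a)_1$, exactly as was done for $A(\a)$ itself in \S\ref{sec:va:cliff}; this is a short computation with the explicit form of $\omega$ and the Clifford relations. Therefore $\vfn_0/L(1)\vfn_1=\CC\vv$ is one-dimensional, and Theorem~\ref{thm:va:funds-invbilfms} gives a one-dimensional space of invariant bilinear forms on $\vfn$; normalizing by $\lab\vv,\vv\rab=1$ pins down a unique such form.

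Next I would verify that the restriction of the pair of forms $\lab\cdot\,,\cdot\rab$ on $A(\a)$ and $\lab\cdot\,,\cdot\rab_\tw$ on $A(\a)_\tw$ to $\vfn=A(\a)^0\oplus A(\a)^0_\tw$ is actually an invariant bilinear form for the super vertex operator algebra structure of Proposition~\ref{prop:moon:cnstn-vatwmodstrucp}. Here I would declare the two summands $A(\a)^0$ and $A(\a)^0_\tw$ mutually orthogonal (forced anyway, since invariance makes $\lab\vfn_m,\vfn_n\rab=0$ for $m\neq n$, and the two summands live in disjoint degree sets modulo $\ZZ$), and then check the invariance identity~\eqref{eqn:va:funds-invbilfrm} on the various combinations of arguments. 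On $A(\a)^0\otimes A(\a)^0$ and on $A(\a)^0\otimes A(\a)^0_\tw$ with the first vertex-operator argument in $A(\a)^0$, invariance is inherited from the known invariance of $\lab\cdot\,,\cdot\rab$ and $\lab\cdot\,,\cdot\rab_\tw$ as forms on $A(\a)$ and on the canonically-twisted module $A(\a)_\tw$. The genuinely new cases are those where the vertex-operator argument is taken from $A(\a)^0_\tw$: these are precisely the maps~\eqref{eqn:moon:cnstn-vopsvuptu} and~\eqref{eqn:moon:cnstn-vopsvuptt}. But~\eqref{eqn:moon:cnstn-defnvopsvuptt} was written down exactly so as to force invariance in case~\eqref{eqn:moon:cnstn-vopsvuptt}, and invariance in case~\eqref{eqn:moon:cnstn-vopsvuptu} then follows from the skew-symmetry formula~\eqref{eqn:moon:cnstn-defnvopsvuptu} together with the standard fact that the opposite-vertex-operator construction is compatible with $e^{zL(-1)}$. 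So the restricted form is invariant.

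Finally, since the space of invariant forms is one-dimensional and the restricted form takes the value $1$ on $\lab\vv,\vv\rab$ (because $\lab\vv,\vv\rab=1$ already on $A(\a)$), it must coincide with the unique normalized invariant form produced by Theorem~\ref{thm:va:funds-invbilfms}. This gives both assertions of Proposition~\ref{prop:moon:cnstn-invbilfrm} at once. I expect the main obstacle to be the bookkeeping in the invariance verification for the ``new'' cases --- keeping straight the sign $(-1)^n$, the powers of $z$, and the exchange of $a$ and $b$ between the $A(\a)$-form and the $A(\a)_\tw$-form in~\eqref{eqn:moon:cnstn-defnvopsvuptt} --- but this is really a matter of unwinding definitions rather than a conceptual difficulty; the conceptual content is entirely carried by the dimension count $\dim\bigl(\vfn_0/L(1)\vfn_1\bigr)=1$ and by the design of formulas~\eqref{eqn:moon:cnstn-defnvopsvuptu}--\eqref{eqn:moon:cnstn-defnvopsvuptt}.
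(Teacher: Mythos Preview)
Your proposal is correct and follows essentially the same approach as the paper: the paper's argument (given in the paragraph immediately preceding the proposition) simply notes that the defining identity~\eqref{eqn:moon:cnstn-defnvopsvuptt} forces invariance of the restricted form, and then invokes Theorem~\ref{thm:va:funds-invbilfms} for uniqueness up to scale. You have spelled out more of the details---in particular the computation of $\vfn_0/L(1)\vfn_1$ and the case-by-case invariance check---but the logical structure is identical.
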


In \cite{Dun_VACo} it is shown that the super vertex operator algebra $\vsn\simeq \vfn$ admits an $N=1$ structure, with automorphism group isomorphic to $\Co_1$. Our next result is a kind of converse to that, showing how to recover a $\Co_1$-invariant $N=1$ structure from an action by automorphisms of $\Co_1$ on $\vfn$.

\begin{proposition}\label{prop:moon:cnstn-cotons}
Suppose to be given a non-trivial map $\Co_1\to \Aut(\vfn)$. Then the resulting action of the simple Conway group $\Co_1$ on $\vfn$ fixes a unique one-dimensional subspace of $(\vfn)_{3/2}$. A suitably scaled vector in this subspace defines an $N=1$ structure on $\vfn$.
\end{proposition}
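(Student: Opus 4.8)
The plan is to identify the homogeneous space $(\vfn)_{3/2}$ as a module for $\Co_1$, observe that it contains the trivial representation with multiplicity exactly one---which is precisely the uniqueness statement---and then verify the Neveu--Schwarz relations directly for a suitably rescaled generator $\tau$ of the fixed line, exploiting that $(\vfn)_n$ is small as a $\Co_1$-module for $n\in\{0,1,\tfrac{3}{2},2\}$. First I would pin down $(\vfn)_{3/2}$: since $A(\a)^0$ is concentrated in integer degrees, $(\vfn)_{3/2}$ lies in the twisted summand $A(\a)_\tw^0$, and in fact equals its lowest graded piece $\bigwedge^{\rm even}(\a(0)^-)\vv_\tw$, a half-spin module for $\Spin(\a)\cong\Spin(24)$ of dimension $2^{11}$, on which $\zz$ acts trivially. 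The $A(\a)^0$-module structure on $A(\a)_\tw^0$, together with the compatibility (\ref{eqn:gps:spin-spinactsvops}) of the $\Spin(\a)$-action with vertex operators, forces the action on $(\vfn)_1$ of any non-trivial---hence, $\Co_1$ being simple, injective---homomorphism $\Co_1\to\Aut(\vfn)$ to be $\bigwedge^2$ of the (essentially unique) $24$-dimensional faithful representation of $\Co_0\cong 2.\Co_1$, and the action on $(\vfn)_{3/2}$ to be the restriction of a half-spin representation of $\Spin(24)$ along the corresponding lift of $\Co_0$. Appealing to the $\Co_0$-module structure of these low-degree subspaces (worked out in \cite{Dun_VACo}, and in any case recoverable from the character table of $\Co_1$), I would record: (i) the trivial representation occurs in $(\vfn)_{3/2}$ with multiplicity exactly one; (ii) it does not occur in $(\vfn)_1\cong\bigwedge^2\a$; and (iii) it occurs in $(\vfn)_2\cong(\a\otimes\a)\oplus\bigwedge^4\a$ with multiplicity exactly one, spanned by the Virasoro element $\omega$ (the summand $\bigwedge^4\a$ carrying no $\Co_0$-fixed vector). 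Statement (i) already yields the uniqueness assertion.

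Next, let $\tau$ span the one-dimensional fixed subspace $W\subset(\vfn)_{3/2}$. The invariant bilinear form of $\vfn$ (Proposition~\ref{prop:moon:cnstn-invbilfrm}) restricts to a non-degenerate, $\Co_1$-invariant, symmetric form on $(\vfn)_{3/2}$---symmetry holding since $a_{(2)}b=b_{(2)}a$ for $a,b\in(\vfn)_{3/2}$, by skew-symmetry of vertex operators. If $\lab\tau,\tau\rab$ vanished, then $\tau\in\tau^\perp$, so $\tau^\perp$ would be a $\Co_1$-submodule of codimension one with $W$ contained in the kernel of the projection onto the trivial quotient $(\vfn)_{3/2}/\tau^\perp\cong\CC$; by semisimplicity this would exhibit a second copy of the trivial representation, contradicting (i). Hence $\lab\tau,\tau\rab\ne0$, and I rescale so that $\lab\tau,\tau\rab=8$.

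It remains to check that $\tau$ is then an $N=1$ element. Write $G(n+1/2):=\tau_{(n+1)}$ and note $L(1)\tau=0$, as $(\vfn)_{1/2}=0$. For $j\ge0$ the element $\tau_{(j)}\tau$ lies in $(\vfn)_{2-j}$, hence vanishes for $j\ge3$, equals $\beta\vv$ for $j=2$, vanishes for $j=1$ by (ii), and equals $\alpha\omega$ for $j=0$ by (iii); pairing with $\vv$ and with $\omega$ and using invariance of the form (with $L(1)\tau=0$) gives $\beta=\lab\tau,\tau\rab=8$ and $\alpha=\tfrac{1}{4}\lab\tau,\tau\rab=2$, so in particular $\tfrac{1}{2}G(-1/2)\tau=\tfrac{1}{2}\tau_{(0)}\tau=\omega$. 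Setting $L(n):=\omega_{(n+1)}$, the Virasoro relations of central charge $12$ hold because $\vfn$ is a super vertex operator algebra of that central charge; the relation $[L(m),G(n+1/2)]=(\tfrac{m}{2}-n-\tfrac{1}{2})G(m+n+1/2)$ follows from the commutator formula for vertex operators, since $\omega_{(j)}\tau=L(j-1)\tau$ equals $T\tau$, $\tfrac{3}{2}\tau$, and $0$ for $j=0$, $j=1$, and $j\ge2$ respectively; and the super-commutator formula for $\{G(m+1/2),G(n-1/2)\}=\{\tau_{(m+1)},\tau_{(n)}\}$ collapses, using $\tau_{(1)}\tau=0$ and $\tau_{(j)}\tau=0$ for $j\ge3$, to $\alpha L(m+n)+\binom{m+1}{2}\beta\,\delta_{m+n,0}\Id$, which with $\alpha=2$, $\beta=8$ equals $2L(m+n)+(4(m+1/2)^2-1)\,\delta_{m+n,0}\Id$---precisely the defining relation (\ref{eqn:Gm+1/2Gn-1/2}) of the Neveu--Schwarz algebra, with $\mathbf{c}$ acting as $12\,\Id$. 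Thus the $L(m)$ and $G(n+1/2)$ generate a representation of the Neveu--Schwarz algebra, so $\tau$ is an $N=1$ element and defines the asserted $N=1$ structure on $\vfn$.

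I expect the main obstacle to be step (i): verifying that the half-spin representation of $\Spin(24)$, restricted to a lift of $\Co_0$, contains the trivial $\Co_1$-representation with multiplicity exactly one. This is the one point that genuinely uses detailed information about the Conway group---it is immediate from the explicit low-degree decompositions of $\vfn$ as in \cite{Dun_VACo}---while everything else is formal vertex-algebra bookkeeping together with semisimplicity of $\Co_1$-representations.
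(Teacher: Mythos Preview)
Your argument is correct and takes a somewhat different route from the paper's. Both rest on the decomposition $2048=1+276+1771$ of $\CM^0=(\vfn)_{3/2}$ as a $\Co_1$-module, but the paper establishes this constructively: it first cites Proposition~4.6 of \cite{Dun_VACo} for $\Aut(\vfn)\cong\Spin(\a)/\langle\zz\rangle$, argues that the preimage of $\Co_1$ in $\Spin(\a)$ is $\Co_0$ (not $2\times\Co_1$, since $\Co_1$ has no nontrivial representation of dimension $24$), then builds the fixed vector explicitly as $t\vv_\tw$ for $t$ the averaging idempotent over the lift $\wh{E}$ of a maximal $2$-elementary abelian subgroup coming from a Leech coordinate frame, and uses Golay-code combinatorics to show $t\vv_\tw$ is the unique $\wh{E}$-invariant in $\CM$. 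The $N=1$ property is then read off from the criterion of Proposition~4.3 of \cite{Dun_VACo} (namely $\langle e_C\tau,\tau\rangle=0$ for $|C|\in\{2,4\}$), which holds because the Golay code has minimum weight $8$. You instead cite the multiplicity facts (i)--(iii) directly and verify the Neveu--Schwarz relations by hand via the Borcherds commutator formula; your computations of $\alpha$, $\beta$, and the brackets are all correct, and this has the virtue of avoiding the black-box criterion from \cite{Dun_VACo}. The one place your argument is thinner is the reduction step: the claim that an arbitrary injective $\Co_1\hookrightarrow\Aut(\vfn)$ must act on $(\vfn)_1$ and $(\vfn)_{3/2}$ via the standard $\Spin(\a)$-restrictions does not follow from (\ref{eqn:gps:spin-spinactsvops}) alone---you need $\Aut(\vfn)=\Spin(\a)/\langle\zz\rangle$ and the exclusion of $2\times\Co_1$, both of which the paper handles explicitly and you should too.
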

\begin{proof}
The content of Proposition 4.6 in \cite{Dun_VACo} is that the full group of super vertex operator algebra automorphisms of $\vfn$ is $\Spin(\a)/\lab \zz\rab$. So a non-trivial action of $\Co_1$ on $\vfn$ by automorphisms realizes $\Co_1$ as a subgroup of $\Spin(\a)/\lab\zz\rab$. Write $\coh$ for the preimage of this copy of $\Co_1$ in $\Spin(\a)$. Then either $\coh\simeq 2\times \Co_1$, or $\coh$ is isomorphic to the Conway group $\Co_0$, since these are the only $2$-fold covers of $\Co_1$. In either case $\lab\zz\rab$ is the only normal subgroup of order $2$ in $\coh$, so $\coh$ has trivial intersection with the kernel of the natural map $\Spin(\a)\to \SO(\a)$. We conclude that $\coh\simeq 2\times\Co_1$ is impossible, for otherwise the map $\Spin(\a)\to \SO(\a)$ would furnish a non-trivial representation of $\Co_1$ on $\a$, and the minimal dimension of a non-trivial representation for $\Co_1$ is $276$ according to \cite{atlas}. So $\coh\simeq \Co_0$.

Write $\coa$ for the image of $\coh$ in $\SO(\a)$. Then $\coa$ is a copy of $\Co_0$ in $\SO(\a)$ and $\coh$ is a lift of $\coa$ to $\Spin(\a)$ such that $\zz\in Z(\coh)$. The group $\coa$ must preserve a copy $\LL$ of the Leech lattice in $\a$, so we are in the setup of \S\ref{sec:gps:conway}, and our notation $\coh$, $\coa$, \&c., is consistent with the conventions established there.

Now let $S=\{\lambda_i\}_{i\in\Omega}$ be a coordinate frame for $\LL\subset\a$ (cf. \S\ref{sec:gps:conway}), and let  $E=E_S$ be the subgroup of $\coa$ consisting of elements which act by sign changes on the $\lambda_i$. Let $\GG$ be the corresponding copy of the Golay code in $\mathcal{P}(\Omega)$. We will use $E$ to construct the desired $N=1$ element in $(A(\a)_\tw)_{3/2}^0$. 

As explained in \S\ref{sec:va:cliff} we may use the isomorphism $\a\xrightarrow{\sim}\a(0)$ to identify $\Cliff(\a)$ as a subalgebra of $\Cliff(\hat{\a}_\tw)$. In this way we may regard $A(\a)_\tw$ as a $\Cliff(\a)$-module, and we may identify $(A(\a)_\tw)_{3/2}=\CM$ as the $\Cliff(\a)$-submodule of $A(\a)_\tw$ generated by $\vv_\tw$.

Define an idempotent element $t\in \Cliff(\a)$ by setting 
\begin{gather}
t=\frac{1}{4096}\sum_{g\in E}\wh{g},
\end{gather} 
where $g\mapsto\wh{g}$ denotes the inverse of the natural isomorphism $\coh\to\coa$. Then $t$ is not in the subalgebra of $\Cliff(\a)<\Cliff(\hat\a_\tw)$ generated by $\a^+$, since the only idempotent in $B^+$ (cf. \S\ref{sec:va:cliff}) is ${\bf 1}$. So $t\vv_\tw\neq 0$, and 
\begin{gather}
(A(\a)_\tw)_{3/2}=\CM=\Cliff(\a)t\vv_\tw=\{xt\vv_\tw\mid x\in \Cliff(\a)\}
\end{gather}
since $\CM$ is an irreducible $\Cliff(\a)$-module.

Now choose an ordering on the index set $\Omega$, let $e_i=\frac{1}{\sqrt{8}}\lambda_i$ for $i\in\Omega$, and given a subset $C=\{i_1,\cdots,i_k\}\subset\Omega$, with $i_1<\cdots<i_k$, define an element $e_C\in \Cliff(\a)$ by setting $e_C=e_{i_1}e_{i_2}\cdots e_{i_k}$. Then, taking $e_{\emptyset}={\bf 1}$, the set $\{e_C\mid C\subset \Omega\}$ furnishes a vector space basis for $\Cliff(\aa)$, so $\CM$ is spanned by the vectors $e_Ct\vv_\tw$ for $C\subset\Omega$. Also, $e_Ce_D=\pm e_{C+D}$ (where the $+$ in the subscript on the right hand side denotes the symmetric difference operation on $\mathcal{P}(\Omega)$). 

Observe that $e_Ct=\pm e_Dt$ whenever $C$ and $D$ are equivalent modulo $\GG$, since in that case one of $e_{C+D}$ or $-e_{C+D}$ belongs to $\wh{E}=\{\wh{g}\mid g\in E\}$. So if $\mathcal{T}$ is a set of representatives for the cosets of $\GG$ in $\mathcal{P}(\Omega)$ then $\CM$ is spanned by the $e_Ct\vv_\tw$ for $C\in \mathcal{T}$.
Since the Golay code is self-dual, $\mathcal{T}$ has cardinality $2^{12}$, which is also the dimension of $\CM$, so the $e_Ct\vv_\tw$ for $C\in \mathcal{T}$ must in fact furnish a basis for $\CM$.

We claim that $t\vv_\tw$ is $\coh$-invariant. Certainly it is $\wh{E}$-invariant. Using the fact that the $e_Ct\vv_\tw$ for $C\in \mathcal{T}$ form a basis for $\CM$ we see that $t\vv_\tw$ is actually the only $\wh{E}$-invariant vector in $\CM$, because the space spanned by $e_Ct\vv_\tw$ is a one-dimensional representation of $\wh{E}$ with character given by $\chi(\wh{g})=(-1)^{\# (C\cap D)}$ in case $g=g(D)$. Since the Golay code is self-dual, we only have $\# (C\cap D)=0\pmod{2}$ for all $D\in \GG$ when $C+\GG=\GG$. 

Consider the action of $\coh$ on $\CM^0$. The central element of $\coh$ is $\zz$, which acts trivially on $\CM^0$, so $\CM^0$ is a direct sum of irreducible modules for $\coh/\lab \zz\rab\simeq \Co_1$. We have $\dim \CM^0=2048$ so only irreducible representations of $\Co_1$ with dimension not exceeding $2048$ can arise. According to \cite{atlas} there are exactly four irreducible representations, up to equivalence, that can appear, and they are each determined by their dimension: $1$, $276$, $299$ or $1771$. The equation $2048=276a+299b+1771c$ has no non-negative integer solutions, so there must be at least one non-zero $\coh$-fixed vector in $\CM^0$. Such a vector must also be fixed by $\wh{E}$, and we have seen that $t\vv_\tw$ is the only possibility, so $t\vv_\tw$ is fixed by $\coh$, as was claimed. 

It follows that the decomposition of $\CM^0$ into irreducible representations for $\Co_1$ is given by $2048=1+276+1771$, with the one dimensional representation spanned by $t\vv_\tw$. From this we may conclude that $t\vv_\tw$ is not isotropic with respect to the invariant bilinear form $\lab\cdot\,,\,\cdot\rab$ on $\vfn$. (Cf. Proposition \ref{prop:moon:cnstn-invbilfrm}.) For the restriction of $\lab\cdot\,,\,\cdot\rab$ to $\CM^0=(A(\a)_{\tw})_{3/2}$ is $\Spin(\a)$-invariant, and hence also $\coh$-invariant, and so the above decomposition implies that a $\coh$-invariant map $f:\CM^0\to \CC$ that vanishes on $t\vv_\tw$ vanishes everywhere. Take $f(v)=\lab v,t\vv_\tw\rab$ for $v\in \CM^0$ to conclude that if $t\vv_\tw$ is isotropic then $\lab v,t\vv_\tw\rab=0$ for all $v\in\CM^0$, but this contradicts the non-degeneracy of $\lab\cdot\,,\,\cdot\rab_\tw$ on $A(\a)_\tw$, which can be easily checked from the defining identities, (\ref{eqn:va:cliff-invbilfrmtw1}) and (\ref{eqn:va:cliff-invbilfrmtw2}). 

Now choose $\alpha\in \CC$ such that $\tau=\alpha t\vv_\tw$ satisfies $\lab\tau,\tau\rab=8$. Observe that $\lab e_C\tau,\tau\rab=0$ whenever $C\subset \{1,\ldots,24\}$ has cardinality two or four. We conclude from Proposition 4.3 of \cite{Dun_VACo} that $\tau$ is an $N=1$ element for $\vfn$. This completes the proof.
\end{proof}

\subsection{Characterization}\label{sec:moon:uniq}

In this short section we establish a characterization of the super vertex operator algebra structure on $\vsn$. This is a strengthening of the main theorem of \S5.1 in \cite{Dun_VACo}, for we arrive at the same conclusion without the hypothesis of an $N=1$ structure.

Recall that a super vertex operator algebra $V$ is said to be {\em $C_2$-cofinite} if the subspace $\{a_{(-2)}b\mid a,b\in V\}<V$ has finite codimension in $V$. Following \cite{MR1650625} we say that a super vertex operator algebra $V=(V,Y,\vv,\omega)$ is of {\em CFT type} if the $L(0)$-grading $V=\bigoplus_{n\in\frac{1}{2}\ZZ}V_n$ is bounded from below by $0$, and if $V_0$ is spanned by the vacuum vector $\vv$. Note that a super vertex operator algebra (in the sense of \S\ref{sec:va:funds}) that is $C_2$-cofinite and of CFT type, is {\em nice} in the sense of \cite{Hoehn2007}.

\begin{theorem}\label{thm:moon:uniq-uniq}
Let $V$ be a self-dual $C_2$-cofinite rational super vertex operator algebra of CFT type with central charge $12$ such that $L(0)v=\frac{1}{2}v$ for $v\in V$ implies $v=0$. Then $V$ is isomorphic to $\vsn$ as a super vertex operator algebra.
\end{theorem}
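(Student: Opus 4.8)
The plan is to reduce the problem to the even part $U:=V_{\bar{0}}$, which is an ordinary vertex operator algebra, to identify $U$ with the lattice vertex operator algebra $V_{D_{12}}$, and then to recover $V$ as a simple-current extension of $U$. Since $V$ is self-dual it is simple, so $U$ is a simple sub vertex operator algebra sharing the Virasoro element $\omega$; hence $U$ is of CFT type with central charge $12$, and because the parity involution $\theta$ has order two, the orbifold theory of Miyamoto and Carnahan--Miyamoto shows $U=V^{\theta}$ is again rational and $C_2$-cofinite. Moreover $V_{\bar{1}}\neq 0$, for otherwise $V=U$ would be a holomorphic vertex operator algebra of CFT type with central charge $12$, which is impossible (the central charge of such an algebra is a positive multiple of $8$); consequently $V_{\bar{1}}$ is an irreducible $U$-module, of lowest $L(0)$-weight at least $3/2$ by the hypothesis $V_{1/2}=0$. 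Thus $(U,V_{\bar{1}})$ exhibits $V$ as a $\ZZ/2$-graded simple-current extension of $U$, and using the canonically-twisted $V$-module $V_{\tw}$ (whose existence follows from rationality) and its decomposition $V_{\tw}=V_{\tw}^{(0)}\oplus V_{\tw}^{(1)}$ under the relevant lift of $\theta$, the theory of simple-current extensions shows that $U$, $V_{\bar{1}}$, $V_{\tw}^{(0)}$, $V_{\tw}^{(1)}$ are precisely the irreducible $U$-modules, all simple currents, with fusion group $(\ZZ/2)^2$.

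The heart of the argument is the identification $U\cong V_{D_{12}}$. The weight-one space $U_1=V_1$ is a reductive Lie algebra (Dong--Mason), and the vertex operators of its elements generate an affine sub vertex operator algebra $A$ whose central charge, with respect to its Sugawara conformal vector, is at most $12$. By Zhu's modular invariance theorem the graded (super)characters of $V$ and of $V_{\tw}$ lie in a small space of weight-zero modular forms; together with $\dim U_0=1$, $\dim V_{1/2}=0$ and $c=12$ this determines the character of $U$, and in particular forces $\dim U_1=276$. A Schellekens-type relation then makes $h^{\vee}_i/k_i$ equal the common value $(\dim U_1-12)/12=22$ for every simple ideal of $U_1$ (and forbids an abelian summand); since $\mathfrak{so}(24)$ is the only simple Lie algebra with dual Coxeter number $22$ realizable at level one, and $\dim\mathfrak{so}(24)=276$ fills out all of $U_1$, we get $U_1\cong\mathfrak{so}(24)$ at level one, so $A$ has central charge $\tfrac{276}{23}=12=c(U)$. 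Then the commutant of $A$ in $U$ has central charge $0$ and is trivial, so $U$ is generated by $U_1$; since $U$ has only integral $L(0)$-weights while the non-vacuum irreducible $L_{\widehat{\mathfrak{so}}(24)}(1,0)$-modules have lowest weights $\tfrac12$ and $\tfrac32$, we conclude $U=L_{\widehat{\mathfrak{so}}(24)}(1,0)\cong V_{D_{12}}$.

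Finally I would reassemble $V$. As an irreducible $V_{D_{12}}$-module, $V_{\bar{1}}$ is one of $V_{D_{12}+\lambda_v}$, $V_{D_{12}+\lambda_s}$, $V_{D_{12}+\lambda_c}$ with $\lambda_v,\lambda_s,\lambda_c$ as in (\ref{eqn:moon:cnstn-lambdax}); the first has lowest weight $\tfrac12$ and is excluded by hypothesis, and the other two, of lowest weight $\tfrac32$, are interchanged by an automorphism of $D_{12}$. Hence, up to isomorphism, $V$ has underlying $V_{D_{12}}$-module $V_{D_{12}}\oplus V_{D_{12}+\lambda_c}=V_{L^+}$ with $L^+=D_{12}^+$, and since the super vertex operator algebra structure extending this module structure is unique---by the fusion rules of $V_{D_{12}}$, exactly as in the proof of Proposition \ref{prop:moon:cnstn-vatwmodstruc}---we get $V\cong V_{L^+}\cong\vsn$. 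I expect the main obstacle to be the second paragraph: many rational $C_2$-cofinite vertex operator algebras of CFT type with $c=12$ exist a priori, and it is precisely the combination of modular invariance, reductivity of $U_1$, and the hypothesis $V_{1/2}=0$---the last of these being what rules out competitors such as $V_{E_8\oplus D_4}$, all of whose odd extensions acquire weight-$\tfrac12$ vectors---that makes the identification of $U$ possible.
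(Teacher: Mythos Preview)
Your strategy matches the paper's: show $U=V_{\bar 0}\cong V_{D_{12}}$, then pin down $V_{\bar 1}$ as one of the two spinor modules and invoke the uniqueness of the simple-current extension, exactly as in Proposition~\ref{prop:moon:cnstn-vatwmodstruc}. The final paragraph of your argument is essentially verbatim what the paper does.

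Where you diverge is in the machinery used to reach $U_1\cong\mathfrak{so}(24)$ at level~$1$. The paper first proves $L(1)V_1=0$ directly (via the invariant form and CFT type), concludes $V_{\bar 0}$ is strongly rational in the sense of Dong--Mason~\cite{MR2097833}, applies their reductivity theorem and rank bound ($\mathrm{rk}\,V_1\le c=12$), then quotes Proposition~5.14 of~\cite{Dun_VACo} to get $\dim V_1=276$ and semisimplicity, and finally cites the integrability result~\cite{MR2219226} to force type $D_{12}$ at level~$1$. You instead invoke Carnahan--Miyamoto to transfer rationality and $C_2$-cofiniteness to $U$, use Zhu-type modular invariance to determine the character (hence $\dim U_1=276$), and then appeal to a Schellekens-type constraint $h^\vee_i/k_i=(\dim U_1-c)/c=22$ to single out $D_{12}$. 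Your route is more in the spirit of H\"ohn's thesis~\cite{Hoehn2007} and the holomorphic classification literature; the paper's route is more self-contained in that it leans on the companion paper~\cite{Dun_VACo} for the dimension count. One caution: the Schellekens identity $h^\vee_i/k_i=\dim V_1/c-1$ is usually proved for holomorphic (ordinary) vertex operator algebras, and $U$ here has four irreducibles, so you should either run the argument for the self-dual super object $V$ (where $V_1=U_1$ since $V_{1/2}=0$) or cite the super analogue explicitly; the paper sidesteps this by using the rank bound plus integrability instead.
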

\begin{proof}
We first show that $V$ admits a unique invariant bilinear form. Since $V$ is self-dual, the contragredient module $V'$ is isomorphic to $V$, so there exists a $V$-module isomorphism $\phi:V\to V'$. As explained in \S\ref{sec:va:ibfs}, this determines a non-degenerate invariant bilinear form $\lab\cdot\,,\cdot\rab$ on $V$. Now let $a\in V_1$. We claim that $L(1)a=0$. In any case, $L(1)a\in V_0$, so $L(1)a=C\vv$ for some $C\in\CC$, since $V$ is of CFT type. Thus we have $Y^{\dag}(a,z)=(-1)Y(a,z^{-1})z^{-2}+(-C)\Id_V z^{-1}$. (Cf. (\ref{eqn:va:funds-oppvops}).) Applying axiom \ref{itm:va:funds-vaaxid} from the super vertex algebra definition in \S\ref{sec:va:funds} we see that $\lab Y(a,z)\vv,\vv\rab=0$, since $a_{(-n-1)}\vv\in V_{n+1}$, and $\lab V_m,V_n\rab=0$ unless $m=n$ (cf. \S\ref{sec:va:ibfs}). On the other hand $\lab Y(a,z)\vv,\vv\rab=\lab\vv,Y^{\dag}(a,z)\vv\rab$ by invariance, so 
\begin{gather}
	0=\lab\vv,Y^{\dag}(a,z)\vv\rab=
	(-1)\lab\vv,Y(a,z^{-1})\vv\rab z^{-2}+(-C)\lab\vv,\vv\rab z^{-1}.
\end{gather}
Now the coefficient of $z^{-1}$ in $Y(a,z^{-1})\vv z^{-2}$ is $a_{(0)}\vv$, which vanishes by another application of axiom \ref{itm:va:funds-vaaxid}. Since $\lab\cdot\,,\cdot\rab$ is non-degenerate, and $V_0$ is spanned by $\vv$, we must have $\lab\vv,\vv\rab\neq 0$. So we must have $C=0$. This verifies our claim that $L(1)V_1\subset\{0\}$. Now we apply Theorem \ref{thm:va:funds-invbilfms} to conclude that the invariant bilinear form $\lab\cdot\,,\cdot\rab$ just constructed is in fact the unique invariant bilinear form on $V$. 

Since $V_1$ is in the kernel of $L(1)$ we may conclude that the even sub vertex operator algebra $V_{\bar{0}}$ is {\em strongly rational} in the sense of \S3 of \cite{MR2097833}. This allows us to apply Theorem 1 of \cite{MR2097833}, which tells us that the Lie algebra structure on $V_1$, obtained by setting $[a,b]:=a_{(0)}b$ for $a,b\in V_1$, is reductive. The argument used to prove Theorem 2 of \cite{MR2097833}---see the proof of Theorem 5.12 in \cite{Dun_VACo} for details---then shows that the Lie rank of $V_1$ is bounded above by the central charge of $V$. Applying Proposition 5.14 in \cite{Dun_VACo} we conclude that $V_1$ is a semi-simple Lie algebra of dimension $276$, with Lie rank bounded above by $12$. 

At this point our argument has converged with that used to establish Theorem 5.15 in \cite{Dun_VACo}. Picking up at the second paragraph of the proof of Theorem 5.15 in \cite{Dun_VACo} we see that an application of the main result of \cite{MR2219226} shows that $V_1$ is the simple complex Lie algebra of type $D_{12}$, and that the vertex operators on $V$ equip $V$ with a module structure of level $1$ for the affine Lie algebra of type ${D}^{(1)}_{12}$. So $V_{\bar{0}}$ is isomorphic to the lattice vertex operator algebra attached to the $D_{12}$ lattice. Proceeding as in the proof of Proposition \ref{prop:moon:cnstn-vatwmodstruc} we see that $V$ itself is isomorphic to a lattice super vertex operator algebra, and the lattice must be that obtained by adjoining some $\lambda_x$ say, of (\ref{eqn:moon:cnstn-lambdax}), to $V$. Since $V_{1/2}=\{0\}$, either $\lambda_x=\lambda_s$ or $\lambda_x=\lambda_c$, but both choices define isomorphic lattices, and hence isomorphic super vertex operator algebras. This completes the proof.
\end{proof}

\subsection{Principal Moduli}\label{sec:moon:mtseries}

The spin group $\Spin(\a)$ acts naturally on $\vsn$ and $\vsnt$, respecting the super vertex algebra and canonically-twisted module structures, in the sense that (\ref{eqn:gps:spin-spinactsvops}) holds for $x\in\Spin(\a)$, $a,b\in \vsn$ and $c\in \vsnt$. In particular, the $\Spin(\a)$-actions preserve the gradings defined by $\Lo$. We may compute the associated graded traces explicitly, and will do so momentarily (cf. Lemma \ref{lem:moon:mtseries-gddtracefns}). 

Since $\vsn$ and $\vsnt$ are super spaces, it is natural to consider graded super traces. Recall that the {\em super trace} of a parity preserving operator $X$ on a super vector space $V=V_{\bar 0}\oplus V_{\bar 1}$ is defined by setting
\begin{gather} 
	\str_V X:=\tr_{V_{\bar 0}}X-\tr_{V_{\bar 1}}X.
\end{gather}
(By parity preserving we just mean $X(V_{\bar j})\subset V_{\bar j}$.)

Observe that the super space gradings on $A(\a)$ and $A(\a)_\tw$ are given by the eigenspace decompositions for $\zz$, so we have
\begin{gather}
	\str_{A(\a)}Xq^{L(0)-c/24}=\tr_{A(\a)}\zz X q^{L(0)-c/24},\\
	\str_{A(\a)_\tw}Xq^{L(0)-c/24}=\tr_{A(\a)_\tw}\zz X q^{L(0)-c/24},	
\end{gather}
for $X$ an operator on $A(\a)$, $A(\a)_\tw$, that commutes with $L(0)$ and $\zz$.

Given $g\in \SO(\a)$ define $\eta_g(\tau)$ by setting
\begin{gather}\label{eqn:moon:mtseries-defnetax}
	\eta_g(\tau):=q\prod_{i=1}^{24}\prod_{n>0}(1-\varepsilon_iq^n)
\end{gather}
where $q=e^{2\pi\ii \tau}$, and the $\varepsilon_i$ are the eigenvalues for the action of $g$ on $\a$. Given $x\in\Spin(\a)$ write $C_x$ for the super trace of $x$ (i.e. the ordinary trace of $\zz x$) as an operator on $\CM$.
\begin{gather}\label{eqn:moon:mtseries-defnCx}
	C_x:=\str_{\CM}x=\tr_{\CM}\zz x
\end{gather}

A simple calculation now reveals that the graded super traces of $x\in\Spin(\a)$ on $A(\a)$ and $A(\a)_\tw$ are given by
\begin{gather}
	\str_{A(\a)}xq^{\Lo-c/24}
	=\frac{\eta_{\overline{x}}(\tau/2)}{\eta_{\overline{x}}(\tau)}\label{eqn:moon:mtseries-trxA},\\
	\str_{A(\a)_\tw}xq^{\Lo-c/24}
	=C_x\eta_{\overline{x}}(\tau)\label{eqn:moon:mtseries-trxAtw},
\end{gather}
where $\overline{x}$ is a shorthand for $x(\cdot)$, being the image of $x$ in $\SO(\a)$. Note that $c=12$ since $\dim\a=24$. (Cf. (\ref{eqn:va:cm-vir}).) 

This leads us quickly to expressions for the graded super traces of an arbitrary $x\in \Spin(\a)$ on $\vsn$ and $\vsnt$, which we record in the following lemma.

\begin{lemma}\label{lem:moon:mtseries-gddtracefns}
For $x\in \Spin(\a)$ the graded super traces for the actions of $x$ on $\vsn$ and $\vsnt$ are given by 
\begin{gather}
	\str_{\vsn}xq^{\Lo-c/24}=
	\frac{1}{2}\left(
		\frac{\eta_{\overline{x}}(\tau/2)}{\eta_{\overline{x}}(\tau)}
		+\frac{\eta_{-\overline{x}}(\tau/2)}{\eta_{-\overline{x}}(\tau)}
		+
		C_x\eta_{\overline{x}}(\tau)
		-C_{\zz x}\eta_{-\overline{x}}(\tau)
		\right),\label{eqn:moon:mtseries-trxVU}
\\
	\str_{\vsnt}xq^{\Lo-c/24}=
	\frac{1}{2}\left(
		\frac{\eta_{\overline{x}}(\tau/2)}{\eta_{\overline{x}}(\tau)}
		-\frac{\eta_{-\overline{x}}(\tau/2)}{\eta_{-\overline{x}}(\tau)}
		+
		C_x\eta_{\overline{x}}(\tau)
		+C_{\zz x}\eta_{-\overline{x}}(\tau)
		\right).\label{eqn:moon:mtseries-trxVV}
\end{gather}
\end{lemma}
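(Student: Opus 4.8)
The plan is to derive both identities from the graded super trace formulas (\ref{eqn:moon:mtseries-trxA}) and (\ref{eqn:moon:mtseries-trxAtw}) on $A(\a)$ and $A(\a)_\tw$, together with their ordinary-trace analogues, using that the summands $A(\a)^j$ and $A(\a)^j_\tw$ appearing in (\ref{eqn:moon:cnstn-vuvv}) are the $(-1)^j$-eigenspaces of $\zz$, and that the $\ZZ/2$-grading on $\vsn$ and on $\vsnt$ coincides in each case with the $\zz$-eigenspace decomposition.

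First I would assemble, for $x\in\Spin(\a)$, the four graded traces $\tr_{A(\a)}x\,q^{\Lo-c/24}$, $\tr_{A(\a)}\zz x\,q^{\Lo-c/24}$, $\tr_{A(\a)_\tw}x\,q^{\Lo-c/24}$ and $\tr_{A(\a)_\tw}\zz x\,q^{\Lo-c/24}$. Two of these are immediate: since the super-space structures on $A(\a)$ and $A(\a)_\tw$ coincide with the $\zz$-eigenspace decompositions, one has $\tr_{A(\a)}\zz x\,q^{\Lo-c/24}=\str_{A(\a)}x\,q^{\Lo-c/24}=\eta_{\overline x}(\tau/2)/\eta_{\overline x}(\tau)$ and $\tr_{A(\a)_\tw}\zz x\,q^{\Lo-c/24}=\str_{A(\a)_\tw}x\,q^{\Lo-c/24}=C_x\,\eta_{\overline x}(\tau)$ by (\ref{eqn:moon:mtseries-trxA}) and (\ref{eqn:moon:mtseries-trxAtw}). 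For the other two I would apply those same formulas with $\zz x$ in place of $x$. Here $\zz^2$ lies in the kernel $\{\pm\mathbf{1}\}$ of the map $\Spin(\a)\to\SO(\a)$, which acts trivially on $A(\a)$; moreover $\zz^2$ acts trivially on $A(\a)_\tw$ as well, since $\dim\a=24$ is divisible by $4$ (so that $\zz$ has order two on $A(\a)_\tw$). Hence $\str_{A(\a)}(\zz x)\,q^{\Lo-c/24}=\tr_{A(\a)}\zz(\zz x)\,q^{\Lo-c/24}=\tr_{A(\a)}x\,q^{\Lo-c/24}$, and similarly on $A(\a)_\tw$; combining with $\overline{\zz x}=-\overline x$ and the definition (\ref{eqn:moon:mtseries-defnCx}) of $C_x$ this yields $\tr_{A(\a)}x\,q^{\Lo-c/24}=\eta_{-\overline x}(\tau/2)/\eta_{-\overline x}(\tau)$ and $\tr_{A(\a)_\tw}x\,q^{\Lo-c/24}=C_{\zz x}\,\eta_{-\overline x}(\tau)$. (Alternatively, these last two follow directly from the Fock realizations (\ref{eqn:va:cliff-Aisomwedge}) and (\ref{eqn:va:AtwwedgeCM}), using $\tr_{\CM}x=C_{\zz x}$.)

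Next, since $A(\a)^j$ (resp. $A(\a)^j_\tw$) is the $(-1)^j$-eigenspace of $\zz$, and $\zz$ commutes with both $\Lo$ and $x$, one has $\tr_{A(\a)^j}x\,q^{\Lo-c/24}=\tfrac12\bigl(\tr_{A(\a)}x\,q^{\Lo-c/24}+(-1)^j\,\tr_{A(\a)}\zz x\,q^{\Lo-c/24}\bigr)$ and the analogous identity for $A(\a)^j_\tw$. Finally I would insert the super-space decompositions --- $\vsn$ has even part $A(\a)^0$ and odd part $A(\a)^1_\tw$, while $\vsnt$ has even part $A(\a)^0_\tw$ and odd part $A(\a)^1$ --- into $\str_V=\tr_{V_{\bar0}}-\tr_{V_{\bar1}}$ and substitute the four traces obtained above; collecting terms then produces (\ref{eqn:moon:mtseries-trxVU}) and (\ref{eqn:moon:mtseries-trxVV}).

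Nothing in this argument is difficult; the only genuine hazard is sign bookkeeping, and the single point that really needs care is the identification of the $\ZZ/2$-grading on the canonically-twisted module $\vsnt$ --- its odd part is $A(\a)^1$, not $A(\a)^0_\tw$, so that, just as for $\vsn$, the $\ZZ/2$-grading agrees with the $\zz$-grading --- together with remembering to invoke $\dim\a\equiv0\pmod4$ (equivalently, that $\zz$ has order two on $A(\a)_\tw$) in the step that replaces $x$ by $\zz x$.
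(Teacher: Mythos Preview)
Your argument is correct and is exactly the computation the paper has in mind: the paper does not spell out a proof but simply says the formulas ``follow quickly'' from (\ref{eqn:moon:mtseries-trxA}), (\ref{eqn:moon:mtseries-trxAtw}) and the decomposition (\ref{eqn:moon:cnstn-vuvv}), and your derivation fills in precisely those details. Your identification of the super structure on $\vsnt$ (even part $A(\a)_\tw^0$, odd part $A(\a)^1$) and your use of $\zz^2$ acting trivially when $\dim\a\equiv 0\pmod 4$ are both correct and are the only points requiring any care.
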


The construction of \S\ref{sec:moon:cnstn} equips $\vsn$ and $\vsnt$ with actions by a group $\coh<\Spin(\a)$ isomorphic to Conway's group, $\Co_0$. We recall our convention (cf. \S\ref{sec:gps:conway}) that any polarization $\a=\a^-\oplus\a^+$ used to realize $\vsn$ and $\vsnt$ is chosen so that the associated lift $\zz$ of $-1$ (cf. \S\ref{sec:gps:spin}) belongs to $\coh$.

Lemma \ref{lem:moon:mtseries-gddtracefns} now attaches two holomorphic functions on the upper half plane to each conjugacy class $[g]\subset\Co_0$; namely, the super traces defined by (\ref{eqn:moon:mtseries-trxVU}) and (\ref{eqn:moon:mtseries-trxVV}).
\begin{gather}
	T^s_g(\tau):=\label{eqn:moon:mtseries-Tsg}	
	\str_{\vsn}\wh{g}q^{\Lo-c/24}
\\
	T^s_{g,\tw}(\tau):=\label{eqn:moon:mtseries-Tsgtw}
	\str_{\vsnt}\wh{g}q^{\Lo-c/24}
\end{gather}

These functions $T^s_g$ and $T^s_{g,\tw}$ are special. In order to demonstrate this we first recall the following result due to Conway--Norton and Queen.
\begin{theorem}
[\cite{MR554399,MR628715}]\label{thm:moon:mtseries-CNQ}
For any $g\in \Co_0$, regarded as a subgroup of $\SO(\a)$,
the function
\begin{gather}\label{eqn:moon:mtseries-tg}
	t_g(\tau):=\frac{\eta_g(\tau)}{\eta_g(2\tau)}
\end{gather}
is a principal modulus for a genus zero group $\Gamma_g<\SL_2(\RR)$ containing some $\Gamma_0(N)$.
\end{theorem}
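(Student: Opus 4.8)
The plan is to recognise $\eta_g$ as a product of Dedekind eta functions and then invoke the classical theory of modular functions on $\Gamma_0(N)$. First I would attach to $g$ its \emph{frame shape}: since $g$ acts on $\a\cong\LL\otimes_{\ZZ}\CC$ with characteristic polynomial a product of cyclotomic polynomials of total degree $24$, M\"obius inversion lets us write $\det(x\cdot\Id-g)=\prod_{k\geq 1}(x^k-1)^{a_k}$ for a unique integer vector $(a_k)$ with $\sum_k k\,a_k=24$. Then $\prod_{i=1}^{24}(1-\varepsilon_i t)=t^{24}\prod_k\big(t^{-k}(1-t^k)\big)^{a_k}=\prod_k(1-t^k)^{a_k}$, and substituting this into (\ref{eqn:moon:mtseries-defnetax}) and collecting the prefactor $q=\prod_k q^{k a_k/24}$ gives $\eta_g(\tau)=\prod_k\eta(k\tau)^{a_k}$. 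Hence $t_g(\tau)=\eta_g(\tau)/\eta_g(2\tau)=\prod_k\eta(k\tau)^{a_k}\eta(2k\tau)^{-a_k}$ is a quotient of eta functions whose exponents sum to zero, so $t_g$ has weight $0$; a count of $q$-powers shows $t_g$ has expansion $q^{-1}+O(1)$ in integral powers of $q$ only, so $t_g$ is periodic under $\tau\mapsto\tau+1$ with a simple pole at the cusp $\infty$.

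Second, I would produce the invariance group. Taking $N$ to be a suitable multiple of the integers $2k$ occurring in $t_g$, the Newman--Ligozat congruence criteria for eta quotients ($\sum_{\delta\mid N}\delta r_\delta\equiv 0$ and $\sum_{\delta\mid N}(N/\delta)r_\delta\equiv 0\pmod{24}$ together with the rational-square condition on $\prod_{\delta\mid N}\delta^{r_\delta}$), applied to the exponent vector $(r_\delta)$ of $t_g$, show that $t_g$ is invariant, with trivial character, under $\Gamma_0(N)$. This already gives a genus-zero candidate when $\Gamma_0(N)$ itself has genus zero; for the remaining classes one uses the arithmetic of the frame shape --- the observation of Conway--Norton that the frame shapes of elements of $\Co_0$ are \emph{balanced}, so that $a_k=a_{m/k}$ for an appropriate $m$ --- to check via the eta transformation law that $t_g$ is additionally fixed by a group of Atkin--Lehner involutions of $\Gamma_0(N)$. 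Adjoining these to $\Gamma_0(N)$ defines $\Gamma_g$, commensurable with $\SL_2(\ZZ)$ and containing $\Gamma_0(N)$.

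Third, I would determine the divisor of $t_g$. An eta quotient is holomorphic and non-vanishing on $\HH$, so only the cusps matter, and Ligozat's formula expresses the order of $\prod_{\delta\mid N}\eta(\delta\tau)^{r_\delta}$ at a cusp of $\Gamma_0(N)$ as an explicit rational expression in the $r_\delta$ and the relevant gcd terms. One checks, class by class, that $t_g$ is holomorphic at every cusp of $\Gamma_0(N)$ except those in the Atkin--Lehner orbit of $\infty$; since the involutions adjoined above merge that orbit into the single cusp $[\infty]$ of $X_g:=\Gamma_g\backslash\wh{\HH}$, the function $t_g$ is a weight-zero modular function for $\Gamma_g$ with exactly one pole, a simple pole at $[\infty]$. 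It therefore defines a morphism $X_g\to\wh{\CC}$ of degree one, which must be an isomorphism; hence $X_g$ has genus $0$, so $\Gamma_g$ has genus zero and $t_g$ is a principal modulus for it, with $\Gamma_0(N)\le\Gamma_g$ as the theorem demands.

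The main obstacle is the lack of a uniform argument: the balance of the frame shapes (step two) and the non-negativity of the cusp orders of $t_g$ away from $\infty$ (step three) must be verified case by case over the conjugacy classes of $\Co_0$, which is precisely the computational content of \cite{MR554399} and of Queen's thesis work \cite{MR628715}. This presupposes the explicit list of frame shapes of $\Co_0$, which one extracts from the conjugacy-class data of $\Co_0$ together with the signed-permutation action on a coordinate frame (constrained by the Golay code). A more satisfying proof would deduce the balance condition intrinsically from the self-duality of $\LL$ and then read off genus zero from the known genus formulas for the groups $\Gamma_0(n)+$ of Atkin--Lehner type; producing such an argument --- in particular reading the modulus $m$ and the relevant Atkin--Lehner involutions directly from the action of $g$ on $\LL$ --- is the crux.
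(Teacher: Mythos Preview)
The paper does not supply its own proof of this theorem: it is quoted as a result of Conway--Norton \cite{MR554399} and Queen \cite{MR628715}, with the explicit list of invariance groups attributed to Kondo \cite{MR780666}, and is then used as input to the proofs of Theorems \ref{thm:moon:mtseries-mainthm1} and \ref{thm:moon:mtseries-mainthm2}. So there is no in-paper argument to compare against; your sketch is in effect a reconstruction of what the cited references do, and at that level of description it is broadly accurate --- write $\eta_g$ as an eta product via the Frame shape, identify $t_g$ as a weight-zero eta quotient, locate a congruence subgroup in its invariance group, and verify class by class that on the appropriate quotient curve $t_g$ has a unique simple pole.

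One point in your second step is oversimplified and would fail if taken literally. It is not the case that every Frame shape arising from $\Co_0$ is balanced (for instance $3^9/1^3$ for class $3C$ is not), and accordingly the groups $\Gamma_g$ are not in general obtained simply by adjoining Atkin--Lehner involutions to a $\Gamma_0(N)$. A glance at Table~\ref{tab:mmdata-Tsg} shows that many $\Gamma_g$ are of the $n|h$ type, or are generated over $\Gamma_0(N)$ by products of Atkin--Lehner involutions with the half-translation $\up\tfrac12$ or with a lower-triangular matrix $n\dn$ (see the paper's description of $4|2-$ and $8|2+$ just before the tables). So the ``balanced $\Rightarrow$ Fricke/Atkin--Lehner symmetry'' heuristic covers some classes but not all; the actual determination of $\Gamma_g$ in \cite{MR554399,MR628715,MR780666} requires tracking these extra generators, and your divisor computation in step three must then be carried out on these slightly larger (or slightly different) groups rather than on $\Gamma_0(N)+$. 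Your final paragraph already concedes that the argument is ultimately case by case, which is correct; just be aware that the case analysis is more delicate than ``balanced Frame shape plus Atkin--Lehner'' suggests.
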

We recall that a subgroup $\Gamma<\SL_2(\RR)$ commensurable with $\SL_2(\ZZ)$ is called {\em genus zero} if the natural Riemann surface structure on the quotient $\Gamma\backslash\HH\cup\hat{\QQ}$ has genus zero, where $\hat{\QQ}=\QQ\cup\{\infty\}$. If $\Gamma$ has genus zero then the field of $\Gamma$-invariant meromorphic functions on $\HH$, with possible poles at the cusps $\Gamma\backslash\hat{\QQ}$, is a simple transcendental extension of $\CC$. A generator is called a {\em principal modulus} for $\Gamma$. (The term Hauptmodul is also commonly used for this.) 

Note that a complete description of the invariance groups of the $t_g$ appears in \cite{MR780666}. The reader may see also the tables in \S\ref{sec:mmdata}.

Comparing with (\ref{eqn:moon:mtseries-trxA}) we see that the principal moduli $t_g$ of Conway--Norton and Queen are recovered in a simple way from the action of $\coh$ on $A(\a)$. Namely, 
\begin{gather}\label{eqn:moon:mtseries-tgfroma}
	t_{g}(\tau)
	=
	\str_{A(\a)}\wh{g}q^{2\Lo-c/12}
	=
	\tr_{A(\a)}\zz\wh{g}q^{2\Lo-c/12}
\end{gather}
for $g\in\coh\simeq \Co_0$. So the trace functions obtained from the action of $\Co_0$ on $A(\a)$ are principal moduli, according to Theorem \ref{thm:moon:mtseries-CNQ}.

We will show that the functions $T^s_g$, defined in (\ref{eqn:moon:mtseries-trxVU}) by the action of $\Co_0$ on the super vertex operator algebra $\vsn$, are also principal moduli for all $g\in\coh\simeq \Co_0$, but are distinguished in that they also satisfy the normalization condition 
\begin{gather}\label{eqn:moon:mtseries-npm}
T^s_g(2\tau)=q^{-1}+O(q). 
\end{gather}
Note that this condition does not hold in general for $t_{g}$, for we have $t_g(\tau)=q^{-1}-\chi_g+O(q)$, where $\chi_g$ denotes the character of $\coa\simeq \Co_0$ defined by its action on $\a=\LL\otimes_{\ZZ}\CC$  (cf. (\ref{eqn:gps:conway-chig})). The discrete subgroups of $\SL_2(\RR)$ attached to $\Co_0$ via the $T^s_g$ are essentially the same as those arising from the $t_g$ of Conway--Norton and Queen: it will develop that $T^s_g(2\tau)=t_g(\tau)+\chi_g$ for all $g\in \Co_0$. (Cf. Theorem \ref{thm:moon:mtseries-mainthm1}.) 

For explicit computations with $T^s_g$ and $T^s_{g,\tw}$ the notion of Frame shape is useful. Since $\coa\simeq\Co_0$ is the automorphism group of an integral lattice in $\a$ (cf. \S\ref{sec:gps:conway}) the traces $\chi_g$ for $g\in \coa$ are all integers. So the characteristic polynomial for the action of $g\in \coa$ on $\a$ can be written in the form $\prod_{m>0}(1-x^m)^{k_m}$ for some non-negative integers $k_m$ (all but finitely many being zero). In this situation then we find $\eta_g(\tau)=\prod_{m>0}\eta(m\tau)^{k_m}$ upon comparing with (\ref{eqn:moon:mtseries-defnetax}). 

The formal product 
\begin{gather}\label{eqn:frameshape}
\pi_g:=\prod_{m>0}m^{k_m}
\end{gather}
is called the {\em Frame shape} of $g$. We may define $\eta_{\pi}(\tau)$ for an arbitrary formal product $\pi=\prod_{m>0}m^{k_m}$ (with all but finitely many $k_m$ equal to zero) by setting 
\begin{gather}	
	\eta_{\pi}(\tau):=\prod_{m>0}\eta(m\tau)^{k_m}. 
\end{gather}
Of course then $\eta_g=\eta_{\pi_g}$ for $g\in \coa$. 

Thus $T^s_g$ and $T^s_{g,\tw}$ can be expressed explicitly in terms of the data $\pi_{\pm g}$ and $C_{\wh{\pm g}}$. (Note that $\wh{-g}=\zz\wh{g}$.) This data is collected in the tables of \S\ref{sec:mmdata} for all $g\in \Co_0$. Note that the trace of $g$ as an operator on $\a$ can also be read off from the Frame shape of $g$, for if $\pi_g=\prod_{m>0}m^{k_m}$ then $\chi_g=k_1$.

The values $C_{\wh{g}}$ are determined up to a sign by the eigenvalues of $g$. Indeed, suppose $g\in \coa\simeq \Co_0$ and assume, as in the discussion immediately following Lemma \ref{lem:moon:mtseries-gddtracefns}, that $\a=\a^-\oplus \a^+$ is a polarization such that $\a^{\pm}$ is spanned by (isotropic) eigenvectors $a^{\pm}_i$ for $g$, constituting a pair of dual bases in the sense that $\lab a^-_i,a^+_j\rab=\delta_{i,j}$. Assume also, as usual, that the associated lift $\zz$ of $-\Id_\a$ belongs to our chosen copy $\coh$ of $\Co_0$ in $\Spin(\a)$. Write $\lambda^{\pm 1}_i$ for the eigenvalue of $g$ attached to $a^{\pm}_i$. Then, after choosing $\alpha_i\in 2\pi \QQ$ such that $\lambda_i^{\pm 1}=e^{\pm 2\alpha_i \ii}$, we see that the product $x=\prod_{i=1}^{12}e^{\alpha_i X_i}$ is a lift of $g$ to $\Spin(\a)$, where the $X_i\in\g<\Cliff(\a)$ are defined as in (\ref{eqn:gps:spin-constzz}). Setting $\nu_i=e^{\alpha_i \ii}$ we obtain that the trace of $x$ on $\CM$ is given by $\prod_{i=1}^{12}(\nu_i+\nu_i^{-1})$, or equivalently, by $\nu\prod_{i=1}^{12}(1+\lambda_i^{- 1})$ where $\nu=\prod_{i=1}^{12} \nu_i$ is one of the two square roots of $\prod_{i=1}^{12} \lambda_i$. 

We conclude from this that $C_{\wh{g}}$ is given by
\begin{gather}\label{eqn:moon:mtseries-Cgeig}
	C_{\wh{g}}=\nu\prod_{i=1}^{12}(1-\lambda_i^{-1})
\end{gather}
(cf. (\ref{eqn:moon:mtseries-defnCx})) where $\nu$ is one of the two square roots of $\prod_{i=1}^{12}\lambda_i$. In particular, $C_{\wh{g}}=0$ if and only if $g$ has a fixed point in $\a$.

Our proof that the $T^s_g$ are normalized principal moduli depends upon the following lemma.

\begin{lemma}\label{lem:moon:mtseries-etaid}
For $g\in G\simeq\Co_0$ we have 
\be\label{eqn:moon:mtseries-etaid}
2\chi_g
-\frac{\eta_{-g}(\tau/2)}{\eta_{-g}(\tau)}+\frac{\eta_g(\tau/2)}{\eta_g(\tau)}+C_{\wh{-g}}\eta_{-g}(\tau)-C_{\wh{g}}\eta_g(\tau)=0.
\ee
\end{lemma}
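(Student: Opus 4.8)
The plan is to read (\ref{eqn:moon:mtseries-etaid}) as the statement that the graded super trace of $\wh g$ on the canonically-twisted module $\vfnt$ (cf. (\ref{eqn:moon:cnstn-vupvvp})) equals the constant $-\chi_g$, and then to establish this constancy using the Ramond algebra action supplied by the $N=1$ structure of Proposition \ref{prop:moon:cnstn-cotons}.

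First I would record an eta-product formula for $\str_{\vfnt}\wh g\,q^{L(0)-c/24}$ by repeating the computation behind Lemma \ref{lem:moon:mtseries-gddtracefns} for the decomposition $\vfnt=A(\a)^1\oplus A(\a)_\tw^1$. The ``obvious changes'' to Proposition \ref{prop:moon:cnstn-vatwmodstruc} indicated before Proposition \ref{prop:moon:cnstn-vatwmodstrucp} show that the super structure on $\vfnt$ assigns $A(\a)_\tw^1$ even parity and $A(\a)^1$ odd parity. Projecting onto these parts by $\tfrac12(\Id\pm\zz)$ and feeding in (\ref{eqn:moon:mtseries-trxA}), (\ref{eqn:moon:mtseries-trxAtw}) — together with their ordinary-trace variants $\tr_{A(\a)}xq^{L(0)-c/24}=\eta_{-\overline x}(\tau/2)/\eta_{-\overline x}(\tau)$ and $\tr_{A(\a)_\tw}xq^{L(0)-c/24}=C_{\zz x}\,\eta_{-\overline x}(\tau)$, obtained from (\ref{eqn:moon:mtseries-trxA}), (\ref{eqn:moon:mtseries-trxAtw}) by replacing $x$ with $\zz x$ and using $\zz^2=\Id$, $\str_{A(\a)}\zz x=\tr_{A(\a)}x$, and $\overline{\zz x}=-\overline x$ — yields, for $g\in\coa\simeq\Co_0$ and $\wh{-g}=\zz\wh g$,
\[
\str_{\vfnt}\wh g\,q^{L(0)-c/24}
=\tfrac12\left(\frac{\eta_g(\tau/2)}{\eta_g(\tau)}-\frac{\eta_{-g}(\tau/2)}{\eta_{-g}(\tau)}-C_{\wh g}\,\eta_g(\tau)+C_{\wh{-g}}\,\eta_{-g}(\tau)\right).
\]
Since $c=12$, the identity (\ref{eqn:moon:mtseries-etaid}) is precisely the assertion that this super trace equals $-\chi_g$.

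To prove that, recall that $\zz$ acts trivially on both summands of $\vfn$, so the $\coh\simeq\Co_0$ action factors through the simple group $\Co_1$, and by Proposition \ref{prop:moon:cnstn-cotons} this nontrivial $\Co_1$-action fixes an $N=1$ element $\tau\in(\vfn)_{3/2}$. Thus $\vfnt$ carries a Ramond algebra representation with $G(0)=\tau_{(1/2),\tw}$, and $\wh g$ — fixing $\tau$ — commutes with $L(0)$ and with $G(0)$. On the degree-$n$ piece $(\vfnt)_n$ the relation (\ref{eqn:GmGn}) with $m=n=0$ gives $G(0)^2=(n-\tfrac{c}{24})\Id=(n-\tfrac12)\Id$, so for every $n\neq\tfrac12$ the odd operator $G(0)$ is invertible on $(\vfnt)_n$, interchanges its even and odd parts, and commutes with $\wh g$; hence $(\vfnt)_n$ contributes $0$ to the graded super trace. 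The smallest degree occurring in $\vfnt$ is $\tfrac12$, realised only on $(A(\a)^1)_{1/2}=\a(-\tfrac12)\vv$, which is an odd subspace of $\vfnt$ on which $\wh g$ acts the way $g$ acts on $\a$ (cf. (\ref{eqn:gps:spin-defnxa})). Therefore $\str_{\vfnt}\wh g\,q^{L(0)-c/24}=\str_{\a(-1/2)\vv}\wh g=-\tr_{\a}g=-\chi_g$, and comparing with the display above and clearing denominators gives (\ref{eqn:moon:mtseries-etaid}).

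The step most prone to error is the parity bookkeeping in the first part of the argument: one must correctly convert the ordinary traces on $A(\a)$, $A(\a)_\tw$ into the super traces recorded in (\ref{eqn:moon:mtseries-trxA}), (\ref{eqn:moon:mtseries-trxAtw}) and keep track of which summand of $\vfnt$ is even — though, reassuringly, reversing the parity convention merely multiplies the displayed identity by $-1$, so either way one lands on (\ref{eqn:moon:mtseries-etaid}). As a numerical check, taking $g=e$ specialises (\ref{eqn:moon:mtseries-etaid}) to
\[
q^{-1/2}\prod_{n>0}(1+q^{n-1/2})^{24}-q^{-1/2}\prod_{n>0}(1-q^{n-1/2})^{24}=48+4096\,q\prod_{n>0}(1+q^n)^{24},
\]
whose leading coefficients are easily verified. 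Everything else is the standard Ramond-sector cancellation together with results from \S\ref{sec:moon:cnstn}.
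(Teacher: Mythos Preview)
Your proof is correct and shares the paper's key idea---namely, to use the $\coh$-invariant $N=1$ element of Proposition \ref{prop:moon:cnstn-cotons} and the resulting Ramond operator $G(0)$ on $\vfnt$ to relate the two summands $A(\a)^1$ and $A(\a)^1_\tw$. The packaging, however, is genuinely tighter than the paper's. The paper rewrites (\ref{eqn:moon:mtseries-etaid}) as $\tr_{A(\a)^1}\wh g\,q^{L(0)-c/24}=\chi_g+\tr_{A(\a)^1_\tw}\wh g\,q^{L(0)-c/24}$, uses $G(0)$ to obtain \emph{injective} $\coh$-equivariant maps $(A(\a)^1)_n\to(A(\a)^1_\tw)_n$ for $n>0$, and then separately verifies that the two sides have equal dimensions by proving the $g=e$ case of (\ref{eqn:moon:mtseries-etaid}) directly---an identity in $\Delta(\tau)$ established via a Hecke-operator computation on the principal modulus $\Delta(2\tau)/\Delta(\tau)$ for $\Gamma_0(2)$.

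Your super-trace formulation bypasses that detour entirely. Once one knows that $G(0)$ is an odd operator on $(\vfnt)_n$ with $G(0)^2=(n-\tfrac12)\Id$, invertibility of $G(0)$ for $n\neq\tfrac12$ is immediate, and the familiar fact that an odd involution-up-to-scalar commuting with $\wh g$ forces $\str\,\wh g=0$ on that graded piece gives the vanishing without ever comparing dimensions or invoking the $g=e$ identity. What the paper's route buys is an independent proof of the $\Delta$-identity (\ref{eqn:moon:mtseries-ramdelid}); what yours buys is a shorter and more conceptual argument for the lemma itself. Your caution about the parity convention on $\vfnt$ is well placed but, as you note, harmless for the conclusion.
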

\begin{proof}
Rewrite the required identity (\ref{eqn:moon:mtseries-etaid}) in the form
\begin{gather}\label{eqn:moon:mtseries-traceid}
\frac{1}{2}\left(\frac{\eta_{-g}(\tau/2)}{\eta_{-g}(\tau)}-\frac{\eta_{g}(\tau/2)}{\eta_{g}(\tau)}\right)
=
\chi_g+
\frac{1}{2}\left(C_{\wh{-g}}\eta_{-g}(\tau)-C_{\wh{g}}\eta_{g}(\tau)\right).
\end{gather}
Then, noting the identities (\ref{eqn:moon:mtseries-trxA}) and (\ref{eqn:moon:mtseries-trxAtw}), we recognize the left-hand side of (\ref{eqn:moon:mtseries-traceid}) as $\tr_{A(\a)^1}\hat{g}q^{L(0)-c/24}$, and the right-hand side as $\chi_g+\tr_{A(\a)_\tw^1}\hat{g}q^{L(0)-c/24}$. We now modify slightly the notational convention (\ref{eqn:va:funds-degdec}) to write 
\begin{gather}
A(\a)^1=\bigoplus_{n\in\ZZ}(A(\a)^1)_n
\end{gather}
for the grading of $A(\a)^1$ arising from the action of $L(0)-\frac{1}{2}\Id$, and similarly for $A(\a)^1_\tw$. Observe that both gradings are concentrated in non-negative degrees, positive in the case of $A(\a)^1_\tw$. Also, $(A(\a)^1)_0$ is isomorphic to $\a$ as a $\coh$-module, by construction. So we require to show that $(A(\a)^1)_n\simeq (A(\a)^1_\tw)_n$ as $\coh$-modules, for each positive integer $n$.

Recall (cf. Proposition \ref{prop:moon:cnstn-vatwmodstrucp}) that $\vfnt=A(\a)^1\oplus A(\a)_\tw^1$ may be regarded as the unique canonically-twisted module for $\vfn=A(\a)^0\oplus A(\a)_\tw^0$. Recall also (cf. Proposition \ref{prop:moon:cnstn-cotons}) that there is a uniquely determined $\coh$-invariant $N=1$ element $\tau\in \vfn$. Then the Fourier components of the twisted vertex operator $Y_{\tw}(\tau,z):\vfnt\to\vfnt\ldp z\rdp$ define an action of the Ramond algebra (cf. \S\ref{sec:va:funds}) on $\vfnt$. The defining relations show that $G(0):=\tau_{(1/2)}$ commutes with $L(0)$, and therefore preserves the subspaces $(A(\a)^1)_n\oplus (A(\a)_\tw^1)_n<\vfnt$. According to the fusion rules described in the proof of Proposition \ref{prop:moon:cnstn-vatwmodstruc}, the restriction of $G(0)$ to $A(\a)^1$ must map to $A(\a)_\tw^1$ (and vice versa). Now $G(0)^2=L(0)-\frac{1}{2}\Id$, so $G(0)$ defines an injective map $(A(\a)^1)_n\to (A(\a)_\tw^1)_n$ for all positive integers $n$. Since $\tau$ is $\coh$-invariant, these maps $(A(\a)^1)_n\to (A(\a)_\tw^1)_n$ are embeddings of $\coh$-modules. 

The lemma follows then if we can verify that $(A(\a)^1)_n$ and $(A(\a)^1_\tw)_n$ have the same dimension, for all $n>0$. That is, we should verify the $g=e$ case of (\ref{eqn:moon:mtseries-traceid}), which is the identity
\begin{gather}\label{eqn:moon:mtseries-ramdelid}
\frac{1}{2}\left(\frac{\Delta(\tau)^2}{\Delta(2\tau)\Delta(\tau/2)}-\frac{\Delta(\tau/2)}{\Delta(\tau)}\right)
=24+2^{11}\frac{\Delta(2\tau)}{\Delta(\tau)},
\end{gather}   
where $\Delta(\tau)=\eta(\tau)^{24}$ is the Ramanujan Delta function. This can be checked in a number of ways. For example, $f(\tau)=\Delta(2\tau)/\Delta(\tau)$ is a $\Gamma_0(2)$-invariant function, so the same is true of 
\begin{gather}
	(T_2f)(\tau):=\frac{1}{2}\left( f\left(\frac{\tau}{2}\right)+f\left(\frac{\tau+1}{2}\right)\right).
\end{gather}
(Cf. \cite[\S IX.6]{MR1193029}.) Now $f$ is actually a principal modulus for $\Gamma_0(2)$, with a simple pole at the unique non-infinite cusp, so $T_2f$ has a pole of order at most $2$ at the non-infinite cusp of $\Gamma_0(2)$, and no other poles. So $T_2f$ is a polynomial in $f$, of degree at most $2$, i.e.
\begin{gather}\label{eqn:moon:mtseries-almostalmostramdelid}
	T_2f=af^2+bf+c
\end{gather}
for some $a$, $b$ and $c$. Inspecting the first four coefficients of $f$ we see that $a=2048=2^{11}$, $b=24$ and $c=0$. Observing that $f(\frac{\tau+1}{2})=-f(\tau)/f(\tau/2)$, we now obtain 
\begin{gather}\label{eqn:moon:mtseries-almostramdelid}
	\frac{1}{2}\left(f\left(\frac{\tau}{2}\right)-\frac{f(\tau)}{f(\frac{\tau}{2})}\right)=24f(\tau)+2^{11}f(\tau)^2
\end{gather}
from (\ref{eqn:moon:mtseries-almostalmostramdelid}), and (\ref{eqn:moon:mtseries-ramdelid}) follows upon division of (\ref{eqn:moon:mtseries-almostramdelid}) by $f(\tau)$. The proof of the lemma is complete.
\end{proof}

We now come to the main results of this paper.

\begin{theorem}\label{thm:moon:mtseries-mainthm1}
Let $g\in \Co_0$. Then $T^s_g$ is the normalized principal modulus for a genus zero subgroup of $\SL_2(\RR)$. 
\end{theorem}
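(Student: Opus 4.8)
The plan is to identify the graded super trace $T^s_g$ with a classical eta quotient and then quote the Conway--Norton--Queen theorem. First I would specialize Lemma~\ref{lem:moon:mtseries-gddtracefns}, equation~(\ref{eqn:moon:mtseries-trxVU}), to $x=\wh g$: since $\overline{x}=g$ and $\wh{-g}=\zz\wh g$ (so that $C_{\zz\wh g}=C_{\wh{-g}}$) this reads
\[
T^s_g(\tau)=\tfrac12\left(\frac{\eta_g(\tau/2)}{\eta_g(\tau)}+\frac{\eta_{-g}(\tau/2)}{\eta_{-g}(\tau)}+C_{\wh g}\eta_g(\tau)-C_{\wh{-g}}\eta_{-g}(\tau)\right).
\]
Lemma~\ref{lem:moon:mtseries-etaid}, rearranged, gives $C_{\wh g}\eta_g(\tau)-C_{\wh{-g}}\eta_{-g}(\tau)=2\chi_g+\frac{\eta_g(\tau/2)}{\eta_g(\tau)}-\frac{\eta_{-g}(\tau/2)}{\eta_{-g}(\tau)}$, and upon substituting this the two $\eta_{-g}$ contributions cancel, leaving
\[
T^s_g(\tau)=\frac{\eta_g(\tau/2)}{\eta_g(\tau)}+\chi_g=t_g(\tau/2)+\chi_g,
\]
with $t_g(\tau)=\eta_g(\tau)/\eta_g(2\tau)$ as in~(\ref{eqn:moon:mtseries-tg}); equivalently $T^s_g(2\tau)=t_g(\tau)+\chi_g$, the formula~(\ref{eqn:intro-defnTg}) anticipated in the introduction.

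Next I would invoke Theorem~\ref{thm:moon:mtseries-CNQ}: $t_g$ is a principal modulus for a genus zero group $\Gamma_g<\SL_2(\RR)$ containing some $\Gamma_0(N)$, hence commensurable with $\SL_2(\ZZ)$. Because $\Gamma_g$ contains the translation $\tau\mapsto\tau+1$ while $t_g(\tau)=q^{-1}+O(1)$ has period exactly one, $\Gamma_g$ has width one at the infinite cusp; and as a principal modulus with a single (simple) pole on the compact quotient, $t_g$ is holomorphic at every non-infinite cusp of $\Gamma_g$. Adding a constant preserves all of this, and expanding~(\ref{eqn:intro-defntg}) shows $t_g(\tau)=q^{-1}-\chi_g+O(q)$ (the constant term being $-\sum_i\varepsilon_i=-\chi_g$), so $T^s_g(2\tau)=t_g(\tau)+\chi_g=q^{-1}+O(q)$ meets the normalization~(\ref{eqn:intro-pmcrit}) and is the normalized principal modulus for the genus zero group $\Gamma_g$. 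Undoing the rescaling $\tau\mapsto2\tau$ conjugates $\Gamma_g$ to a group $\Gamma'_g<\SL_2(\RR)$, still commensurable with $\SL_2(\ZZ)$ and still of genus zero (conjugation by an element of $\mathrm{GL}^+_2(\RR)$ induces a biholomorphism of the quotient Riemann surfaces, hence preserves the genus), for which $T^s_g$ itself is the normalized principal modulus in the sense of~(\ref{eqn:moon:mtseries-npm}). This completes the argument.

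Given Lemma~\ref{lem:moon:mtseries-etaid} and Theorem~\ref{thm:moon:mtseries-CNQ}, no substantial obstacle remains; the crux is the first step, where one recognizes that~(\ref{eqn:moon:mtseries-etaid}) is exactly the identity needed to trade the canonically-twisted contributions $C_{\wh{\pm g}}\eta_{\pm g}$ against the untwisted quotients $\eta_{\pm g}(\tau/2)/\eta_{\pm g}(\tau)$, so that the graded super trace on $\vsn$ collapses to a single copy of $t_g$. The remaining points---that the two $\eta_{-g}$ contributions really cancel, the shape of the $q$-expansion, and the absence of poles at non-infinite cusps---are routine. The one genuinely delicate ingredient, Lemma~\ref{lem:moon:mtseries-etaid}, has already been established, its proof resting on the uniqueness of the $\Co_1$-invariant $N=1$ structure on $\vfn$ (Proposition~\ref{prop:moon:cnstn-cotons}) together with the Delta-function identity~(\ref{eqn:moon:mtseries-ramdelid}).
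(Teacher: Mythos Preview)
Your proof is correct and follows essentially the same approach as the paper's own proof: both use Lemma~\ref{lem:moon:mtseries-gddtracefns} to expand $T^s_g$, apply the eta identity of Lemma~\ref{lem:moon:mtseries-etaid} to collapse it to $t_g(\tau/2)+\chi_g$, and then invoke the Conway--Norton--Queen theorem (Theorem~\ref{thm:moon:mtseries-CNQ}). The paper packages the same computation in the shorthand $\tilde t_g,\tilde t_{g,\tw}$ and is somewhat terser about the width, cusp, and rescaling points you spell out, but the substance is identical.
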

\begin{theorem}\label{thm:moon:mtseries-mainthm2}
Let $g\in \Co_0$. Then $T^s_{g,\tw}$ is constant, with constant value $-\chi_g$, when $g$ has a fixed point in its action on the Leech lattice. If $g$ has no fixed points then $T^s_{g,\tw}$ is a principal modulus for a genus zero subgroup of $\SL_2(\RR)$.
\end{theorem}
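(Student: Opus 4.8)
The plan is to evaluate $T^s_{g,\tw}$ in closed form and then recognise the answer as a principal modulus. Starting from the formula (\ref{eqn:moon:mtseries-trxVV}) of Lemma \ref{lem:moon:mtseries-gddtracefns} applied to $x=\wh{g}$, and recalling that $\zz\wh{g}=\wh{-g}$ (so that $C_{\zz\wh{g}}=C_{\wh{-g}}$, while the image of $\wh{g}$ in $\SO(\a)$ is $g$), one obtains
\begin{gather*}
T^s_{g,\tw}(\tau)=\frac12\left(\frac{\eta_g(\tau/2)}{\eta_g(\tau)}-\frac{\eta_{-g}(\tau/2)}{\eta_{-g}(\tau)}+C_{\wh{g}}\,\eta_g(\tau)+C_{\wh{-g}}\,\eta_{-g}(\tau)\right).
\end{gather*}
The eta-product identity (\ref{eqn:moon:mtseries-etaid}) of Lemma \ref{lem:moon:mtseries-etaid} says exactly that $\tfrac{\eta_g(\tau/2)}{\eta_g(\tau)}-\tfrac{\eta_{-g}(\tau/2)}{\eta_{-g}(\tau)}+C_{\wh{-g}}\eta_{-g}(\tau)=C_{\wh{g}}\eta_g(\tau)-2\chi_g$, and substituting this into the bracket collapses the whole expression to $T^s_{g,\tw}(\tau)=C_{\wh{g}}\,\eta_g(\tau)-\chi_g$.

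From here the two cases of the statement separate. By the discussion following (\ref{eqn:moon:mtseries-Cgeig}), $C_{\wh{g}}=0$ precisely when $g$ has a fixed point in its action on $\a=\LL\otimes_{\ZZ}\CC$, which — since the fixed subspace of $g$ is defined over $\QQ$ — happens precisely when $g$ has a fixed point on the Leech lattice; in that case $T^s_{g,\tw}\equiv-\chi_g$ is constant, as required. When $g$ is fixed-point-free we have $C_{\wh{g}}\neq 0$, and since the property of being a principal modulus for a prescribed discrete group is preserved under post-composition with any M\"obius transformation — in particular under the affine map $x\mapsto C_{\wh{g}}x-\chi_g$ — it suffices to prove that $\eta_g$ itself is a principal modulus for a genus zero subgroup of $\SL_2(\RR)$.

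To establish this I would argue as follows. Write the Frame shape of $g$ as $\pi_g=\prod_{m>0}m^{k_m}$, so $\eta_g(\tau)=\prod_{m>0}\eta(m\tau)^{k_m}$ (cf.\ (\ref{eqn:frameshape})). The exponent sum $\sum_m k_m$ equals the multiplicity of the eigenvalue $1$ of $g$ on $\a$, hence vanishes under our hypothesis; thus $\eta_g$ has weight zero and is a modular function for $\Gamma_0(N_g)$, with $N_g=\operatorname{lcm}\{m: k_m\neq 0\}$, by the usual criteria for eta-quotients. Since $\eta$ is holomorphic and nowhere vanishing on $\HH$, the divisor of $\eta_g$ is supported on the cusps of $\Gamma_0(N_g)$; the expansion $\eta_g(\tau)=q+O(q^2)$ gives a simple zero at the infinite cusp, and the orders at the remaining cusps are read off from $\pi_g$ by the standard order-of-vanishing formula for eta-quotients. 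Running through the finitely many fixed-point-free conjugacy classes of $\Co_0$ — whose Frame shapes, constants $C_{\wh{g}}$, and associated groups are tabulated in \S\ref{sec:mmdata}, consistently with the classification of the invariance groups of the $t_g$ in \cite{MR780666} (cf.\ also \cite{MR554399,MR628715}) — one checks case by case that, after adjoining to $\Gamma_0(N_g)$ those Atkin--Lehner involutions which fix $\eta_g$, the function $\eta_g$ has exactly one pole, of order one, on the resulting quotient surface. A meromorphic function with a single simple pole induces an isomorphism of the underlying compact Riemann surface with $\PP^1(\CC)$, so that surface has genus zero and $\eta_g$ is a principal modulus for the corresponding group. (Unlike in Theorem \ref{thm:moon:mtseries-mainthm1}, no normalization of the form $q^{-1}+O(q)$ is available or asserted, precisely because $\eta_g$ vanishes, rather than having a pole, at the infinite cusp.)

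The main obstacle is exactly this last, case-by-case, step. In Theorem \ref{thm:moon:mtseries-mainthm1} the series $T^s_g$ simplifies on the nose to $t_g(\tau/2)+\chi_g$, so the genus zero conclusion is imported wholesale from the Conway--Norton--Queen Theorem \ref{thm:moon:mtseries-CNQ}; here $\eta_g$ is a modular function not among those covered by that theorem, and Lemma \ref{lem:moon:mtseries-etaid} does not pin it down by itself — when both $g$ and $-g$ are fixed-point-free it only relates $C_{\wh{g}}\eta_g$ to $C_{\wh{-g}}\eta_{-g}$ and the $t_{\pm g}$, without isolating $\eta_g$. Consequently there is no single structural shortcut, and the genus zero and degree one assertions have to be read off from the explicit cuspidal-divisor computation, organised by the conjugacy-class data of \S\ref{sec:mmdata}.
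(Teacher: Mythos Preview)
Your reduction of $T^s_{g,\tw}$ to $C_{\wh{g}}\eta_g(\tau)-\chi_g$ via Lemma~\ref{lem:moon:mtseries-etaid} is exactly the paper's computation (its (\ref{eqn:moon:mtseries-Tgtwtildetgtwchi})), and your treatment of the fixed-point case agrees verbatim. The divergence is in the fixed-point-free case. You propose a direct, class-by-class cuspidal divisor analysis of the eta-quotient $\eta_g$, adjoining Atkin--Lehner involutions until a single simple pole remains. The paper instead invokes a structural shortcut you declared unavailable: Conway--Norton already observed in \cite{MR554399} that for every fixed-point-free $g\in\Co_0$ the function $1/\eta_g$ is, up to an additive constant, the McKay--Thompson series of some element of the monster, and hence a principal modulus; applying a M\"obius transformation then gives the result for $C_{\wh{g}}\eta_g-\chi_g$ immediately. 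The corresponding monster classes are exactly those listed in the last column of Table~\ref{tab:mmdata-Tsgtw}. Your route is sound and more self-contained, but it re-derives from scratch what is already packaged in the monstrous moonshine literature; the paper's route is a one-line citation.
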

It is convenient to prove Theorems \ref{thm:moon:mtseries-mainthm1} and \ref{thm:moon:mtseries-mainthm2} together.
\begin{proof}[Proof of Theorems \ref{thm:moon:mtseries-mainthm1} and \ref{thm:moon:mtseries-mainthm2}.]
With $\coh$ a lift of $\coa\simeq\Co_0$ to $\Spin(\a)$ as before, define 
\begin{gather}
\tilde{t}_g(\tau):=t_g(\tau/2)=\str_{A(\a)}\wh{g}q^{L(0)-c/24}
\end{gather}
for $g\in \coa$ (cf. (\ref{eqn:moon:mtseries-tg}) and (\ref{eqn:moon:mtseries-tgfroma})), and define also the twisted analogues, 
\begin{gather}
	\tilde{t}_{g,\tw}(\tau):=\str_{A(\a)_\tw}\wh{g}q^{L(0)-c/24}. 
\end{gather}
Then $t_{g,\tw}(\tau)=C_{\wh{g}}\eta_{g}(\tau)$ according to (\ref{eqn:moon:mtseries-trxAtw}), and so $t_{g,\tw}$ vanishes identically if and only if $g$ has a fixed point for its action on $\a=\LL\otimes_{\ZZ}\CC$ according to (\ref{eqn:moon:mtseries-Cgeig}). 

Using (\ref{eqn:moon:mtseries-trxVU}) and (\ref{eqn:moon:mtseries-trxVV}) we may now write
\begin{gather}
	T^s_g=\frac{1}{2}\left(\tilde{t}_g+\tilde{t}_{-g}+\tilde{t}_{g,\tw}-\tilde{t}_{-g,\tw}\right),\label{eqn:moon:mtseries-Tgtildes}\\
	T^s_{g,\tw}=\frac{1}{2}\left(\tilde{t}_g-\tilde{t}_{-g}+\tilde{t}_{g,\tw}+\tilde{t}_{-g,\tw}\right),\label{eqn:moon:mtseries-Tgtwtildes}
\end{gather}
and the identity (\ref{eqn:moon:mtseries-etaid}) may be rewritten $\chi_g+\frac{1}{2}(\tilde{t}_{g}-\tilde{t}_{-g}-\tilde{t}_{g,\tw}+\tilde{t}_{-g,\tw})=0$. So applying (\ref{eqn:moon:mtseries-etaid}), to (\ref{eqn:moon:mtseries-Tgtildes}) and (\ref{eqn:moon:mtseries-Tgtwtildes}), we obtain 
\begin{gather}
	T^s_g=\tilde{t}_g+\chi_g,\label{eqn:moon:mtseries-Tgtildetgchi}\\
	T^s_{g,\tw}=\tilde{t}_{g,\tw}-\chi_g.\label{eqn:moon:mtseries-Tgtwtildetgtwchi}
\end{gather}
Since ${t}_g=q^{-1}-\chi_g+O(q)$ by inspection, (\ref{eqn:moon:mtseries-Tgtildetgchi}) verifies that $T^s_g(2\tau)=q^{-1}+O(q)$ for $g\in \coa$, and so $T^s_g(2\tau)$ is a normalised principal modulus according to Theorem \ref{thm:moon:mtseries-CNQ}. This proves Theorem \ref{thm:moon:mtseries-mainthm1}. 

The equation (\ref{eqn:moon:mtseries-Tgtwtildetgtwchi}) verifies that $T^s_{g,\tw}$ is constant, with constant value $-\chi_g$, when $g$ has a fixed point for its action on the Leech lattice according to the first paragraph of this proof, so it remains to understand $T^s_{g,\tw}$ in the case that $g$ has no fixed points. 

Observe that $C_{\wh{g}}/\tilde{t}_{g,\tw}=1/\eta_g$. If $g$ has no fixed points then the Frame shape $\pi_g=\prod_{m>0}m^{k_m}$ satisfies $\sum_{m>0}k_m=0$, so $1/\eta_g$ is a modular function for some congruence subgroup of $\SL_2(\ZZ)$. In fact, it has been verified in \cite{MR554399} that $1/\eta_g$ is, up to an additive constant, the McKay--Thompson series of an element of the monster group, for every such $g$ in the Conway group. So $T^s_{g,\tw}=-\chi_g+C_{\wh{g}}\eta_g$ is indeed a principal modulus for a genus zero subgroup of $\SL_2(\RR)$, whenever $g$ has no fixed points in $\LL$. The monster elements corresponding to elements $g\in \Co_0$ without fixed points may be read off from Table \ref{tab:mmdata-Tsgtw} in \S\ref{sec:mmdata}. This completes the proof of Theorem \ref{thm:moon:mtseries-mainthm2}.
\end{proof}

\section*{Acknowledgement}
We thank Miranda Cheng, Xi Dong, Igor Frenkel, Matthias Gaberdiel, Terry Gannon, Sarah Harrison, Jeff Harvey, Shamit Kachru and Timm Wrase for discussions on related topics. We also thank Jeff Harvey for correcting an error in an earlier draft.
The first author gratefully acknowledges support from the U.S. National Science Foundation (DMS 1203162).

\appendix


\clearpage

\section{Data}\label{sec:mmdata}

In Tables \ref{tab:mmdata-Tsg} and \ref{tab:mmdata-Tsgtw} we give all the data necessary for explicit computation of the McKay--Thompson series $T^s_g$ (cf. (\ref{eqn:moon:mtseries-Tsg})) and $T^s_{g,\tw}$ (cf. (\ref{eqn:moon:mtseries-Tsgtw})), attached to the Conway group $\Co_0$ in this paper.

In both tables, the first column lists the conjugacy class of the element $g\in\Co_0$ under consideration, and the next two columns list this element's associated $\Co_1$ class (cf. \S\ref{sec:gps:conway}) and Frame shape $\pi_g$ (cf. \eqref{eqn:frameshape}). These are followed by the super trace $C_{\widehat{-g}}$ of $\widehat{-g}$ on $\CM$ (in Table \ref{tab:mmdata-Tsg}) or the super trace $C_{\widehat{g}}$ of $\widehat{g}$ on $\CM$ (in Table \ref{tab:mmdata-Tsgtw}; cf. \eqref{eqn:moon:mtseries-defnCx}). In the fifth column we describe explicitly the invariance groups of the McKay--Thompson series, writing $\Gamma_g$ for the invariance group of $T^s_g$ (Table \ref{tab:mmdata-Tsg}), and $\Gamma_{g,\tw}$ for the invariance group of $T^s_{g,\tw}$ (Table \ref{tab:mmdata-Tsgtw}). The invariance groups in Table \ref{tab:mmdata-Tsgtw} are each associated to an element of the monster group by monstrous moonshine, and this is listed in the last column labelled $\MM$.

Note that a complete description of the groups $\Gamma_g$ first appeared in \cite{MR780666}, but our notation, in Table \ref{tab:mmdata-Tsg}, is different in certain cases, adhering more closely to the traditions initiated in \cite{MR554399}. More specifically, we follow the conventions of \cite{MR1291027}, so that $n|h-$, for example, when $h$ is the largest divisor of $24$ such that $h^2$ divides $nh$, denotes the subgroup of index $h$ in $\Gamma_0(n/h)$ defined in \cite{MR554399}. (See also \cite{Fer_Genus0prob} for a detailed analysis of the groups $n|h-$, and their extensions by Atkin--Lehner involutions.) So, for example, $12+3$ denotes the group obtained by adjoining an Atkin--Lehner involution $W_3=\frac{1}{\sqrt{3}}\left(\begin{smallmatrix} 3a&b\\12c&3d\end{smallmatrix}\right)$ to $\Gamma_0(12)$, where $9ad-12bc=3$.

Not all the groups $\Gamma_g$ appear in \cite{MR1291027}, so we need some additional notation. We use $\up\tfrac1h$ and $n\dn$ to denote upper and lower triangular matrices, respectively, 
\begin{gather}
\up\tfrac1h :=\left(\begin{matrix} 1&\tfrac1h\\0&1\end{matrix}\right),\quad
n\dn :=\left(\begin{matrix} 1&0\\n&1\end{matrix}\right).
\end{gather}

We then write $12+3\up\tfrac12$, for example, for the group generated by $\Gamma_0(12)$ and the product of $W_3$ with $\up\tfrac12$, where $W_3$ is an Atkin--Lehner involution for $\Gamma_0(12)$, as in the previous paragraph. Note that this group is also denoted $12+3'$ in \cite{MR1291027}. Now the group denoted $4|2-$ in \cite{MR554399,MR1291027} can be described as $8+\up\tfrac124\dn$, for it is generated by $\Gamma_0(8)$ together with the product of $\up\tfrac12$ and $4\dn$. For $8|2+$ we may write $16+16,\up\tfrac128\dn$, meaning the group generated by $\Gamma_0(16)$, the Fricke involution $\frac{1}{4}\left(\begin{smallmatrix} 0&-1\\16&0\end{smallmatrix}\right)$, and the product of $\up\tfrac12$ with $8\dn$.

Note that $\Gamma_g$ and $\Gamma_{-g}$ are related by conjugation by $\up\tfrac12$, for every $g\in \Co_0$, 
\begin{gather}
\Gamma_{-g}=\Gamma_g^{\up\tfrac12}=
\left(\begin{matrix} 1&-\tfrac12\\0&1\end{matrix}\right)
\Gamma_g
\left(\begin{matrix} 1&\tfrac12\\0&1\end{matrix}\right),
\end{gather}
since the Fourier expansions of the functions $T^s_g(2\tau)$ and $T^s_{-g}(2\tau)$ differ exactly by signs on even powers of $q$.

As mentioned in \S\ref{sec:moon:mtseries}, the invariance groups $\Gamma_{g,\tw}$, of the canonically-twisted McKay--Thompson series $T^s_{g,\tw}$, are all genus zero groups that arise in monstrous moonshine. We include the corresponding monstrous class names in Table \ref{tab:mmdata-Tsgtw}, where the $\Gamma_{g,\tw}$ are described explicitly.

\clearpage

\begin{center}
\begingroup
\renewcommand{\arraystretch}{1.5}
\begin{longtable}{cccrl}
\caption{Data for the $T^s_g$}\label{tab:mmdata-Tsg} \\

\multicolumn{1}{c}{$\Co_0$} & 
\multicolumn{1}{c}{$\Co_1$} & 
\multicolumn{1}{c}{$\pi_g$} & 
\multicolumn{1}{r}{$C_{\wh{-g}}$} & 
\multicolumn{1}{l}{$\Gamma_g$} \\ \hline\hline
\endfirsthead

\multicolumn{5}{c}%
{{\tablename\ \thetable{}, continued from previous page}} \\
\multicolumn{1}{c}{$\Co_0$} & 
\multicolumn{1}{c}{$\Co_1$} & 
\multicolumn{1}{c}{$\pi_g$} & 
\multicolumn{1}{r}{$C_{\wh{-g}}$} & 
\multicolumn{1}{l}{$\Gamma_g$} \\ \hline\hline
\endhead

\hline \multicolumn{5}{r}{{Continued on next page}} \\
\endfoot

\hline
\endlastfoot

1A & 1A & $1^{24}$ & 4096 & $2-$ \\
2A & 1A & $2^{24}/1^{24}$ & 0 & $4+$ \\
\hline
2B & 2A & $1^8 2^8$ & 0 & $4-$ \\
2C & 2A & $2^{16}/1^8$ & 0 & $4-$ \\
\hline
4A & 2B & $4^{12}/2^{12}$ & 64 & $8|2+$\\
\hline
2D & 2C & $2^{12}$ & 0 & $4|2-$ \\
\hline
3A & 3A & $3^{12}/1^{12}$ & 1 & $6+6$ \\
6A & 3A & $1^{12}6^{12}/2^{12}3^{12}$ & 729 & $(6+6)^{\up\tfrac12}$\\
\hline
3B & 3B & $1^63^6$ & 64 & $6+3$ \\
6B & 3B & $2^66^6/1^63^6$ & 0 & $12+$ \\
\hline
3C & 3C & $3^9/1^3$ & $-8$ & $6-$ \\
6C & 3C & $1^36^9/2^33^9$ & 0 & $12+4$ \\
\hline
3D & 3D & $3^8$ & 16 & $6|3$ \\
6D & 3D & $6^8/3^8$ & 0 & $12|3+$ \\
\hline
4B & 4A & $1^84^8/2^8$ & 256 & $(8+)^{\up\tfrac12}$ \\
4C & 4A & $4^8/1^8$ & 0 & $8+$ \\
\hline
4D & 4B & $4^8/2^4$ & 0 & $8-$ \\
\hline
4E & 4C & $1^4 2^2 4^4$ & 0 & $8-$ \\
4F & 4C & $2^6 4^4/1^4$ & 0 & $8-$ \\
\hline
4G & 4D & $2^44^4$ & 0 & $8|2-$ \\
\hline
8A & 4E & $8^6/4^6$ & 8 & $16| 4+$ \\
\hline
4H & 4F & $4^6$ & 0 & $8|4-$ \\
\hline
5A & 5A & $5^6/1^6$ & 1 & $10+10$ \\
10A & 5A & $1^610^6/2^65^6$ & 125 & $\left(10+10\right)^{\up\tfrac12}$ \\
\hline
5B & 5B & $1^45^4$ & 16 & $10+5$ \\
10B & 5B & $2^410^4/1^45^4$ & 0 & $20+$ \\
\hline
5C & 5C & $5^5/1^1$ & $-4$ & $10-$ \\
10C & 5C & $1^110^5/2^15^5$ & 0 & $20+4$ \\
\hline
6E & 6A & $3^46^4/1^42^4$ & 9 & $12+12$ \\
6F & 6A & $1^46^8/2^83^4$ & 81 & $(12+12)^{\up\tfrac12}$ \\
\hline
12A & 6B & $2^612^6/4^66^6$ & 1 & $(12\vert2+6)^{\up\tfrac14}$ \\
\hline
6G & 6C & $2^5 3^4 6^1 /1^4$ & 0 & $12+3\up\tfrac12$ \\
6H & 6C & $1^4 2^1 6^5/3^4$ & 0 & $12+3\up\tfrac12$ \\
\hline
6I & 6D & $1^53^16^4/2^4$ & 72 & $(12+12)^{\up\tfrac12}$ \\
6J & 6D & $2^16^5/1^53^1$ & 0 & $12+12$ \\
\hline
6K & 6E & $1^2 2^2 3^2 6^2$ & 0 & $12+3$ \\
6L & 6E & $2^4 6^4/1^2 3^2$ & 0 & $12+3$ \\
\hline
6M & 6F & $3^3 6^3/1^1 2^1$ & 0 & $12-$ \\
6N & 6F & $1^16^6/2^23^3$ & 0 & $12-$ \\
\hline
6O & 6G & $2^36^3$ & 0 & $12\vert2+3\up\tfrac12$ \\
\hline
12B & 6H & $12^4/6^4$ & 4 & $24\vert6+$ \\
\hline
6P & 6I & $6^4$ & 0 & $12|6-$ \\
\hline
7A & 7A & $7^4/1^4$ & 1 & $14+14$ \\
14A & 7A & $1^414^4/2^47^4$ & 49 & $\left(14+14\right)^{\up\tfrac12}$ \\
\hline
7B & 7B & $1^37^3$ & 8 & $14+7$ \\
14B & 7B & $2^314^3/1^37^3$ & 0 & $28+$ \\
\hline
8B & 8A & $8^4/2^4$ & 16 & $16\vert2+$ \\
\hline
8C & 8B & $2^48^4/4^4$ & 0 & $(16\vert2+)^{\up\tfrac14}$ \\
\hline
8D & 8C & $1^48^4/2^24^2$ & 32 & $(16+)^{\up\tfrac12}$ \\
8E & 8C & $2^28^4/1^44^2$ & 0 & $16+$ \\
\hline
8F & 8D & $8^4/4^2$ & 0 & $16-$ \\
\hline
8G & 8E & $1^2 2^1 4^1 8^2$ & 0 & $16-$ \\
8H & 8E & $2^34^1 8^2/1^2$ & 0 & $16-$ \\
\hline
8I & 8F & $4^28^2$ & 0 & $16\vert4-$ \\
\hline
9A & 9A & $9^3/1^3$ & 1 & $18+18$ \\
18A & 9A & $1^318^3/2^39^3$ & 27 & $\left(18+18\right)^{\up\tfrac12}$ \\
\hline
9B & 9B & $9^3/3^1$ & $-2$ & $18-$ \\
18B & 9B & $3^118^3/6^19^3$ & 0 & $36+4$ \\
\hline
9C & 9C & $1^39^3/3^2$ & 4 & $18+9$ \\
18C & 9C & $2^33^218^3/1^36^29^3$ & 0 & $36+$ \\
\hline
10D & 10A & $5^210^2/1^22^2$ & 5 & $20+20$ \\
10E & 10A & $1^210^4/2^45^2$ & 25 & $(20+20)^{\up\tfrac12}$ \\
\hline
20A & 10B & $2^320^3/4^310^3$ & $-1$ & $(20\vert2+10)^{\up\tfrac14}$ \\
\hline
20B & 10C & $4^220^2/2^210^2$ & 4 & $40\vert2+$ \\
\hline
10F & 10D & $2^3 5^2 10^1 /1^2$ & 0 & $20+5\up\tfrac12$ \\
10G & 10D & $1^2 2^1 10^3/5^2$ & 0 & $20+5\up\tfrac12$ \\
\hline
10H & 10E & $1^35^110^2/2^2$ & 20 & $(20+20)^{\up\tfrac12}$ \\
10I & 10E & $2^110^3/1^35^1$ & 0 & $20+20$ \\
\hline
10J & 10F & $2^210^2$ & 0 & $20\vert2+5$ \\
\hline
11A & 11A & $1^211^2$ & 4 & $22+11$ \\
22A & 11A & $2^222^2/1^211^2$ & 0 & $44+$ \\
\hline
12C & 12A & $2^43^412^4/1^44^46^4$ & 1 & $24+24,3\up\tfrac1212\dn$ \\
12D & 12A & $1^412^4/3^44^4$ & 9 & $24+8,3\up\tfrac1212\dn$ \\
\hline
12E & 12B & $2^212^4/4^46^2$ & $-3$ & $12-$ \\
\hline
12F & 12C & $6^212^2/2^24^2$ & 9 & $24\vert2+12$ \\
\hline
12G & 12D & $2^13^312^3/1^14^16^3$ & 4 & $(24+8)^{\up\tfrac12}$ \\
12H & 12D & $1^112^3/3^34^1$ & 0 & $24+8$ \\
\hline
12I & 12E & $1^23^24^212^2/2^26^2$ & 16 & $(24+)^{\up\tfrac12}$ \\
12J & 12E & $4^212^2/1^23^2$ & 0 & $24+$ \\
\hline
24A & 12F & $4^324^3/8^312^3$ & $-1$ & $(24\vert4+6)^{\up\tfrac18}$ \\
\hline
12K & 12G & $4^212^2/2^16^1$ & 0 & $24+3\up\tfrac12$ \\
\hline
12L & 12H & $1^1 2^2 3^1 12^2/4^2$ & 0 & $(24\vert2+12)^{\up\tfrac14}$ \\
12M & 12H & $2^36^112^2/1^13^14^2$ & 0 & $(24\vert2+12)^{\up\tfrac14}$ \\
\hline
12N & 12I & $2^2 3^2 4^1 12^1/1^2$ & 0 & $24+3\up\tfrac12$ \\
12O & 12I & $1^2 4^1 6^2 12^1/3^2$ & 0 & $24+3\up\tfrac12$ \\
\hline
12P & 12J & $2^14^16^112^1$ & 0 & $24\vert2+3$ \\
\hline
12Q & 12K & $1^312^3/2^13^14^16^1$ & 12 & $(24+24)^{\up\tfrac12}$ \\
12R & 12K & $2^23^112^3/1^34^16^2$ & 0 & $24+24$ \\
\hline
24B & 12L & $24^2/12^2$ & 2 & $48\vert12+$ \\
\hline
12S & 12M & $12^2$ & 0 & $24|12-$\\
\hline
13A & 13A & $13^2/1^2$ & 1 & $26+26$ \\
26A & 13A & $1^226^2/2^213^2$ & 13 & $\left(26+26\right)^{\up\tfrac12}$ \\
\hline
28A & 14A & $2^228^2/4^214^2$ & 1 & $(28\vert2+14)^{\up\tfrac14}$ \\
\hline
14C & 14B & $1^1 2^1 7^1 14^1$ & 0 & $28+7$ \\
14D & 14B & $2^214^2/1^17^1$ & 0 & $28+7$ \\
\hline
15A & 15A & $1^315^3/3^35^3$ & 1 & $30+6,10,15$ \\
30A & 15A & $2^33^35^330^3/1^36^310^315^3$ & $-1$ & $\left(30+6,10,15\right)^{\up\tfrac12}$ \\
\hline
15B & 15B & $3^215^2/1^25^2$ & 1 & $30+5,6,30$ \\
30B & 15B & $1^25^26^230^2/2^23^210^215^2$ & 9 & $\left(30+5,6,30\right)^{\up\tfrac12}$ \\
\hline
15C & 15C & $15^2/3^2$ & 1 & $30\vert3+10$ \\
30C & 15C & $3^230^2/6^215^2$ & 5 & $(30\vert3+10)^{\up\tfrac12}$ \\
\hline
15D & 15D & $1^13^15^115^1$ & 4 & $30+3,5,15$ \\
30D & 15D & $2^16^110^130^1/1^13^15^115^1$ & 0 & $60+$ \\
\hline
15E & 15E & $1^215^2/3^15^1$ & 2 & $30+15$ \\
30E & 15E & $2^23^15^130^2/1^26^110^115^2$ & 0 & $\left(30+15\right)^{\up\tfrac12}$ \\
\hline
16A & 16A & $2^216^2/4^18^1$ & 0 & $(32\vert2 +)^{\up\tfrac14}$ \\
\hline
16B & 16B & $1^216^2/2^18^1$ & 8 & $(32+)^{\up\tfrac12}$ \\
16C & 16B & $2^116^2/1^28^1$ & 0 & $32+$ \\
\hline
18D & 18A & $9^118^1/1^12^1$ & 3 & $36+36$ \\
18E & 18A & $1^118^2/2^29^1$ & 9 & $(36+36)^{\up\tfrac12}$ \\
\hline
18F & 18B & $1^29^118^1/2^13^1$ & 6 & $(36+36)^{\up\tfrac12}$ \\
18G & 18B & $2^13^118^2/1^26^19^1$ & 0 & $36+36$ \\
\hline
18H & 18C & $2^29^118^1/1^16^1$ & 0 & $36+9\up\tfrac12$ \\
18I & 18C & $1^12^118^2/6^19^1$ & 0 & $36+9\up\tfrac12$ \\
\hline
20C & 20A & $2^25^220^2/1^24^210^2$ & 1 & $40+8,5\up\tfrac12 20\dn$ \\
20D & 20A & $1^220^2/4^25^2$ & 5 & $40+40,5\up\tfrac12 20\dn$ \\
\hline
20E & 20B & $4^120^1$ & 0 & $40\vert4+5\up\tfrac12$ \\
\hline
20F & 20C & $2^25^120^1/1^14^1$ & 0 & $(40\vert2+20)^{\up\tfrac14}$ \\
20G & 20C & $1^12^110^120^1/4^15^1$ & 0 & $(40\vert2+20)^{\up\tfrac14}$ \\
\hline
21A & 21A & $1^221^2/3^27^2$ & 1 & $42+6,14,21$ \\
42A & 21A & $2^23^27^242^2/1^26^214^221^2$ & 1 & $\left(42+6,14,21\right)^{\up\tfrac12}$ \\
\hline
21B & 21B & $7^121^1/1^13^1$ & 1 & $42+3,14,42$ \\
42B & 21B & $1^13^114^142^1/2^16^17^121^1$ & 7 & $\left(42+3,14,42\right)^{\up\tfrac12}$ \\
\hline
21C & 21C & $3^121^1$ & 2 & $42\vert3+7$ \\
42C & 21C & $6^142^1/3^121^1$ & 0 & $(42\vert3+7)^{\up\tfrac12}$ \\
\hline
22BC & 22A & $2^122^1$ & 0 & $44\vert2+11\up\tfrac12$ \\
\hline
23AB & 23AB & $1^123^1$ & 2 & $46+23$ \\
46AB & 23AB & $2^146^1/1^123^1$ & 0 & $92+$ \\
\hline
24C & 24A & $2^224^2/6^28^2$ & 1 & $96+32,96^{24\dn},\up\tfrac12 48\dn$ \\
\hline
24D & 24B & $2^13^24^124^2/1^26^18^212^1$ & $-1$ & $48+16,48^{\up\tfrac12}$ \\
24E & 24B & $1^24^16^124^2/2^13^28^212^1$ & 3 & $48+48,16^{\up\tfrac12}$ \\
\hline
24F & 24C & $8^124^1/2^16^1$ & 4 & $48\vert2+$ \\
\hline
24G & 24D & $12^124^1/4^18^1$ & 3 & $48\vert4+12$ \\
\hline
24H & 24E & $2^16^18^124^1/4^112^1$ & 0 & $(48\vert2+)^{\up\tfrac14}$ \\
\hline
24I & 24F & $2^13^14^124^1/1^18^1$ & 0 & $(48\vert4+12)^{\up\tfrac18}$ \\
24J & 24F & $1^14^16^124^1/3^18^1$ & 0 & $(48\vert4+12)^{\up\tfrac18}$ \\
\hline
52A & 26A & $2^152^1/4^126^1$ & $-1$ & $(52\vert2 + 26)^{\up\tfrac14}$ \\
\hline
28B & 28A & $1^14^17^128^1/2^114^1$ & 4 & $(56+)^{\up\tfrac12}$ \\
28C & 28A & $4^128^1/1^17^1$ & 0 & $56+$ \\
\hline
56AB & 28B & $4^156^1/8^128^1$ & 1 & $(56\vert4+14)^{\up\tfrac18}$ \\
\hline
30F & 30A & $1^12^115^130^1/3^15^16^110^1$ & $-1$ & $60+12,15,20$ \\
30G & 30A & $2^23^15^130^2/1^16^210^215^1$ & 1 & $(60+12,15,20)^{\up\tfrac12}$ \\
\hline
60A & 30B & $2^110^112^160^1/4^16^120^130^1$ & 1 & $(60\vert2+5,6,30)^{\up\tfrac14}$ \\
\hline
60B & 30C & $6^160^1/12^130^1$ & $-1$ & $(60\vert6+10)^{\up\tfrac{1}{12}}$ \\
\hline
30H & 30D & $1^16^110^115^1/3^15^1$ & 0 & $60+3\up\tfrac12,5\up\tfrac12,15$ \\
30I & 30D & $2^13^15^130^1/1^115^1$ & 0 & $60+3\up\tfrac12,5\up\tfrac12,15$ \\
\hline
30J & 30E & $2^13^15^130^1/6^110^1$ & 2 & $60+12,15,20$ \\
30K & 30E & $2^130^1/3^15^1$ & 0 & $60+12,15,20$ \\
\hline
33A & 33A & $3^133^1/1^111^1$ & 1 & $66+6,11,66$ \\
66A & 33A & $1^16^111^166^1/2^13^122^133^1$ & 3 & $\left(66+6,11,66\right)^{\up\tfrac12}$ \\
\hline
35A & 35A & $1^135^1/5^17^1$ & 1 & $70+10,14,35$ \\
70A & 35A & $2^15^17^170^1/1^110^114^135^1$ & $-1$ & $\left(70+10,14,35\right)^{\up\tfrac12}$ \\
\hline
36A & 36A & $2^19^136^1/1^14^118^1$ & 1 & $72+8,9\up\tfrac12 36\dn$ \\
36B & 36A & $1^136^1/4^19^1$ & 3 & $72+72,9\up\tfrac12 36\dn$ \\
\hline
39AB & 39AB & $1^139^1/3^113^1$ & 1 & $78+6,26,39$ \\
78AB & 39AB & $2^13^113^178^1/1^16^126^139^1$ & 1 & $\left(78+6,26,39\right)^{\up\tfrac12}$ \\
\hline
40AB & 40A & $2^140^1/8^110^1$ & 1 & $160+32,160^{40\dn},\up\tfrac12 40\dn$ \\
\hline
84A & 42A & $4^16^114^184^1/2^112^128^142^1$ & 1 & $(84\vert2+6,14,21)^{\up\tfrac14}$ \\
\hline
60C & 60A & $1^14^16^110^115^160^1/2^13^15^112^120^130^1$ & 1 & $120+15,24,3\up\tfrac12 60\dn$ \\
60D & 60A & $3^14^15^160^1/1^112^115^120^1$ & $-1$ & $120+15,120,3\up\tfrac12 60\dn$ \\

\end{longtable}
\end{center}
\endgroup

\newpage

\begin{center}
\begingroup
\renewcommand{\arraystretch}{1.5}
\begin{longtable}{cccrlc}
\caption{Data for the $T^s_{g,\mathrm{tw}}$}\label{tab:mmdata-Tsgtw}\\

\multicolumn{1}{c}{$\Co_0$} & 
\multicolumn{1}{c}{$\Co_1$} & 
\multicolumn{1}{c}{$\pi_g$} & 
\multicolumn{1}{r}{$C_{\wh{g}}$} & 
\multicolumn{1}{l}{$\Gamma_{g,\tw}$} & 
\multicolumn{1}{c}{$\MM$} \\ \hline \hline
\endfirsthead

\multicolumn{6}{c}%
{{\tablename\ \thetable{}, continued from previous page}} \\
\multicolumn{1}{c}{$\Co_0$} & 
\multicolumn{1}{c}{$\Co_1$} & 
\multicolumn{1}{c}{$\pi_g$} & 
\multicolumn{1}{r}{$C_{\wh{g}}$} & 
\multicolumn{1}{l}{$\Gamma_{g,\tw}$} & 
\multicolumn{1}{c}{$\MM$} \\ \hline\hline
\endhead

\hline \multicolumn{6}{r}{{Continued on next page}} \\
\endfoot

\hline
\endlastfoot

2A & 1A & $2^{24}/1^{24}$ & 4096 & $2-$ & 2B \\
\hline
4A & 2B & $4^{12}/2^{12}$ & 64 & $4\vert 2-$ & 4D \\
\hline
3A & 3A & $3^{12}/1^{12}$ & 729 & $3-$ & 3B \\
6A & 3A & $1^{12}6^{12}/2^{12}3^{12}$ & 1 & $6+6$ & 6B \\
\hline
6B & 3B & $2^66^6/1^63^6$ & 64 & $6+3$ & 6C \\
\hline
6C & 3C & $1^36^9/2^33^9$ & $-8$ & $6-$ & 6E \\
\hline
6D & 3D & $6^8/3^8$ & 16 & $6\vert 3-$ & 6F \\
\hline
4C & 4A & $4^8/1^8$ & 256 & $4-$ & 4C \\
\hline
8A & 4E & $8^6/4^6$ & 8 & $8\vert 4-$ & 8F \\
\hline
5A & 5A & $5^6/1^6$ & 125 & $5-$ & 5B \\
10A & 5A & $1^610^6/2^65^6$ & 1 & $10+10$ & 10D \\
\hline
10B & 5B & $2^410^4/1^45^4$ & 16 & $10+5$ & 10B \\
\hline
10C & 5C & $1^110^5/2^15^5$ & $-4$ & $10-$ & 10E \\
\hline
6E & 6A & $3^46^4/1^42^4$ & 81 & $6+2$ & 6D \\
6F & 6A & $1^46^8/2^83^4$ & 9 & $6-$ & 6E \\
\hline
12A & 6B & $2^612^6/4^66^6$ & 1 & $12\vert 2+6$ & 12F \\
\hline
6J & 6D & $2^16^5/1^53^1$ & 72 & $6-$ & 6E \\
\hline
12B & 6H & $12^4/6^4$ & 4 & $12\vert 6-$ & 12J \\
\hline
7A & 7A & $7^4/1^4$ & 49 & $7-$ & 7B \\
14A & 7A & $1^414^4/2^47^4$ & 1 & $14+14$ & 14C \\
\hline
14B & 7B & $2^314^3/1^37^3$ & 8 & $14+7$ & 14B \\
\hline
8B & 8A & $8^4/2^4$ & 16 & $8\vert 2-$ & 8D \\
\hline
8E & 8C & $2^28^4/1^44^2$ & 32 & $8-$ & 8E \\
\hline
9A & 9A & $9^3/1^3$ & 27 & $9-$ & 9B \\
18A & 9A & $1^318^3/2^39^3$ & 1 & $18+18$ & 18E \\
\hline
18B & 9B & $3^118^3/6^19^3$ & $-2$ & $18-$ & 18D \\
\hline
18C & 9C & $2^33^218^3/1^36^29^3$ & 4 & $18+9$ & 18C \\
\hline
10D & 10A & $5^210^2/1^22^2$ & 25 & $10+2$ & 10C \\
10E & 10A & $1^210^4/2^45^2$ & 5 & $10-$ & 10E \\
\hline
20A & 10B & $2^320^3/4^310^3$ & $-1$ & $20\vert 2+10$ & 20E \\
\hline
20B & 10C & $4^220^2/2^210^2$ & 4 & $20\vert 2+5$ & 20D \\
\hline
10I & 10E & $2^110^3/1^35^1$ & 20 & $10-$ & 10E \\
\hline
22A & 11A & $2^222^2/1^211^2$ & 4 & $22+11$ & 22B \\
\hline
12C & 12A & $2^43^412^4/1^44^46^4$ & 9 & $12+4$ & 12B \\
12D & 12A & $1^412^4/3^44^4$ & 1 & $12+12$ & 12H \\
\hline
12E & 12B & $2^212^4/4^46^2$ & $-3$ & $12-$ & 12I \\
\hline
12F & 12C & $6^212^2/2^24^2$ & 9 & $12\vert 2+2$ & 12G \\
\hline
12H & 12D & $1^112^3/3^34^1$ & 4 & $12-$ & 12I \\
\hline
12J & 12E & $4^212^2/1^23^2$ & 16 & $12+3$ & 12E \\
\hline
24A & 12F & $4^324^3/8^312^3$ & $-1$ & $24\vert 4+6$ & 24F \\
\hline
12R & 12K & $2^23^112^3/1^34^16^2$ & 12 & $12-$ & 12I \\
\hline
24B & 12L & $24^2/12^2$ & 2 & $24\vert 12-$ & 24J \\
\hline
13A & 13A & $13^2/1^2$ & 13 & $13-$ & 13B \\
26A & 13A & $1^226^2/2^213^2$ & 1 & $26+26$ & 26B \\
\hline
28A & 14A & $2^228^2/4^214^2$ & 1 & $28\vert 2+14$ & 28D \\
\hline
15A & 15A & $1^315^3/3^35^3$ & $-1$ & $15+15$ & 15C \\
30A & 15A & $2^33^35^330^3/1^36^310^315^3$ & 1 & $30+6,10,15$ & 30A \\
\hline
15B & 15B & $3^215^2/1^25^2$ & 9 & $15+5$ & 15B \\
30B & 15B & $1^25^26^230^2/2^23^210^215^2$ & 1 & $30+5,6,30$ & 30D \\
\hline
15C & 15C & $15^2/3^2$ & 5 & $15\vert 3-$ & 15D \\
30C & 15C & $3^230^2/6^215^2$ & 1 & $30\vert 3+10$ & 30E \\
\hline
30D & 15D & $2^16^110^130^1/1^13^15^115^1$ & 4 & $30+3,5,15$ & 30C \\
\hline
30E & 15E & $2^23^15^130^2/1^26^110^115^2$ & 2 & $30+15$ & 30G \\
\hline
16C & 16B & $2^116^2/1^28^1$ & 8 & $16-$ & 16B \\
\hline
18D & 18A & $9^118^1/1^12^1$ & 9 & $18+2$ & 18A \\
18E & 18A & $1^118^2/2^29^1$ & 3 & $18-$ & 18D \\
\hline
18G & 18B & $2^13^118^2/1^26^19^1$ & 6 & $18-$ & 18D \\
\hline
20C & 20A & $2^25^220^2/1^24^210^2$ & 5 & $20+4$ & 20C \\
20D & 20A & $1^220^2/4^25^2$ & 1 & $20+20$ & 20F \\
\hline
21A & 21A & $1^221^2/3^27^2$ & 1 & $21+21$ & 21D \\
42A & 21A & $2^23^27^242^2/1^26^214^221^2$ & 1 & $42+6,14,21$ & 42B \\
\hline
21B & 21B & $7^121^1/1^13^1$ & 7 & $21+3$ & 21B \\
42B & 21B & $1^13^114^142^1/2^16^17^121^1$ & 1 & $42+3,14,42$ & 42D \\
\hline
42C & 21C & $6^142^1/3^121^1$ & 2 & $42\vert 3+7$ & 42C \\
\hline
46AB & 23AB & $2^146^1/1^123^1$ & 2 & $46+23$ & 46AB \\
\hline
24C & 24A & $2^224^2/6^28^2$ & 1 & $24\vert 2+12$ & 24H \\
\hline
24D & 24B & $2^13^24^124^2/1^26^18^212^1$ & 3 & $24+8$ & 24C \\
24E & 24B & $1^24^16^124^2/2^13^28^212^1$ & $-1$ & $24+24$ & 24I \\
\hline
24F & 24C & $8^124^1/2^16^1$ & 4 & $24\vert 2+3$ & 24D \\
\hline
24G & 24D & $12^124^1/4^18^1$ & 3 & $24\vert 4+2$ & 24G \\
\hline
52A & 26A & $2^152^1/4^126^1$ & $-1$ & $52\vert 2+26$ & 52B \\
\hline
28C & 28A & $4^128^1/1^17^1$ & 4 & $28+7$ & 28C \\
\hline
56AB & 28B & $4^156^1/8^128^1$ & 1 & $56\vert 4+14$ & 56BC \\
\hline
30F & 30A & $1^12^115^130^1/3^15^16^110^1$ & 1 & $30+2,15,30$ & 30F \\
30G & 30A & $2^23^15^130^2/1^16^210^215^1$ & $-1$ & $30+15$ & 30G \\
\hline
60A & 30B & $2^110^112^160^1/4^16^120^130^1$ & 1 & $60\vert 2+5,6,30$ & 60E \\
\hline
60B & 30C & $6^160^1/12^130^1$ & $-1$ & $60\vert 6+10$ & 60F \\
\hline
30K & 30E & $2^130^1/3^15^1$ & 2 & $30+15$ & 30G \\
\hline
33A & 33A & $3^133^1/1^111^1$ & 3 & $33+11$ & 33A \\
66A & 33A & $1^16^111^166^1/2^13^122^133^1$ & 1 & $66+6,11,66$ & 66B \\
\hline
35A & 35A & $1^135^1/5^17^1$ & $-1$ & $35+35$ & 35B \\
70A & 35A & $2^15^17^170^1/1^110^114^135^1$ & 1 & $70+10,14,35$ & 70B \\
\hline
36A & 36A & $2^19^136^1/1^14^118^1$ & 3 & $36+4$ & 36B \\
36B & 36A & $1^136^1/4^19^1$ & 1 & $36+36$ & 36D \\
\hline
39AB & 39AB & $1^139^1/3^113^1$ & 1 & $39+39$ & 39CD \\
78AB & 39AB & $2^13^113^178^1/1^16^126^139^1$ & 1 & $78+6,26,39$ & 78BC \\
\hline
40AB & 40A & $2^140^1/8^110^1$ & 1 & $40\vert 2+20$ & 40CD \\
\hline
84A & 42A & $4^16^114^184^1/2^112^128^142^1$ & 1 & $84\vert 2+6,14,21$ & 84B \\
\hline
60C & 60A & $1^14^16^110^115^160^1/2^13^15^112^120^130^1$ & $-1$ & $60+4,15,60$ & 60C \\
60D & 60A & $3^14^15^160^1/1^112^115^120^1$ & 1 & $60+12,15,20$ & 60D \\

\end{longtable}
\end{center}
\endgroup

\clearpage


\addcontentsline{toc}{section}{References}

\end{document}